\newcommand{\Z}{\mathbb{Z}}
\newcolumntype{R}{>{\raggedleft\arraybackslash}X}
\newcolumntype{L}{>{\raggedright\arraybackslash}X}
\newcommand{\arxiv}[1]{\href{http://arxiv.org/abs/#1}{\texttt{arXiv:#1}}}
\newtheorem{theorem}{Theorem}
\newtheorem{conjecture}[theorem]{Conjecture}
\title{ \bf The degree-diameter problem for circulant graphs of degree 10 and 11 - extended version}
\author{Robert R. Lewis\\[-0.8ex]
\small Department of Mathematics and Statistics\\[-0.8ex]
\small The Open University\\[-0.8ex]
\small Milton Keynes, UK\\
\small\tt{robert.lewis@open.ac.uk}
}
\date{17th March 2018\\
\small Mathematics Subject Classifications: 05C35}
\makeatletter\markright{R. R. Lewis}\makeatother 
\begin{document}
\maketitle

\begin{abstract}

This paper considers the degree-diameter problem for undirected circulant graphs. For degrees 10 and 11 newly discovered families of circulant graphs of arbitrary diameter are presented which are largest known and are conjectured to be extremal. They are also the largest-known Abelian Cayley graphs of these degrees. For each such family the order of every graph in the family is defined by a quintic polynomial function of the diameter which is specific to the family. The elements of the generating set for each graph are similarly defined by a set of polynomials in the diameter.  The existence of the graphs in the degree 10 families has been proved for all diameters. These graphs are consistent with a conjecture on the order of extremal Abelian Cayley and circulant graphs of any degree and diameter.

This is the extended version of the paper, including the proof steps for degree 10 graphs covering all diameter classes and an appendix listing additional tables of generating sets.

  \bigskip\noindent \textbf{Keywords:} degree-diameter; extremal; circulant graphs; Abelian Cayley graphs
\end{abstract}


\section{Introduction}

The \emph{degree-diameter problem} is the problem of finding graphs with the largest possible number of vertices $n(d,k)$ for a given maximal degree $d$ and diameter $k$. We will call such graphs \emph{extremal} graphs. From the literature it is seen that this problem has been tackled for undirected, directed and mixed graphs. In addition to the general case various subproblems have also been explored, including vertex-transitive graphs and Cayley graphs. For a general background on the degree-diameter problem see the comprehensive survey by Miller and \v{S}ir\'{a}\v{n} \cite{Miller} and the tables of largest-known graphs on the CombinatoricsWiki website \cite{Wiki}.

Only in relatively few cases are the largest-known graphs believed to be extremal, typically restricted to degree 3 for small diameter or diameter 2 for small degree. Circulant graphs, which are  Cayley graphs of cyclic groups and therefore highly structured, are a noteworthy exception.
In this paper $CC(d,k)$ denotes the order of an extremal undirected circulant graph of degree $d$ and diameter $k$, and $AC(d,k)$ similarly for Abelian Cayley graphs. $L_{CC}(d,k)$ is the order of the largest-known circulant graph when the extremal order is unknown. For even degree $d$, if $L_{CC}(d,k)$ is even we additionally define $L_{OC}(d,k)$ to be the order of the largest-known graph of odd order.

Infinite families of extremal undirected circulant graphs have been identified and proven extremal by various authors for degrees $d= 2, 3, 4$ and $5$, with order $CC(d,k)$ defined by a polynomial in the diameter $k$ for any diameter \cite{Dougherty}. Similar families of largest-known circulant graphs of order $L_{CC}(d,k)$ were discovered for degree 6 by Monakhova in 2003 \cite{Monakhova} and independently by Dougherty and Faber, also for degree 7, in 2004 \cite{Dougherty}.

It happens that $CC(2,k), CC(4,k)$ and $L_{CC}(6,k)$ are all odd for any diameter $k$. Families of largest-known degree 8 graphs of odd order $L_{OC}(8,k)$ were discovered by Monakhova in 2013 and conjectured to be extremal \cite{Monakhova1}. However Monakhova had limited her search to odd order graphs because of a 1994 paper by Muga in which he mistakenly claimed that any extremal circulant graph of even degree and arbitrary diameter has odd order \cite{Muga}. The argument was flawed and the smallest counterexample is the extremal graph of degree 8 and diameter 3, which has order 104. Families of largest-known graphs of degree 8 and 9 were discovered by the author in 2014 \cite{Lewis8}. For degree 8, these graphs have even order for any diameter $k\ge 3$.

These circulant graph families of degree 6 to 9 all remain largest known to date. They have been proven by computer search to be extremal for small diameters, and they are conjectured to be extremal for all larger diameters. For details of their order, generating sets and range of proven extremality see \cite{Lewis8}. The main result of this paper is the construction of infinite families of largest-known circulant graphs of degree 10 and 11, with order and generating sets defined by polynomials in the diameter, which are conjectured to be extremal.

A circulant graph is so called because its adjacency matrix is a circulant matrix.  As mentioned, a circulant graph $X$ of order $n$ may also be viewed as a Cayley graph whose vertices are the elements of the cyclic group $\Z_n$. Two vertices $i,j$ are connected by an arc $(i, j)$ if and only if $j-i$ is an element of $C$, a subset of $\Z_n \setminus 0$, called the \emph {connection set}. We may also denote the graph by $X(\Z_n,C)$. If $C$ is closed under additive inverses then $X$ is an undirected graph, and in this paper we will only consider undirected graphs. By definition such a graph is regular, with the degree $d$ of each vertex equal to the cardinality of $C$. If $n$ is odd then $\Z _n \setminus 0 $ has no elements of order $2$. Therefore $C$ has even cardinality, say $d=2f$, and comprises $f$ complementary pairs of elements with one of each pair strictly between $0$ and $n/2$. Any set of size $f$ containing exactly one element from each pair is sufficient to uniquely determine the connection set and is called a \emph {generating set}. Without loss of generality in this paper we will choose the generating set which is comprised of the $f$ elements of $C$ between 0 and $n/2$ as the canonical generating set $G$ for $X$. If $n$ is even then $\Z_n \setminus 0$ has just one element of order $2$, namely $n/2$. In this case $C$ comprises $f$ complementary pairs of elements, as for odd $n$, with or without the addition of the involutory element $n/2$. If $C$ has odd cardinality, so that $d=2f+1$, then the value of its involutory element is defined by the value of $n$. Therefore for a circulant graph of given order and degree, its connection set $C$ is completely defined by specifying its generating set $G$. The cardinality of the connection set is equal to the degree $d$ of the graph, and the cardinality of the generating set, $f$, is defined to be the dimension of the graph.

Clearly if every element of a generating set is multiplied by a constant factor that is co-prime with the order of the graph then the resultant set will also be a generating set of a graph which is isomorphic to the first. Therefore the isomorphism class of a circulant graph could have a number of different generating sets. Not all isomorphism classes of circulant graphs have a primitive generating set (where one of the generators is 1). An example of an extremal circulant graph with no primitive generating set is the graph with degree 9, diameter 2, order 42 and generating set \{2, 7, 8, 10\}. For simplicity, where a graph has at least one primitive generating set then we will only consider the primitive sets. In fact it emerges that every family of extremal or largest-known circulant graphs so far discovered has at least one primitive generating set.


\section{Conjectured order of extremal Abelian Cayley and circulant graphs of any degree and diameter}
\label{section:Conjectured}

We briefly review general upper and lower bounds for the order of extremal Abelian Cayley and circulant graphs of arbitrary degree $d$ and diameter $k$. For Abelian Cayley graphs, and thus in particular for circulant graphs, an upper bound that is much sharper than the general Moore bound was established for even degree by Wong and Coppersmith in 1974 \cite{Wong} and further sharpened by Muga in 1994 \cite{Muga}. Dougherty and Faber developed an equivalent upper bound for odd degree in 2004 \cite{Dougherty}.

For positive integers $f, k$, we define $S_{f,k}$ to be the set of elements of $\Z^f$ (the $f$-dimensional direct product of $\Z$ with itself) which can be expressed as a word of length at most $k$ in the canonical generators $\textbf{e}_i$ of $\Z^f$, taken positive or negative. Equivalently, $S_{f,k}$ is the set of points in $\Z^f$ distant at most $k$ from the origin under the $\ell^1$ (Manhattan) metric: $S_{f,k}=\{(x_1,...,x_f) \in \Z^f : \vert x_1 \vert +...+\vert x_f \vert \leq k\}$. Within the literature on coding theory and tiling problems $S_{f,k}$ is often called the $f$-dimensional Lee sphere of radius $k$, although it appears more diamond-like than spherical, having the form of a regular dual $f$-cube.

For an Abelian Cayley graph of degree $d$ and diameter $k$, and corresponding dimension $f=\lfloor d/2 \rfloor$ , Muga's and Dougherty and Faber's upper bounds are defined by:
\[M_{AC}(d,k) =
\begin{cases}
\ |S_{f,k}| &\mbox{ for even } d\\
\ |S_{f,k}|+|S_{f,k-1}| &\mbox { for odd } d,
\end{cases}
\]
where, by \cite{Stanton},
\[|S_{f,k}|= \sum _{i=0}^f 2^i \binom {f}{i} \binom {k}{i}.\]

For even or odd degree, this is a polynomial in $k$ of order $f$:
\[M_{AC}(d,k) =
\begin{dcases}
\ {\frac{2^f}{ f!}} k^f+{\frac{2^{f-1}}{(f-1)!)}} k^{f-1}+O(k^{f-2}) \mbox{\quad\ \ \ for even } d \\
\ \frac{2^{f+1}}{f!} k^f \qquad \qquad \qquad \; +O(k^{f-2}) \mbox{\quad\ \ \ for odd } d.\\
\end{dcases}
\]

A lower bound for the order of extremal circulant graphs of even degree $d$ and arbitrary diameter $k$ was established by Chen and Jia in 1993 \cite{Chen}:
\[
CJ(d,k) = \frac{1}{2}\left ( \frac{4}{f}\right ) ^f k^f+\frac{b}{2}\left ( \frac{4}{f}\right ) ^{f-1}k^{f-1}+O(k^{f-2})
\]
where $f=d/2$, $k \ge f \ge 3$, and $b \le 13-4f$ with equality if and only if $k \equiv f-3 \pmod f$.

$CJ(d,k)$ is also a polynomial in $k$ of order $f$ but with an asymptotically smaller leading coefficient than $M_{AC}(d,k)$. For even degree $d$, the ratio of the leading coefficients of $CJ(d,k)$ and $M_{AC}(d,k)$, denoted by $R_f$, is given by

\[
R_f = 2^{f-1}\frac{f!}{f^f} .
\]

We see that $R_1=R_2=1$, and then $R_f$ decreases monotonically to zero with increasing $f$.

More generally, Dougherty and Faber also established a lower bound for the order of extremal Abelian Cayley graphs of even degree $d$ and diameter $k \ge (d-2)/4$, with corresponding dimension $f=d/2$ \cite{Dougherty}:
\[
DF_{AC}(d,k) = \frac{1}{2}\left ( \frac{4}{f}\right ) ^f k^f+\left ( \frac{4}{f}\right ) ^{f-1}k^{f-1}+O(k^{f-2}).
\]

It is noteworthy that these have the same leading coefficients as the Chen and Jia circulant graph lower bound, although their second coefficients are greater.

From a comparison of the formulae for the order of the families of extremal and largest-known circulant graphs of degree 2 to 9, common relationships may be discerned between the first and second terms and between the degrees \cite{Lewis8}.
For even degrees the first and second terms are the same as Dougherty and Faber's Abelian Cayley graph lower bound, $DF_{AC}(d,k)$.
The author conjectured in \cite {Lewis8} that these relationships might extend to all higher degrees (both odd and even) and apply to families of extremal circulant graphs of any degree. It is further conjectured that this is true not only for circulant graphs but also for extremal Abelian Cayley graphs in general.
This leads to the following formal statement of the extremal Abelian Cayley and circulant graph order conjecture.

\begin{conjecture}
Let the order of an extremal Abelian Cayley graph of degree $d$ and diameter $k$ be $AC(d,k)$ and the order of an extremal circulant graph of degree $d$ and diameter $k$ be $CC(d,k)$. For any degree $d \ge 2$ and corresponding dimension $f=\lfloor d/2 \rfloor$ there exist polynomials $AC^*_{d, k'}(k)$ and $CC^*_{d,k'}(k)$ in $k$ of degree $f$ for $0 \le k' < f$ such that $AC(d,k)=AC^*_{d,k'}(k)$ and $CC(d,k)=CC^*_{d,k'}(k)$ for any diameter $k \equiv k' \pmod f$, $k \ge k_d$ for some threshold value $k_d$ dependent on $d$. Moreover 

\[AC^*_{d,k'}(k) =
\begin{dcases}
\ \frac{1}{2}\left ( \frac{4}{f}\right ) ^f k^f+\left ( \frac{4}{f}\right ) ^{f-1}k^{f-1}+O(k^{f-2}) &  \mbox{\quad\ for even } d \\
\ \quad \left ( \frac{4}{f}\right ) ^f k^f \qquad \qquad \qquad \quad +O(k^{f-2}) &  \mbox{\quad\  for odd } d,\\
\end{dcases}
\]

\[CC^*_{d,k'}(k) =
\begin{dcases}
\ \frac{1}{2}\left ( \frac{4}{f}\right ) ^f k^f+\left ( \frac{4}{f}\right ) ^{f-1}k^{f-1}+O(k^{f-2}) & \mbox{\quad\ for even } d \\
\ \quad \left ( \frac{4}{f}\right ) ^f k^f \qquad \qquad \qquad \quad +O(k^{f-2}) & \mbox{\quad\ for odd } d.\\
\end{dcases}
\]

Note that the difference $AC^*_{d,k'}(k)-CC^*_{d,k'}(k)$ is $O(k^{f-2})$ for any $d$ and $k$.
\label {conjecture:ext}
\end{conjecture}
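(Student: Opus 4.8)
The plan is to split the claim into three sub-tasks: a \emph{lower bound} realised by explicit constructions, a \emph{matching upper bound}, and a \emph{rigidity/periodicity} statement that pins the extremal order to a single degree-$f$ polynomial on each residue class $k \equiv k' \pmod f$. For the lower bound I would work in the lattice picture: a circulant graph of dimension $f$, order $n$ and generating set $G=\{g_1,\dots,g_f\}$ is the quotient of $\Z^f$ by the sublattice $\Lambda = \ker(\varphi)$, where $\varphi:\Z^f\to\Z_n$ sends $e_i\mapsto g_i$, and its diameter is at most $k$ precisely when $\varphi(S_{f,k}) = \Z_n$, i.e.\ when the Lee sphere $S_{f,k}$ meets every coset of $\Lambda$ (equivalently $S_{f,k}+\Lambda=\Z^f$, so $n=[\Z^f:\Lambda]$). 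Thus the lower-bound problem becomes: exhibit, for each residue $k'$, a family of sublattices $\Lambda=\Lambda_{k',k}\subset\Z^f$ whose index is a degree-$f$ polynomial in $k$ with leading coefficient $\tfrac12(4/f)^f$ and the prescribed second coefficient, such that $S_{f,k}$ covers $\Z^f/\Lambda$. For $d=10,11$ (so $f=5$) this is exactly what the body of the paper carries out — the quintic order polynomials and the polynomially parametrised generating sets — and I would try to generalise the pattern to all $f$, verifying coverage by a finite case analysis in each residue class (only finitely many ``shapes'' of the boundary of $S_{f,k}$ arise modulo $\Lambda$), which reduces the infinite statement to a bounded computation.

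For the upper bound the target is $AC(d,k)\le \tfrac12(4/f)^f k^f + (4/f)^{f-1}k^{f-1}+O(k^{f-2})$ for even $d$ (and $(4/f)^f k^f+O(k^{f-2})$ for odd $d$), and here the gap from what is known is wide: Muga's bound $M_{AC}(d,k)=|S_{f,k}|$ has leading coefficient $2^f/f!$, exceeding the conjectured one by the factor $1/R_f=f^f/(2^{f-1}f!)\to\infty$. Closing this gap is the crux. The approach I would attempt is structural: take a hypothetical extremal graph, pass to its defining lattice $\Lambda$, argue that extremality forces $\Lambda$ to be ``balanced'' — no short vectors concentrated in few coordinate directions, generators comparable in a suitable averaged norm — and then bound the number of cosets met by $S_{f,k}$ by a packing/covering estimate: since translates of a fundamental domain of $\Lambda$ that cover $S_{f,k}$ number at least $|S_{f,k}|/\det\Lambda$, a balanced $\Lambda$ forces $\det\Lambda \le \tfrac12(4/f)^f k^f(1+o(1))$. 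Making ``balanced'' precise, and excluding lattices that are locally efficient but globally cover too much, is exactly where earlier attempts foundered (Muga's flawed odd-order argument being a cautionary instance); I expect this to be the main obstacle, and it is essentially open for every $f\ge 3$.

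Finally, even granting matching first-order bounds, the conjecture asserts the \emph{exact} equality $CC(d,k)=CC^*_{d,k'}(k)$ with a genuine polynomial for all $k\ge k_d$. I would derive this from a uniqueness statement: for $k$ large in a fixed residue class the extremal lattice is forced, up to the obvious symmetries (coordinate permutations and multiplication of $G$ by a unit), to be the one produced by the construction above, so that $\det\Lambda$ inherits its polynomial dependence on $k$. This is plausible because the constructed families are already seen to be rigid for small diameters by exhaustive search, but upgrading ``rigid in the verified range'' to ``rigid for all large $k$'' requires the same structural control as the upper bound, so in practice these two steps stand or fall together. In summary: the lower-bound half is accessible by the explicit-family method demonstrated here for $f=5$; the upper-bound and rigidity halves collapse to one hard problem — sharpening the Wong--Coppersmith--Muga bound down to the Dougherty--Faber leading order — which I regard as the genuine obstacle and which remains open.
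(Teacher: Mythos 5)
The statement you are asked to prove is stated in the paper as a \emph{conjecture}, and the paper supplies no proof of it: the only support offered is that it holds for dimensions 1 and 2, that it is consistent with the largest-known families for dimensions 3--5 (including the degree 10 and 11 constructions that are the subject of the paper), and the closing remark that establishing it as either a lower or an upper bound ``would represent an important advance in this field.'' Your proposal is therefore not being measured against an existing argument, and, to your credit, you diagnose the situation accurately rather than manufacturing a proof. Your decomposition into lower bound, upper bound, and rigidity is sensible, and your lattice reformulation ($S_{f,k}+\Lambda=\Z^f$ with $n=[\Z^f:\Lambda]$) is exactly the Dougherty--Faber framework the paper uses in Section 5 for its existence proofs. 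It is worth noting that the lower-bound half is already partially settled in the literature: for even degree the Dougherty--Faber bound $DF_{AC}(d,k)$ attains both conjectured leading terms for Abelian Cayley graphs, and the Chen--Jia bound $CJ(d,k)$ attains the leading term for circulant graphs, so the genuinely missing lower-bound content is the second coefficient in the circulant case and the odd-degree case, not the leading order.

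The genuine gap is the one you yourself name: there is no route from the known upper bound $M_{AC}(d,k)$, whose leading coefficient is $2^f/f!$, down to the conjectured $\tfrac12(4/f)^f$, a discrepancy by the factor $1/R_f=f^f/(2^{f-1}f!)$ which grows without bound in $f$. Your proposed ``balanced lattice'' argument is only a heuristic; no precise formulation of balancedness is given, no lemma is stated that would exclude unbalanced extremal lattices, and the packing inequality you invoke ($\det\Lambda\le|S_{f,k}|$) is just the Wong--Coppersmith--Muga bound itself, so nothing in the sketch actually improves it. The same applies to the rigidity step: asserting that the extremal lattice is ``forced'' for large $k$ in each residue class is precisely the content of the conjecture restated, not an argument for it. Even the finite-case-analysis claim for the lower bound (``only finitely many shapes of the boundary of $S_{f,k}$ arise modulo $\Lambda$'') is unsubstantiated for general $f$; the paper's degree 10 proof already requires a case analysis over all 32 orthants running to several hundred pages, and no uniform-in-$f$ mechanism is offered. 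In short, your proposal is an honest research programme, not a proof, and it correctly identifies why the statement remains a conjecture; but as a proof attempt it establishes nothing beyond what the cited prior work already gives.
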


The conjecture is true for dimensions 1 and 2, with $k_d=1$. For dimensions 3 and 4 its conditions are satisfied by the largest-known circulant graphs, with $k_6=1$, $k_7=3$, $k_8=3$ and $k_9=5$, see \cite{Lewis8}, and also for largest-known Abelian Cayley graphs of dimension 3, which are just the largest-known circulant graphs.

For even degree both the leading and second terms are identical to those of the Abelian Cayley graph lower bound $DF_{AC}(d,k)$. Furthermore, the first two coefficients for both even and odd degree are the same multiple $R_f= 2^{f-1}(f!/f^f)$ of the corresponding terms for the Abelian Cayley upper bound $M_{AC}(d,k)$.

If Conjecture \ref{conjecture:ext} is true for dimension 5 then this would give the following formulae for the order of extremal circulant graphs of degree 10 and 11:
\[ CC(d,k) =
\begin {cases}
(512k^5+1280k^4)/3125 + O(k^3) & \mbox { for } d=10 \\
1024k^5/3125 \qquad \qquad \ \, + O(k^3) & \mbox { for } d=11 \\
\end {cases}
\]

As mentioned, this paper presents the construction of infinite families of largest-known circulant graphs of degree 10 and 11. The formulae for the order $L_{CC}(10,k)$ and $L_{CC}(11,k)$ of these graphs are quintic polynomials in the diameter with leading and second terms that match those from Conjecture \ref{conjecture:ext}. They are conjectured to be extremal for all diameters above defined thresholds.


\section {Largest-known circulant graphs of degree 10}

The process followed to discover the largest-known degree 10 and 11 graphs and the quintic polynomials in the diameter that define their orders and generating sets was an extension of the methods used by Dougherty and Faber for the degree 6 and 7 families and by the author for degrees 8 and 9. It involved a combination of four methods: application of the extremal order conjecture (Conjecture \ref{conjecture:ext}), analysis of the largest-known families of smaller degree to discover common features that may extrapolate, computer searches that needed to be increasingly sharply focused as the diameter increased, and a measure of inspiration and  pattern recognition.

The extremal order conjecture implied that the extremal graphs of degree 10 would have order defined by quintic polynomials in the diameter, one for each diameter class $k \pmod 5$, and also specified their common first two coefficients, reducing the degrees of freedom accordingly. As with the approach for degrees 6 to 9, for small diameter the extremality of the graphs was confirmed by conducting a computer search using feasible generating sets for graphs of every order up to the upper bound. However the number of possible permutations of elements for generating sets of dimension 5 increases rapidly with diameter, quickly exceeding available computing power. For degree 10 the graphs could only be proven extremal up to diameter 5, which provided a maximum of only one graph as the basis for each of the five presumed families.

\begin {table} [!b]
\small
\caption{Largest-known circulant graphs of degree 10, up to diameter 16.} 
\centering 
\setlength {\tabcolsep} {4pt}
\begin{tabular}{ @ { } c  r l l l}
\noalign {\vskip 2mm}  
\hline\hline 
\noalign {\vskip 1mm} 
Diameter & Order & Isomorphism & Generating set* & Status, extremality \\ 
$k$ & $n(10,k)$ & family &  & checked to $M_{AC}(10,k)$ \\
\hline 
\noalign {\vskip 1mm} 
2 & 51 & - &1, 2, 10, 16, 23 &  Extremal (checked to 61)\\
3 & 177 & - &1, 12, 19, 27, 87 &  Extremal (checked to 231) \\
4 & 457 & 4 &1, 20, 130, 147, 191 & Extremal (checked to 681) \\
5 & 1099 & 0 &1, 53, 207, 272, 536 & Extremal (checked to 1683) \\
6 & 2380 & 1 &1, 555, 860, 951, 970  &  Largest known \\
   & 2329 & 1(odd) &1, 75, 390, 453, 764  &  Largest-known odd order \\
7 & 4551 & 2 &1, 739, 1178, 1295, 1301 & Largest known  \\
8 & 8288 & 3 &1, 987, 2367, 2534, 3528 & Largest known  \\
   & 8183 & 3(odd)a &1, 286, 294, 1707, 3758 & Largest-known odd order \\
   &          & 3(odd)b &1, 112, 120, 953, 1504 & \\
9 & 14099 & 4 & 1, 247, 1766, 1983, 3494 &  Largest known  \\
10 & 22805 & 0 & 1, 313, 2495, 2846, 5662 &  Largest known  \\
11 & 35568 & 1 & 1, 4347, 7470, 7903, 11808 & Largest known  \\
     & 35243 & 1(odd) &1, 387, 3528, 3877, 7010  &  Largest-known odd order \\
12 & 53025 & 2 & 1, 5251, 19281, 19291, 19806 &  Largest known  \\
13 & 77572 & 3 & 1, 6347, 14103, 14740, 21098  & Largest known  \\
     & 77077 & 3(odd)a &1, 1594, 21165, 36774, 36784 & Largest-known odd order \\
     &            & 3(odd)b &1, 4344, 29303, 38093, 38103 & \\
14 & 110045 & 4 & 1, 827, 9176, 9935, 18272 &  Largest known  \\
15 & 152671 & 0 & 1, 973, 11663, 12716, 25364 &  Largest known  \\
16 & 208052 & 1 & 1, 17147, 30784, 32007, 47918  & Largest known  \\
     & 207037 & 1(odd) &1, 1131, 14794, 15845, 29496  &  Largest-known odd order \\
\hline
\noalign {\vskip 1mm} 
&& \multicolumn {3} {l} {\footnotesize * for each isomorphism class of graphs just one of the generating sets is listed} 
\end{tabular}
\label{table:10A}
\end{table}

From an analysis of the largest-known families of smaller degree, common factors were discovered that were tentatively assumed to remain valid, greatly reducing the search space for the computer runs. For example, as mentioned earlier, every graph in a largest-known circulant graph family up to degree 9 has a primitive generating set, so that the computer searches were set to fix one of the generators at 1, eliminating a degree of freedom. Although all Abelian Cayley graphs have girth 3 or 4 by definition, all the largest-known families have an odd girth that is maximal ($2k+1$), and so the computer searches were restricted to maximal odd girth, which reduced the run-time significantly. Also for each degree $d \le 9$ the order polynomials for the families for diameter $k \equiv 0 \pmod f$ have value 0 or 1 for $k=0$ depending on the parity of the order of the family, again reducing the degree of freedom. It was also observed that generating sets of largest-known families often include pairs of generators differing by a small value that increases linearly with diameter. Where this was found to occur for a single graph in a family, the subsequent search for other graphs in the prospective family was restricted to include such pairs, further reducing the degree of freedom.

Even utilising these and other similar techniques, the process remained complex and time-consuming. Each newly-discovered graph became a potential member of its family, restricting the freedom of the corresponding order polynomial and further sharpening the search for the next graph in the family. Any subsequent search failure would require backtracking to eliminate a candidate graph and initiate a search for the next candadiate. However the challenge eventually yielded to the effort as each graph family in turn was completed, for degree 10 and similarly for degree 11. The largest-known degree 10 circulant graphs up to diameter 16, as discovered by the author, are shown in Table \ref {table:10A}.

For each diameter there is a single isomorphism class of largest-known graphs, each with at least one primitive generating set. For diameter $k \ge 4$ it emerges that these isomorphism classes are partitioned into five families according to the value of $k \pmod 5$. Note that for diameter $k \equiv 1$ or $3 \pmod5$, when $k\ge 6$ the largest-known graphs have even order.  For completeness the largest-known degree 10 circulant graphs of odd order $L_{OC}(d,k)$ and diameter $k \equiv 1$ and $k \equiv 3 \pmod 5$ are also included in the table, as classes 1(odd), 3(odd)a and 3(odd)b.

The following quintic polynomials in $k$ determine the order of these families of largest-known graphs for arbitrary diameter $k \geq 4$. $L_{CC}(10,k)=$
\[
\begin{cases}
\ (512k^5+1280k^4+6400k^3+8000k^2+6250k+3125)/3125 &\mbox{ for } k \equiv 0 \pmod 5 \\
\ (512k^5+1280k^4+6560k^3+9520k^2+6100k+1028)/3125 &\mbox{ for } k \equiv 1 \pmod 5 \\
\ (512k^5+1280k^4+6080k^3+7840k^2+10010k+3741)/3125 &\mbox{ for } k \equiv 2 \pmod 5 \\
\ (512k^5+1280k^4+6560k^3+7600k^2+4180k+1344)/3125 &\mbox{ for } k \equiv 3 \pmod 5 \\
\ (512k^5+1280k^4+6400k^3+8640k^2+6890k+757)/3125 &\mbox{ for } k \equiv 4 \pmod 5 \\
\end{cases}
\]

The families of largest-known graphs of odd order for diameter $k \equiv 1$ and $k \equiv 3 \pmod 5$ have order given by the following quintics. $L_{OC}(10,k)=$
\[
\begin{cases}
\ (512k^5+1280k^4+5760k^3+9920k^2+6450k-2047)/3125 &\mbox{ for } k \equiv 1 \pmod 5 \\
\ (512k^5+1280k^4+5760k^3+8800k^2+4830k+819)/3125 &\mbox{ for } k \equiv 3 \pmod 5 \\
\end{cases}
\]

\begin {table} [!htb]
\caption{Order of largest-known circulant graphs of degree 10 for any diameter $k\ge4$.}
\centering 
\setlength {\tabcolsep} {5pt}
\begin{tabular}{ @ { } c c r r r r r r r r r l l c} 
\noalign {\vskip 2mm} 
\hline\hline 
\noalign {\vskip 1mm} 
Diameter & & \multicolumn {6} {l} {Order $L_{CC}(10,k)$} & & & & & & Isomorphism \\ 
$k \pmod 5$ & &\multicolumn {6} {l} {in $a$-format*} & & & & & where & family \\
\hline 
\noalign {\vskip 1mm} 
0 &&  ( 1 & 2 & 8 & 8 & 5 & 2 )&/2  & & & & $a=4k/5$ & 0 \\
1 &&  ( 1 & 1 & 7 & 5 & 2 & 0 )&/2 & & & & $a=(4k+1)/5$ & l \\
2 &&  ( 1 & 0 & 6 & 0 & 5 & 0 )&/2& & & & $a=(4k+2)/5$ & 2 \\
3 &&  ( 1 & -1 & 7 & -5 & 2 & 0 )&/2 & & & & $a=(4k+3)/5$ & 3 \\
4 &&  ( 1 & -2 & 8 & -8 & 5 & -2 )&/2 & & & & $a=(4k+4)/5$ & 4 \\

\hline
\noalign {\vskip 1mm} 
  & & \multicolumn {6} {l} {Order $L_{OC}(10,k)$} & & & & &  \\ 
\hline 
\noalign {\vskip 1mm} 
1 &&  ( 1 & 1 & 6 & 6 & 2 & -2 )&/2 & & & & $a=(4k+1)/5$ & 1(odd) \\
3 &&  ( 1 & -1 & 6 & -2 & 0 & 0 )&/2 & & & & $a=(4k+3)/5$ & 3(odd)a \& b \\

\hline
\noalign {\vskip 1mm} 
&&\multicolumn {12} {l} {\small * $( c_5\:  c_4\:  c_3\:  c_2\:  c_1\:  c_0)/b=(c_5a^5+c_4a^4+c_3a^3+c_2a^2+c_1a+c_0)/b$} 
\end{tabular}
\label{table:10B}
\end{table}

The graphs of all these families have orders that share common leading and second coefficients, $512/3125$ and $1280/3125$, consistent with Conjecture \ref{conjecture:ext}. These expressions may be greatly simplified by replacing the diameter $k$ with a parameter $a=(4k+c)/5$ where the value of $c$ depends on $k \pmod 5$ and is chosen to ensure $a$ remains integral, and presented in so-called \emph{$a$-format}, see Table \ref {table:10B}.

These formulae define the order $L_{CC}(10,k)$ of the largest-known circulant graphs of degree 10 for any diameter $k \ge 4$. For diameter $k\leq 3$, the graphs with order defined by these formulae are not extremal. For $k=2$ the formula gives a graph of order 45 whereas the extremal order is 51, and for $k=3$ the formula gives 156 instead of 177. The existence of these graph families  for all $k  \ge 4$ is proved in Section \ref{section:proof10}. They are the largest degree 10 circulant graphs discovered for any $k \geq 4$ and are conjectured to be extremal.


As mentioned, there is one family of graphs for each diameter class $k \pmod5$, so five in total. Within each family there is one isomorphism class of graphs with largest-known order for each diameter. In general for any circulant graph of order $n$ the maximum number of primitive generating sets for graphs in the same isomorphism class is equal to the dimension $f$ of the graph. This is because for each generator in the set there is a factor in $\Z_n$ such that the product equals 1 if and only if the generator is coprime with $n$. For example, consider the primitive generating sets for diameters $k=5$ and $k=10$, enumerated in Table \ref {table:100D}. For diameter $k=5$ there are five primitive generating sets for the isomorphism class, but only four for diameter $k=10$ because the generator 2495 in Set 1 is not coprime with the order 22805.

\begin {table} [!htbp]
\small
\caption{Example: enumeration of primitive generating sets for degree 10, class 0, for diameters $k=5$ and $k=10$.} 
\centering 
\setlength {\tabcolsep} {6pt}
\begin{tabular}{ @ { } c r r r r r r r r r r r r r } 
\noalign {\vskip 2mm} 
\hline\hline 
\noalign {\vskip 1mm}
 & &  \multicolumn {5} {l} {Diameter $k=5$}  & & & \multicolumn {5} {l} {Diameter $k=10$} \\
 & &  \multicolumn {5} {l} {Order $L_{CC}(10,5)=1099$}  & & & \multicolumn {5} {l} {Order $L_{CC}(10,10)=22805$} \\
\noalign {\vskip 1mm} 
\hline
\noalign {\vskip 1mm}
Generating & & \multicolumn {5} {l} {Generators} & & & \multicolumn {5} {l} {Generators} \\
set & & $g_1$ & $g_2$ & $g_3$ & $g_4$ & $g_5$ & & & $g_1$ & $g_2$ & $g_3$ & $g_4$ & $g_5$ \\
\hline 
\noalign {\vskip 1mm} 
1 & & 1 & 53 & 207 & 272 & 536  & & & 1 & 313 & 2495 & 2846 & 5662 \\
2 & & 1 & 351 & 281 & 285 & 509 & & & 1 & 1970 & 6819 & 6827 & 218 \\
3 & & 1 & 95 & 319 & 400 & 375 & & & 1 & 3775 & 6172 & 7396 & 11162 \\
4 & & 1 & 90 & 454 & 296 & 292 & & & - \\
5 & & 1 & 232 & 176 & 534 & 394 & & & 1 & 4663 & 1612 & 3635 & 6284 \\
\hline
\end{tabular}
\label{table:100D}
\end{table}
\begin {table} [!htb]
\small
\caption{Generating sets 1 to 3 and one example for set 5 for degree 10 graphs of class 0: diameter $k\equiv 0 \pmod 5$.} 
\centering 
\setlength {\tabcolsep} {5pt}
\begin{tabular}{ @ { } c l c c c c r l l l l c c c c r l} 
\noalign {\vskip 2mm} 
\hline\hline 
\noalign {\vskip 1mm} 
Generator & \multicolumn {8} {l} {Gen set 1, $k\equiv 0 \pmod {5}$} && \multicolumn {7} {l} {Gen set 2, $k\equiv 0 \pmod {5}$} \\ 
\hline 
\noalign {\vskip 1mm} 
$g_1$ &   ( 0 & 0 & 0 & 0 & 0 & 1 ) & & & & ( 0 & 0 & 0 & 0 & 0 & 1 ) \\
$g_2$ &  ( 0 & 0 & 1 & 1 & 6 & 2 ) & /2 & & & ( 0 & 4 & 5 & 28 & 11 & 11 ) \\
$g_3$ &  ( 0 &  1 & 1 & 6 & 0 & -2 ) & /2 & & & ( 0 & 10 & 13 & 71 & 30 & 29 ) \\
$g_4$ &  ( 0 & 1 & 2 & 8 & 7 & 4 )& /2 & & & ( 0 & 10 & 13 & 71 & 31 & 29 ) \\
$g_5$ &  ( 0 & 2 & 4 & 15 & 15 & 4 ) & /2 & & & ( 0 & 13 & 17 & 92 & 40 & 37 ) \\
\hline
\noalign {\vskip 1mm} 
 & \multicolumn {8} {l} {Gen set 3, $k\equiv 0 \pmod {15}$} && \multicolumn {7} {l} {Gen set 3, $k\equiv 5 \pmod {15}$} \\ 
\hline 
\noalign {\vskip 1mm} 
$g_1$ &   ( 0 & 0 & 0 & 0 & 0 & 1 ) & & & & ( 0 & 0 & 0 & 0 & 0 & 1 ) \\
$g_2$ &  ( 1 & -2 & 2 & -20 & -9 & 0 ) & /6 & & & ( 0 & 2 & 0 & 8 &-14 & -14 ) & /6 \\
$g_3$ &  ( 1 & 2 & 6 & 6 & -5 & 0 ) & /6 & & & ( 0 & 4 & 6 & 28 & 14 & 2 ) & /6 \\
$g_4$ &  ( 1 & 4 & 8 & 16 & -9 & -12 ) & /6 & & & ( 1 & 2 & 10 & 10 & 15 & 4 ) & /6 \\
$g_5$ &  ( 1 & 6 & 16 & 38 & 35 & 12 ) & /6 & & & ( 1 & 6 & 16 & 38 & 35 & 12 ) & /6 \\
\hline
\noalign {\vskip 1mm} 
 & \multicolumn {8} {l} {Gen set 3, $k\equiv 10 \pmod {15}$} && \multicolumn {7} {l} {Gen set 5, $k\equiv 0 \pmod {85}$} \\ 
\hline 
\noalign {\vskip 1mm} 
$g_1$ &   ( 0 & 0 & 0 & 0 & 0 & 1 ) & & & &  ( 0 & 0 & 0 & 0 & 0 & 1 ) \\
$g_2$ &  ( 0 & 4 & 8 & 30 & 30 & 10 ) & /6 & & &  ( 2 & -2 & 5 & -33 & -27 & -17 ) &/17  \\
$g_3$ &  ( 1 & 0 & 8 & 0 & 19 & 16 ) & /6 & & & ( 3 & 14 & 33 & 78 & 36 & 17 ) &/17  \\
$g_4$ &  ( 1 & 2 & 6 & 6 & -5 & 0 ) & /6  & & & ( 7 & 10 & 60 & 46 & 101 & 68 )& /34  \\
$g_5$ &  ( 1 & 6 & 14 & 36 & 19 & 4 ) & /6 & & & ( 4 & 13 & 44 & 70 & 65 & 17 ) & /17  \\
\hline
\noalign {\vskip 1mm} 
& \multicolumn {16} {l} {\footnotesize $( c_5\:  c_4\:  c_3\:  c_2\:  c_1\:  c_0)/b=(c_5a^5+c_4a^4+c_3a^3+c_2a^2+c_1a+c_0)/b$ where $a=4k/5$} 
\end{tabular}
\label{table:100A1}
\end{table}

For diameter $k \equiv 0 \pmod 5$, isomorphism class 0 has between 3 and 5 primitive generating sets, depending on the value of $k$. Generating sets 1 and 2 are defined by formulae valid for every $k\equiv 0 \pmod 5$, and set 3 is comprised of three subsets of formulae valid for $k\equiv 0, 5$ and $10 \pmod {15}$, see Table \ref {table:100A1}. Set 4 has four subsets valid for $k \equiv 0, 5, 15$ and $20 \pmod {25}$, omitting $k \equiv10 \pmod {25}$, see Table \ref {table:100A2}. Set 5 has 16 subsets valid for each $k \pmod {85}$ with the exception of $k\equiv 60 \pmod {85}$. An example of set 5 for $k \equiv0 \pmod {85}$ is included in Table \ref {table:100A1}, with the complete listing given in Tables \ref {table:100B} and \ref {table:100C} in the appendix.

\begin {table} [!htb]
\small
\caption{Generating set 4 for degree 10 graphs of class 0: diameter $k\equiv 0 \pmod 5$.} 
\centering 
\setlength {\tabcolsep} {5pt}
\begin{tabular}{ @ { } c l c c c c r l l l l c c c c r l} 
\noalign {\vskip 2mm} 
\hline\hline 
\noalign {\vskip 1mm} 
Generator & \multicolumn {8} {l} {Gen set 4, $k\equiv 0 \pmod {25}$} && \multicolumn {7} {l} {Gen set 4, $k\equiv 5 \pmod {25}$} \\ 
\hline 
\noalign {\vskip 1mm} 
$g_1$ &   ( 0 & 0 & 0 & 0 & 0 & 1 ) & & & & ( 0 & 0 & 0 & 0 & 0 & 1 ) \\
$g_2$ &  ( 1 & 10 & 18 & 62 & 21 & 20 ) & /10 & & & ( 1 & -4 & -2 & -40 & -27 & -24 ) & /10 \\
$g_3$ &  ( 1 & 10 & 18 & 62 & 31 & 20 ) & /10 & & & ( 1 & 6 & 18 & 40 & 43 & 16 ) & /10 \\
$g_4$ &  ( 1 & 0 & 3 & -8 & -14 & -5 ) & /5 & & & ( 1 & 6 & 13 & 35 & 13 & 11 ) & /5 \\
$g_5$ &  ( 1 & 5 & 13 & 32 & 21 & 15 ) & /5 & & & ( 1 & 6 & 13 & 35 & 18 & 11 ) & /5 \\
\hline
\noalign {\vskip 1mm} 
 & \multicolumn {8} {l} {Gen set 4, $k\equiv 15 \pmod {25}$} && \multicolumn {7} {l} {Gen set 4, $k\equiv 20 \pmod {25}$} \\ 
\hline 
\noalign {\vskip 1mm} 
$g_1$ &   ( 0 & 0 & 0 & 0 & 0 & 1 ) & & & & ( 0 & 0 & 0 & 0 & 0 & 1 ) \\
$g_2$ &  ( 1 & -2 & -2 & -24 & -33 & -12 ) & /10 & & & ( 1 & -6 & -2 & -46 & -21 & -16 ) & /10 \\
$g_3$ &  ( 1 & 8 & 18 & 56 & 37 & 28 ) & /10 & & & ( 1 & -6 & -2 & -46 & -11 & -16 ) & /10 \\
$g_4$ &  ( 1 & -2 & 3 & -19 & -8 & -7 ) & /5& & & (  1 & -1 & 3 & -16 & -11 & -11 ) & /5 \\
$g_5$ &  ( 1 & -2 & 3 & -19 & -3 & -7 ) & /5 & & & ( 1 & 4 & 13 & 24 & 24 & 9 ) & /5 \\
\hline
\noalign {\vskip 1mm} 
& \multicolumn {16} {l} {\footnotesize $( c_5\:  c_4\:  c_3\:  c_2\:  c_1\:  c_0)/b=(c_5a^5+c_4a^4+c_3a^3+c_2a^2+c_1a+c_0)/b$ where $a=4k/5$} 
\end{tabular}
\label{table:100A2}
\end{table}


\newpage

For diameter $k \equiv 1 \pmod 5$, isomorphism class 1 has just one primitive generating set, as shown in Table \ref {table:101A}, defined by formulae valid for every $k\equiv 1 \pmod 5$.

\begin {table} [!htb]
\small
\caption{Generating set 1 for degree 10 graphs of class 1: diameter $k\equiv 1 \pmod 5$.} 
\centering 
\setlength {\tabcolsep} {6pt}
\begin{tabular}{ @ { } c l c c c c r r l l c c c c r l} 
\noalign {\vskip 2mm} 
\hline\hline 
\noalign {\vskip 1mm} 
Generator && \multicolumn {8} {l} {Gen set 1, $k\equiv 1 \pmod {5}$}  \\ 
\hline 
\noalign {\vskip 1mm} 
$g_1$ &&   ( 0 & 0 & 0 & 0 & 0 & 1 )  \\
$g_2$ &&  ( 0 & 1 & 2 & 7 & 12 & 0 ) & /2 \\
$g_3$ &&  ( 0 & 2 & 1 & 13 & 4 & 0 ) & /2  \\
$g_4$ &&  ( 0 & 1 & 1 & 7 & 5 & 1 ) &   \\
$g_5$ &&  ( 0 & 3 & 3 & 20 & 14 & 0 ) & /2  \\
\hline 
\noalign {\vskip 1mm} 
\multicolumn {16} {l} {\footnotesize $( c_5\:  c_4\:  c_3\:  c_2\:  c_1\:  c_0)/b=(c_5a^5+c_4a^4+c_3a^3+c_2a^2+c_1a+c_0)/b$ where $a=(4k+1)/5$}  
\end{tabular}
\label{table:101A}
\end{table}


For diameter $k \equiv 2 \pmod 5$, isomorphism class 2 has 1 or 2 primitive generating sets, depending on the value of $k$, as shown in Table \ref {table:102A}. Generating set 1 is comprised of two subsets of formulae valid for $k\equiv 2$ and $7 \pmod {15}$ omitting $k \equiv12 \pmod {15}$. Set 2 also has two subsets valid for $k \equiv 7$ and $12 \pmod {15}$, omitting $k \equiv 2 \pmod {15}$.

\begin {table} [!htb]
\small
\caption{Generating sets for degree 10 graphs of class 2: diameter $k\equiv 2 \pmod 5$.} 
\centering 
\setlength {\tabcolsep} {6pt}
\begin{tabular}{ @ { } c l c c c c r l l l l c c c r r l } 
\noalign {\vskip 2mm} 
\hline\hline 
\noalign {\vskip 1mm} 
Generator & \multicolumn {8} {l} {Gen set 1, $k\equiv 2 \pmod {15}$} && \multicolumn {7} {l} {Gen set 1, $k\equiv 7 \pmod {15}$} \\ 
\hline 
\noalign {\vskip 1mm} 
$g_1$ &   ( 0 & 0 & 0 & 0 & 0 & 1 ) & & & & ( 0 & 0 & 0 & 0 & 0 & 1 ) \\
$g_2$ &  ( 0 & 1 & 0 & 5 & 0 & 2 ) & /2 & & & (  0 & 1 & 0 & 5 & 0 & 2 ) & /2 \\
$g_3$ &  ( 1 & -1 & 5 & -7 & -2 & -6 ) & /6 & & &      ( 1 & -1 & 4 & -7 & -3 & -6 ) & /6 \\
$g_4$ &  ( 1 & -1 & 5 & -7 & 4 & -6 ) & /6 & & &       ( 1 & -1 & 7 & -7 & 6 & -6  ) & /6 \\
$g_5$ &  ( 1 & -1 & 8 & -7 & 13 & -6 ) & /6 & & & ( 1 & -1 & 7 & -7 & 12 & -6  ) & /6 \\
\hline
\noalign {\vskip 1mm} 
 & \multicolumn {8} {l} {Gen set 2, $k\equiv 7 \pmod {15}$} && \multicolumn {7} {l} {Gen set 2, $k\equiv 12 \pmod {15}$} \\ 
\hline 
\noalign {\vskip 1mm} 
$g_1$ &   ( 0 & 0 & 0 & 0 & 0 & 1 ) & & & & ( 0 & 0 & 0 & 0 & 0 & 1 ) \\
$g_2$ &  ( 0 & 1 & 0 & 5 & 0 & 2 ) & /2 & & & (  0 & 1 & 0 & 5 & 0 & 2 ) & /2 \\
$g_3$ &  ( 1 & 1 & 4 & 7 & -3 & 6 ) & /6 & & &    ( 1 & 1 & 5 & 7 & -2 & 6 ) & /6 \\
$g_4$ &  ( 1 & 1 & 7 & 7 & 6 & 6  ) & /6 & & &     ( 1 & 1 & 5 & 7 & 4 & 6 ) & /6 \\
$g_5$ &  ( 1 & 1 & 7 & 7 & 12 & 6 ) & /6 & & & ( 1 & 1 & 8 & 7 & 13 & 6 ) & /6 \\
\hline
\noalign {\vskip 1mm} 
& \multicolumn {16} {l} {\footnotesize $( c_5\:  c_4\:  c_3\:  c_2\:  c_1\:  c_0)/b=(c_5a^5+c_4a^4+c_3a^3+c_2a^2+c_1a+c_0)/b$ where $a=(4k+2)/5$} 
\end{tabular}
\label{table:102A}
\end{table}


\newpage

\begin {table} [!htbp]
\small
\caption{Generating set 1 for degree 10 graphs of class 3: diameter $k\equiv 3 \pmod 5$.} 
\centering 
\setlength {\tabcolsep} {6pt}
\begin{tabular}{ @ { } c l l c c c c r l l l l c c c c r l} 
\noalign {\vskip 2mm} 
\hline\hline 
\noalign {\vskip 1mm} 
Generator && \multicolumn {8} {l} {Gen set 1, $k\equiv 3 \pmod {5}$}  \\ 
\hline 
\noalign {\vskip 1mm} 
$g_1$ &&   ( 0 & 0 & 0 & 0 & 0 & 1 )  \\
$g_2$ &&  ( 0 & 1 & -2 & 7 & -12 & 0 ) & /2 \\
$g_3$ &&  ( 0 & 1  & -1 & 7 & -5 & 1 )  \\
$g_4$ &&  ( 0 & 2 & -1 & 13 & -4 & 0 ) & /2  \\
$g_5$ &&  ( 0 & 3 & -3 & 20 & -14 & 0 ) & /2  \\
\hline
\noalign {\vskip 1mm} 
\multicolumn {16} {l} {\footnotesize $( c_5\:  c_4\:  c_3\:  c_2\:  c_1\:  c_0)/b=(c_5a^5+c_4a^4+c_3a^3+c_2a^2+c_1a+c_0)/b$ where $a=(4k+3)/5$} 
\end{tabular}
\label{table:103A}
\end{table}

For diameter $k \equiv 3 \pmod 5$, isomorphism class 3 has just one primitive generating set, as shown in Table \ref {table:103A}, defined by formulae valid for every $k\equiv 3 \pmod 5$.


For diameter $k \equiv 4 \pmod 5$, isomorphism class 4 has between 3 and 5 primitive generating sets, depending on the value of $k$.  This replicates exactly the pattern of generating sets for isomorphism class 0, and the sets are all tabulated in the appendix. Generating sets 1 and 2 are defined by formulae valid for every $k\equiv 4 \pmod 5$, and set 3 is comprised of three subsets of formulae valid for $k\equiv 4, 9$ and $14 \pmod {15}$, see Table \ref{table:104A}.
Set 4 has four subsets valid for $k \equiv 4, 9, 19$ and $24 \pmod {25}$, omitting $k \equiv14 \pmod {25}$, see Table \ref{table:104B}. Set 5 has 16 subsets valid for each $k \pmod {85}$ with the exception of $k\equiv 24 \pmod {85}$, see Tables \ref {table:104C} and \ref {table:104D}.


\section {Largest known circulant graphs of degree 11}

The same process was followed to discover the largest-known degree 11 graphs and the quintic polynomials in the diameter that define their orders and generating sets as for degree 10. The largest-known degree 11 circulant graphs up to diameter 16 are shown in Table \ref {table:5E}. For diameter $k=2$ there are five distinct isomorphism classes, one of which does not have a primitive generating set. For $k=3$ there are two classes, including one with no primitive generating set. For higher diameters there is a single isomorphism class, with a primitive generating set, except for diameters $k =6,11$ and $16$, where there are two isomorphism classes.

\begin {table} [!htb]
\small
\caption{Largest-known circulant graphs of degree 11, up to diameter 16.} 
\centering 
\setlength {\tabcolsep} {3pt}
\begin{tabular}{ @ { } c  r l l l} 
\noalign {\vskip 2mm} 
\hline\hline 
\noalign {\vskip 1mm} 
Diameter & Order & Isomorphism & Generating set* & Status, extremality \\ 
$k$ & $n(11,k)$ & family & & checked to $M_{AC}(11,k)$ \\
\hline 
\noalign {\vskip 1mm} 
2 & 56 & - &1, 2, 10, 15, 22  & Extremal (checked to 72) \\
 &  & - &1, 4, 6, 15, 24  &  \\
 &  & - &1, 6, 10, 15, 18  &   \\
 &  & - &1, 9, 14, 21, 25  &   \\
 &  & - &2, 6, 7, 18, 21 & \\
3 & 210 & - &1, 49, 59, 84, 89  & Extremal (checked to 292) \\
  &     & - &2, 32, 63, 92, 98 & \\
4 & 576 & - &1, 9, 75, 155, 179 &  Largest known \\
5 & 1428 & 0 &1, 169, 285, 289, 387 & Largest known \\
6 & 3200 & 1a & 1, 101, 925, 1031, 1429 &  Largest known \\
   &          & 1b & 1, 265, 851, 1111, 1321 &  \\
7 & 6652 & 2 & 1, 107, 647, 2235, 2769 &  Largest known \\
8 & 12416 & 3 & 1, 145, 863, 4163, 5177 &  Largest known  \\
9 & 21572 & 4 & 1, 189, 1517, 8113, 9435 &  Largest known  \\
10 & 35880 & 0 & 1, 2209, 5127, 5135, 12537  & Largest known  \\
11 & 56700 & 1a & 1, 1053, 1061, 10603, 17965  & Largest known  \\
     &            & 1b & 1, 4113, 4121, 13301, 23723  &  \\
12 & 87248 & 2 & 1, 479, 4799, 34947, 39257 &  Largest known  \\
13 & 128852 & 3 & 1, 581, 5799, 51599, 57989  & Largest known  \\
14 & 184424 & 4 & 1, 693, 8325, 76901, 84523 &  Largest known  \\
15 & 259260 & 0 & 1, 10729, 39875, 39887, 90637 &  Largest known  \\
16 & 355576 & 1a & 1, 22307, 131327, 136371, 153621  & Largest known  \\
     &              & 1b & 1, 8579, 75569, 75583, 111513   \\

\hline
\noalign {\vskip 1mm} 
\multicolumn {5} {l} {\footnotesize * excludes the involution; for each isomorphism class of graphs just one of the generating sets is listed} 
\end{tabular}
\label{table:5E}
\end{table}

For diameter $k$ from 5 to 16, these graphs may be partitioned into five families of isomorphism classes according to the value of $k \pmod 5$. The following quintic polynomials in $k$ determine the order of these families of graphs. $L_{CC}(11,k)=$
\[
\begin{cases}
\ (1024k^5+9600k^3+12500k)/3125 &\mbox{ for } k \equiv 0 \pmod 5 \\
\ (1024k^5+8960k^3+2880k^2-260k-104)/3125 &\mbox{ for } k \equiv 1 \pmod 5 \\
\ (1024k^5+10240k^3+640k^2+5140k-2528)/3125 &\mbox{ for } k \equiv 2 \pmod 5 \\
\ (1024k^5+10240k^3-640k^2+5140k+2528)/3125 &\mbox{ for } k \equiv 3 \pmod 5 \\
\ (1024k^5+8960k^3+5120k^2+740k-6896)/3125 &\mbox{ for } k \equiv 4 \pmod 5 \\
\end{cases}
\]

Graphs with these orders have been constructed for all diameters up to $k=100$. They are the largest degree 11 circulant graphs discovered for any $k \geq 5$ and are conjectured to be extremal. Above diameter $k=100$, the existence of graphs with these constructions has not been confirmed. However they are conjectured to exist and be extremal for all higher diameters. The graphs of all these families have orders that share common leading and second coefficients, $1024/3125$ and $0$, consistent with Conjecture \ref{conjecture:ext}. As for degree 10, these expressions may be simplified by replacing the diameter $k$ with a variable $a$ that depends on the value of $k \pmod 5$, see Table \ref {table:11A}.

\begin {table} [!htbp]
\caption{Order of largest known circulant graphs of degree 11 for any diameter $k\ge5$.} 
\centering 
\setlength {\tabcolsep} {6pt}
\begin{tabular}{ @ { } c l r r r r r r r l c} 
\noalign {\vskip 2mm} 
\hline\hline 
\noalign {\vskip 1mm} 
Diameter & \multicolumn {6} {l} {Order $L_{CC}(11,k)$} & & & & Isomorphism \\ 
$k \pmod 5$ &\multicolumn {6} {l} {in bracket notation*} & & &  where & classes \\
\hline 
\noalign {\vskip 1mm} 
0 &  (1 & 0 & 6 & 0 & 5 & 0) & & & $a=4k/5$ & 0 \\
1 &  (1 & -1 & 6 & -2 & 0 & 0) & & & $a=(4k+1)/5$ & 1a \\
2 &  (1 & -2 & 8 & -8 & 5 & -2) & & & $a=(4k+2)/5$ & 2 \\
3 &  (1 & 2 & 8 & 8 & 5 & 2) & & & $a=(4k-2)/5$ & 3 \\
4 &  (1 & 1 & 6 & 6 & 2 & -2) & & & $a=(4k-1)/5$ & 4 \\

\hline
\noalign {\vskip 1mm} 
&\multicolumn {10} {l} {\footnotesize * $( c_5\:  c_4\:  c_3\:  c_2\:  c_1\:  c_0)=c_5a^5+c_4a^4+c_3a^3+c_2a^2+c_1a+c_0$} 
\end{tabular}
\label{table:11A}
\end{table}

For diameter $k\leq 4$, the graphs with order defined by these formulae are not extremal. For $k=2, 3, 4$ respectively, the formulae give graphs of order 40, 172 and 544 whereas the extremal orders are 56, 210 and 576. For diameter $k\ge 5$ there is one isomorphism class of families of graphs of order $L_{CC}(11,k)$ for each $k \pmod5$ except for $k\equiv 1\pmod 5$ which has two isomorphism classes, labelled 1a and 1b in Table \ref{table:5E}.


For diameter $k \equiv 0 \pmod 5$, isomorphism class 0 has 1 or 2 primitive generating sets, depending on the value of $k$, as shown in Table \ref {table:110A}. Generating set 1 is comprised of two subsets of formulae valid for $k\equiv 0$ and $10 \pmod {15}$ omitting $k \equiv5 \pmod {15}$. Set 2 also has two subsets valid for $k \equiv 0$ and $5 \pmod {15}$, omitting $k \equiv 10 \pmod {15}$.

\begin {table} [!htbp]
\small
\caption{Generating sets for degree 11 graphs of class 0: diameter $k\equiv 0 \pmod 5$.} 
\centering 
\setlength {\tabcolsep} {6pt}
\begin{tabular}{ @ { } c l r r r r r r l l l  r r r r r l} 
\noalign {\vskip 2mm} 
\hline\hline 
\noalign {\vskip 1mm} 
Generator & \multicolumn {8} {l} {Gen set 1, $k\equiv 0 \pmod {15}$} && \multicolumn {7} {l} {Gen set 1, $k\equiv 10 \pmod {15}$} \\ 
\hline 
\noalign {\vskip 1mm} 
$g_1$ &   ( 0 & 0 & 0 & 0 & 0 & 1 ) & & & & ( 0 & 0 & 0 & 0 & 0 & 1 ) \\
$g_2$ & ( 0 & 1 & 0 & 5 & 0 & 2 ) & /2 & & & (  0 & 1 & 0 & 5 & 0 & 2 ) & /2 \\
$g_3$ & ( 1 & -1 & 7 & -7 & 6 & -6 ) & /6 & & & ( 1 & -1 & 5 & -7 & -2 & -6 ) & /6  \\
$g_4$ & ( 1 & -1 & 7 & -7 & 12 & -6 ) & /6 & & &  ( 1 & -1 & 5 & -7 & 4 & -6 ) & /6  \\
$g_5$ & ( 2 & 1 & 14 & 7 & 18 & 6 ) & /6 & & &  ( 2 & 1 & 10 & 7 & 2 & 6 ) & /6 \\
\hline
\noalign {\vskip 1mm} 
 & \multicolumn {8} {l} {Gen set 2, $k\equiv 0 \pmod {15}$} && \multicolumn {7} {l} {Gen set 2, $k\equiv 5 \pmod {15}$} \\ 
\hline 
\noalign {\vskip 1mm} 
$g_1$ &   ( 0 & 0 & 0 & 0 & 0 & 1 ) & & & & ( 0 & 0 & 0 & 0 & 0 & 1 ) \\
$g_2$ &  ( 0 & 1 & 0 & 5 & 0 & 2 ) & /2 & & & (  0 & 1 & 0 & 5 & 0 & 2 ) & /2 \\
$g_3$ &  ( 1 & 1 & 7 & 7 & 6 & 6 ) & /6 & & & ( 1 & 1 & 5 & 7 & -2 & 6 ) & /6 \\
$g_4$ &  ( 1 & 1 & 7 & 7 & 12 & 6 ) & /6 & & & ( 1 & 1 & 5 & 7 & 4 & 6 ) & /6 \\
$g_5$ &  ( 2 & -1 & 14 & -7 & 18 & -6 ) & /6 & & & ( 2 & -1 & 10 & -7 & 2 & -6 ) & /6 \\
\hline
\noalign {\vskip 1mm} 
& \multicolumn {16} {l} {\footnotesize $( c_5\:  c_4\:  c_3\:  c_2\:  c_1\:  c_0)/b=(c_5a^5+c_4a^4+c_3a^3+c_2a^2+c_1a+c_0)/b$ where $a=4k/5$} 
\end{tabular}
\label{table:110A}
\end{table}


For diameters $k \equiv 1$ to $4 \pmod5$ the tables of generating sets are found in the appendix.

For diameter $k \equiv 1 \pmod 5$, isomorphism class 1a has 1 to 4 primitive generating sets, depending on the value of $k$. Generating set 1 is comprised of five subsets of formulae valid for $k\equiv 1, 6, 11, 16$ and $21 \pmod {25}$, and generating set 2 has four subsets for $k\equiv 1, 6, 16$ and $21 \pmod {25}$, omitting $k\equiv 11 \pmod {25}$, see Table \ref {table:111aA}.   The formulae for generating sets 3 and 4 repeat with diameter increments of 65 and 85 respectively. Set 3 has 12 subsets valid for each $k \pmod {65}$ with the exception of $k\equiv 26 \pmod {65}$, and set 4 has 16 subsets valid for each $k \pmod {85}$ with the exception of $k\equiv 46 \pmod {85}$. For sets 3 and 4 just one example of each is given in Table \ref {table:111aB}.

Also for diameter $k \equiv 1 \pmod 5$, isomorphism class 1b has 2 to 4 primitive generating sets, depending on the value of $k$. Generating sets 1 and 2 are valid for every $k\equiv 1 \pmod 5$, see Table \ref {table:111bA}. Generating set 3 has six subsets valid for $k\equiv 1, 6, 16, 21, 26$ and $31 \pmod {35}$, omitting $k\equiv 11 \pmod {35}$, see Table \ref {table:111bB}.   The formulae for generating set 4 repeat with diameter increments of 65. There are 12 subsets valid for each $k \pmod {65}$ with the exception of $k\equiv 46 \pmod {65}$.


For diameter $k \equiv 2 \pmod 5$, isomorphism class 2 has between 3 and 5 primitive generating sets, depending on the value of $k$. Generating sets 1 and 2 are defined by formulae valid for every $k\equiv 2 \pmod 5$ and set 3 is comprised of three subsets of formulae valid for $k\equiv 2, 7$ and $12 \pmod {15}$, see  Table \ref {table:112A1}. Set 4 has four subsets valid for $k \equiv 7, 12, 17$ and $22 \pmod {25}$, omitting $k \equiv2 \pmod {25}$, see Table \ref {table:112A2}. Set 5 has 16 subsets valid for each $k \pmod {85}$ with the exception of $k\equiv 18 \pmod {85}$, see Tables \ref {table:112B} and \ref {table:112C}. This replicates exactly the pattern of degree 10 isomorphism classes 0 and 4.


For diameter $k \equiv 3 \pmod 5$, isomorphism class 3 has between 3 and 5 primitive generating sets, depending on the value of $k$. Generating sets 1 and 2 are defined by formulae valid for every $k\equiv 3 \pmod 5$ and set 3 is comprised of three subsets of formulae valid for $k\equiv 3, 8$ and $13 \pmod {15}$, see  Table \ref {table:113A1}. Set 4 has four subsets valid for $k \equiv 3, 8, 13$ and $18 \pmod {25}$, omitting $k \equiv23 \pmod {25}$, see Table \ref {table:113A2}. Set 5 has 16 subsets valid for each $k \pmod {85}$ with the exception of $k\equiv 18 \pmod {85}$, see Tables \ref {table:113B} and \ref {table:113C}. This replicates exactly the pattern of degree 10 isomorphism classes 0 and 4. and degree 11 isomorphism class 2.


For diameter $k \equiv 4 \pmod 5$, isomorphism class 4 has between 2 and 5 primitive generating sets, depending on the value of $k$. Generating sets 1 and 2 are defined by formulae valid for every $k\equiv 4 \pmod 5$, see Table \ref {table:114A}. The formulae for the final three sets repeat with diameter increments of 85, 115 and 235 respectively. Set 3 has 16 subsets valid for each $k \pmod {85}$ with the exception of $k\equiv 4 \pmod {85}$, see Tables \ref {table:114B} and \ref {table:114C}. Set 4 has 22 subsets valid for each $k \pmod {115}$ with the exception of $k\equiv 104 \pmod {115}$, and set 5 has 46 subsets valid for each $k \pmod {235}$ with the exception of $k\equiv 154 \pmod {235}$. For sets 4 and 5 just one example of each is given in Table \ref {table:114D}.


\section {Existence proof for degree 10 circulant graphs of order $L_{CC}(10,k)$ for all diameters $k$}
\label{section:proof10}

In this section we prove the existence of the degree 10 circulant graph of order $L_{CC}(10,k)$ for all diameters $k$. The method of proof closely follows the approach taken by Dougherty and Faber in their proof of the existence of the degree 6 graph of order $L_{CC}(6,k)$ \cite{Dougherty}. For diameter class $k \equiv 0 \pmod5$ all stages of the proof are presented. However resolution of the residual boundary exceptions is only included for the first orthant of the solution space, as an example. The full set of boundary exception resolutions, for all orthants and all diameter classes, runs to several hundred pages. They were resolved and confirmed using a tailored computer program and are available from the author upon request. For all subsequent cases just the specifics are included to avoid unnecessary repetition.

Beforehand we note two theorems by Dougherty and Faber \cite{Dougherty} which will be used in the proofs. For any dimension $f$ we consider $\Z^f$ with the canonical generators $\textbf{e}_i, 1\leq i\leq f$. For any Abelian group $G$ generated by $g_1, ..., g_f$ there is a unique epimorphism from $\Z^f$ onto $G$ which sends $\textbf{e}_i$ to $ g_i$ for all $i$. If $N$ is its kernel, then $G$ is isomorphic to $\Z^f/N$, and the Cayley graph of $G$ with the given generators is isomorphic to the Cayley graph of $\Z^f/N$ with the canonical generators for $\Z^f$. When $G$ is finite the structure of the kernel $N$ is an $f$-dimensional lattice in $\Z^f$. We are interested in the case when $G$ is the cyclic group $\Z_n$ for some $n \in \mathbb{N}$, and so rename the kernel of the corresponding epimorphism as $L_f$. As defined in Section \ref{section:Conjectured}, $S_{f,k}=\{(x_1,...,x_f) \in \Z^f : \vert x_1 \vert +...+\vert x_f \vert \leq k\}$. We have this theorem for circulant graphs of even degree.

\begin{theorem}
(Dougherty and Faber). Let $L_f$ and $S_{f,k}$ be as defined above. If an undirected circulant graph $X$ of order $n$ and degree $2f$ is the Cayley graph for $\Z_n$ with generating set $g_1,...,g_f$, then $X$ has diameter at most $k$ if and only if $S_{f,k}+L_f=\Z^f$.
\label {theoremEvenlattice}
\end{theorem}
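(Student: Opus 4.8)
The plan is to reduce everything to the lattice-quotient picture recalled just above the statement and then to compute the graph distance in that quotient explicitly. Fix the epimorphism $\varphi\colon\Z^f\to\Z_n$ with $\varphi(\mathbf{e}_i)=g_i$ and kernel $L_f$, so that $X$ is isomorphic to the Cayley graph of $\Z^f/L_f$ with connection set $\{\pm(\mathbf{e}_i+L_f):1\le i\le f\}$ (the degree being $2f$, there is no involutory generator to worry about). I would then record the elementary fact that on $\Z^f$ itself the word metric with respect to the generators $\pm\mathbf{e}_1,\dots,\pm\mathbf{e}_f$ coincides with the $\ell^1$ metric: each step changes $|x_1|+\dots+|x_f|$ by exactly $\pm1$, so a shortest word representing $x$ has length exactly $\|x\|_1$, and $S_{f,k}$ is precisely the closed $\ell^1$-ball of radius $k$ about the origin.

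The crux is the distance formula in the quotient: for every $x\in\Z^f$,
\[
 d_X(0,\,x+L_f)=\min\{\,\|x+\ell\|_1 : \ell\in L_f\,\}.
\]
For "$\le$" I would take $\ell\in L_f$ realising the minimum, take an $\ell^1$-geodesic in $\Z^f$ from $0$ to $x+\ell$, and project it down to $\Z^f/L_f$ to obtain a walk of the same length from $0$ to $x+L_f$. For "$\ge$" I would take a geodesic walk $0+L_f=v_0,v_1,\dots,v_m=x+L_f$ in the quotient with $m=d_X(0,x+L_f)$, where each step is $\pm(\mathbf{e}_i+L_f)$, and lift it one step at a time: choosing a representative $\pm\mathbf{e}_i\in\Z^f$ for each step and summing, I get a walk in $\Z^f$ from $0$ to some $x'$ with $x'+L_f=x+L_f$ and $\|x'\|_1\le m$, so the minimum is at most $m$.

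From the distance formula, $d_X(0,x+L_f)\le k$ holds if and only if the coset $x+L_f$ meets $S_{f,k}$, i.e.\ if and only if $x\in S_{f,k}-L_f=S_{f,k}+L_f$ (using that $L_f=-L_f$). Since a Cayley graph is vertex-transitive, the diameter of $X$ equals the eccentricity of the identity vertex, so $\operatorname{diam}(X)\le k$ if and only if every coset lies within distance $k$ of $0$, which by the previous sentence is equivalent to $S_{f,k}+L_f=\Z^f$. This is exactly the claimed equivalence.

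The argument is essentially bookkeeping once the quotient description of $X$ is in hand; the only step that needs genuine care — and the one I would regard as the heart of the matter — is the distance formula, specifically its "$\ge$" direction, where one must verify that lifting a walk in $\Z^f/L_f$ to $\Z^f$ does not increase its $\ell^1$-length. The remaining ingredients (vertex-transitivity reducing diameter to eccentricity at the identity, and the reformulation of "coset meets $S_{f,k}$" as "$x\in S_{f,k}+L_f$") are routine.
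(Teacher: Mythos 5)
Your proof is correct. Note, however, that the paper does not prove this statement at all: it is quoted as a known result of Dougherty and Faber (reference \cite{Dougherty}) and used as a black box in Section \ref{section:proof10}, so there is no in-paper argument to compare against. Your argument is the standard one and fills the gap soundly: the identification of the word metric on $\Z^f$ with the $\ell^1$ metric, the quotient distance formula $d_X(0,x+L_f)=\min_{\ell\in L_f}\lVert x+\ell\rVert_1$ (with the lifting direction correctly identified as the step needing care), and the reduction of the diameter to the eccentricity of the identity via vertex-transitivity are all valid. The one hypothesis you use tacitly in the projection step is that the degree is exactly $2f$, which guarantees that the $2f$ elements $\pm g_i$ are distinct and nonzero, so that each step $\pm\mathbf{e}_i$ of a path in $\Z^f$ projects to a genuine edge of $X$; it would be worth making that explicit, but it is part of the theorem's hypotheses and does not affect correctness.
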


Let $\{\textbf{v}_1,\ldots,\textbf{v}_f\}$ be a set of linearly independent vectors in $\Z^f$ that generates the lattice $L_f$, and let $M$ be the $f\times f$ matrix $(\textbf{v}_1,\ldots,\textbf{v}_f)^\textnormal{T}$. Then $M$ is called the lattice generating vectors matrix for the lattice $L_f$, and we have $|G|=|\Z^f/L_f|=|\textnormal{det}M|\le|S_{f,k}|$.

Now consider the case where $X$ is a circulant graph of odd degree $2f+1$, so that its connection set includes the involution $g_m=n/2$. Again let $\textbf{v}_1, \dots, \textbf{v}_f$ be the lattice generating vectors for $L_f$, the lattice corresponding to $X$. Also let $\textbf{v}_m$ be an extra vector associated with the involutory generator $g_m$, defined by $\textbf{v}_m=\frac{1}{2}\sum^f_{i=1}\textbf{v}_i$, so that $2\times\textbf{v}_m\in L_f$. The shortest path from an arbitrary vertex to any other vertex either excludes or includes a single occurrence of the involution $g_m$. This leads to an equivalent theorem for circulant graphs of odd degree.

\begin{theorem}
(Dougherty and Faber) Let $L_f$, $S_{f,k}$ and $\textbf{v}_i$ be as defined above. If an undirected circulant graph $X$ of order $n$ and degree $2f+1$ is the Cayley graph for $\Z_n$ with generating set $g_1,...,g_f$, then $X$ has diameter at most $k$ if and only if
$(S_{f,k}+L_f) \cup (S_{f,k-1}+\textbf{v}_m+L_f)=\Z^f$.
\label {theoremOddlattice}
\end{theorem}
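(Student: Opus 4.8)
The plan is to derive Theorem~\ref{theoremOddlattice} from Theorem~\ref{theoremEvenlattice} by tracking the role of the involutory generator $g_m=n/2$ in shortest paths. Write $\phi\colon\Z^f\to\Z_n$ for the canonical epimorphism with $\phi(\textbf{e}_i)=g_i$, so $L_f=\ker\phi$. The first task is to identify $\phi(\textbf{v}_m)$. Since $2\textbf{v}_m=\sum_{i=1}^f\textbf{v}_i\in L_f$, the element $\phi(\textbf{v}_m)$ has order $1$ or $2$ in $\Z_n$; it cannot be $0$, since $\textbf{v}_m\in L_f$ would give $\tfrac12\sum_i\textbf{v}_i=\sum_i a_i\textbf{v}_i$ with $a_i\in\Z$, forcing $a_i=\tfrac12$ and contradicting the linear independence of the $\textbf{v}_i$. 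Hence $\phi(\textbf{v}_m)=n/2=g_m$. The same computation applied to any other basis $\{\textbf{v}_i'\}$ of $L_f$ shows $\phi(\textbf{v}_m')=g_m$ as well, so $\textbf{v}_m'-\textbf{v}_m\in L_f$ and the coset $\textbf{v}_m+L_f$, hence the set $S_{f,k-1}+\textbf{v}_m+L_f$, does not depend on the chosen basis; the statement is thus well posed.

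Next I would reduce ``diameter at most $k$'' to a reachability statement at the vertex $0$. As $X$ is vertex-transitive, $\operatorname{diam}(X)\le k$ if and only if every $t\in\Z_n$ is at distance at most $k$ from $0$. In any walk from $0$ to $t$, deleting all occurrences of $g_m$ when their number is even, or all but one when it is odd, leaves the endpoint unchanged (because $2g_m=0$) and does not increase the length; so a shortest walk from $0$ to $t$ uses $g_m$ either zero or one time. Consequently $d(0,t)\le k$ if and only if $t$ is the image under $\phi$ of a word of length $\le k$ in $\pm\textbf{e}_1,\dots,\pm\textbf{e}_f$ (no involution used), i.e. $t\in\phi(S_{f,k})$, or else $t-g_m$ is the image of such a word of length $\le k-1$, i.e. $t-g_m\in\phi(S_{f,k-1})$.

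The last step rewrites these two alternatives in terms of the lattice, using the same translation as in the proof of Theorem~\ref{theoremEvenlattice}. Fix any $\textbf{y}\in\phi^{-1}(t)$. Then $t\in\phi(S_{f,k})$ is equivalent to $(\textbf{y}+L_f)\cap S_{f,k}\neq\emptyset$, and since $L_f=-L_f$ this says $\textbf{y}\in S_{f,k}+L_f$. Similarly, using $g_m=\phi(\textbf{v}_m)$, the condition $t-g_m\in\phi(S_{f,k-1})$ is equivalent to $\textbf{y}-\textbf{v}_m\in S_{f,k-1}+L_f$, that is $\textbf{y}\in S_{f,k-1}+\textbf{v}_m+L_f$. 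As $\phi$ is surjective, letting $t$ run over $\Z_n$ is the same as letting $\textbf{y}$ run over $\Z^f$; hence $\operatorname{diam}(X)\le k$ if and only if $(S_{f,k}+L_f)\cup(S_{f,k-1}+\textbf{v}_m+L_f)=\Z^f$, which is the claimed identity.

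I expect the only delicate points to be those in the first two steps: the verification that $\phi(\textbf{v}_m)=g_m$ together with the resulting independence of $\textbf{v}_m+L_f$ from the lattice basis, and the combinatorial observation that a minimal walk uses the involution at most once. Both are elementary, and once they are in place the remainder is an exact parallel of the even-degree argument, so I anticipate no further obstacle.
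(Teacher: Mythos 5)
Your proof is correct and follows essentially the approach the paper indicates: the paper states this result only with a citation to Dougherty and Faber together with the single key observation that a shortest path uses the involution $g_m$ at most once, and your three steps (showing $\phi(\textbf{v}_m)=g_m$, the at-most-one-occurrence reduction of walks, and the coset translation exactly parallel to Theorem~\ref{theoremEvenlattice}) are precisely the standard elaboration of that remark. The one point worth flagging is that the integrality of $\textbf{v}_m=\tfrac12\sum_i\textbf{v}_i$ is not automatic for an arbitrary basis of $L_f$, but that is a gap in the paper's own setup rather than in your argument, which correctly only needs some integer vector mapping to $n/2$.
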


In this case, with $M=(\textbf{v}_1,\ldots,\textbf{v}_f)^\textnormal{T}$, we have $|G|=|\Z^f/L_f|=|\textnormal{det}M|\le|S_{f,k}|+|S_{f,k-1}|$.


\subsection {Existence proof for degree 10 circulant graphs of order $L_{CC}(10,k)$ for all diameters $k \equiv 0 \pmod 5$}

First we prove the existence of the degree 10 circulant graph of order $L_{CC}(10,k)$ for all diameters $k \equiv 0\pmod 5$, with generating set 1 of Table \ref {table:100A1}.

\begin{theorem}
For all $k\equiv 0 \pmod 5$, there is an undirected Cayley graph on five generators of a cyclic group which has diameter k and order $L_{CC}(10,k)$, where
\[ L_{CC}(10,k)=(512k^5+1280k^4+6400k^3+8000k^2+6250k+3125)/3125. \\
\]

Moreover, a generating set is
$\{1,
(32k^3+40k^2+300k+125)/125,
(128k^4+160k^3+1200k^2-625)/625,
(128k^4+320k^3+1600k^2+1750k+1250)/625,
(256k^4+640k^3+3000k^2+3750k+1250)/625\}$.

\label{theorem:100A}
\end{theorem}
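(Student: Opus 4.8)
The plan is to apply Theorem~\ref{theoremEvenlattice} with $f=5$. Write $n=L_{CC}(10,k)$, let $g_1,\dots,g_5$ be the five elements displayed, and let $L_5\subseteq\Z^5$ be the kernel of the epimorphism $\phi\colon\Z^5\to\Z_n$ that sends $\textbf{e}_i\mapsto g_i$. Because $g_1=1$, $\phi$ is surjective, so $\Z^5/L_5\cong\Z_n$ and hence $|\det M|=|\Z^5/L_5|=n$ for every integer basis $M$ of $L_5$; along the way one records the routine facts that each $g_i$ is a positive integer (using $5\mid k$), that the $g_i$ are pairwise distinct, and that $0<g_i<n/2$ for all $k\ge 5$, so that $\{g_1,\dots,g_5\}$ is a primitive canonical generating set of an undirected $10$-regular circulant graph of order $n$. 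By Theorem~\ref{theoremEvenlattice} it then remains to prove the covering $S_{5,k}+L_5=\Z^5$ (which gives diameter at most $k$) and, separately, that the diameter is at least $k$.

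For the covering step it is essential to replace the obvious but very long basis of $L_5$ (namely $n\textbf{e}_1$ together with $\textbf{e}_i-g_i\textbf{e}_1$ for $2\le i\le 5$) by a \emph{reduced} one. I would pass to the parameter $a=4k/5$, an integer since $k\equiv 0\pmod 5$, in which $n$ and the $g_i$ become polynomials of lower degree, and then write down five lattice vectors $\textbf{v}_1,\dots,\textbf{v}_5$ of $\ell^1$-norm $O(k)$, obtained following the reduction approach used by Dougherty and Faber for degree~$6$ and by the author for degrees~$8$ and~$9$. Two checks are then purely mechanical polynomial identities in $k$: that $\sum_{i=1}^5(\textbf{v}_j)_i\,g_i\equiv 0\pmod n$ for each $j$, so $\textbf{v}_j\in L_5$; and that the $5\times5$ matrix $M$ with rows $\textbf{v}_1,\dots,\textbf{v}_5$ satisfies $|\det M|=n$, which together with $\textbf{v}_j\in L_5$ forces $\{\textbf{v}_1,\dots,\textbf{v}_5\}$ to be a basis of $L_5$.

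The substance of the argument is the covering $S_{5,k}+L_5=\Z^5$, i.e.\ that the $L_5$-translates of the radius-$k$ Lee sphere exhaust $\Z^5$. Since $n=L_{CC}(10,k)<M_{AC}(10,k)=|S_{5,k}|$, the sphere is strictly larger than a fundamental domain of $L_5$, so this is a covering with slack rather than an exact tiling. Following the Dougherty--Faber scheme I would fix an explicit fundamental region $F$ for $L_5$, built as a union of integer boxes adapted to the reduced basis so that $F$ is a transversal of $\Z^5/L_5$; exploit the sign-change and coordinate-permutation symmetries of $S_{5,k}$ to cut down to the first orthant; and exhibit, for every $\textbf{z}$ in the interior bulk of $F$, an explicit lattice vector $\textbf{u}$ with $\|\textbf{z}+\textbf{u}\|_1\le k$, given by a closed formula in the coordinates of $\textbf{z}$ and in $k$. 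What remains is a family of residue classes lying near the boundary of the translated spheres --- the residual boundary exceptions --- whose number grows polynomially in $k$ but which fall into finitely many patterns indexed by $k\bmod 5$ and by the orthant, and which are settled by a tailored computer program. I expect this boundary casework to be by far the most delicate and voluminous part of the proof --- it is where the ``several hundred pages'' live --- while the generic covering step and the algebraic identities above are routine.

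Finally, to see that the diameter equals $k$ and is not smaller, I would exhibit a single residue class of $\Z^5/L_5$ with no representative of $\ell^1$-norm at most $k-1$, equivalently a vertex at distance exactly $k$: for small $k$ this is immediate from $n>M_{AC}(10,k-1)=|S_{5,k-1}|$, and for large $k$ it falls out of the same boundary analysis, since any class covered only through the outermost shell $S_{5,k}\setminus S_{5,k-1}$ witnesses diameter $k$. Combining $|\det M|=n$, the covering $S_{5,k}+L_5=\Z^5$, and this lower bound, Theorem~\ref{theoremEvenlattice} delivers a circulant graph of order $n$ and diameter $k$; tracing the construction back through $\Z^5/L_5\cong\Z_n$ recovers the generating set $\{1,g_2,g_3,g_4,g_5\}$ in the statement.
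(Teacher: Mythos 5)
Your overall strategy --- invoking Theorem \ref{theoremEvenlattice} with $f=5$, passing to the integer parameter $a$, producing a reduced basis of the kernel lattice whose determinant equals $n$, verifying that the quotient is cyclic, and proving the covering $S_{5,k}+L_5=\Z^5$ with the residual boundary exceptions handled case by case --- is exactly the paper's (the paper merely works in the opposite direction, positing the five vectors $\textbf{v}_1,\dots,\textbf{v}_5$ first and reading the generators off the relations $\textbf{e}_i=g_i\textbf{e}_1$ in $\Z^5/L_k$). But as written the proposal defers all of the actual content: the five reduced vectors, the eleven further orthant representatives $\textbf{v}_6,\dots,\textbf{v}_{16}$, and the case analysis are promised rather than produced, and the theorem lives entirely in those data. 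Two steps of the plan are also wrong as stated. You cannot ``cut down to the first orthant'' using the coordinate-permutation symmetries of $S_{5,k}$: the lattice $L_5$ is centrosymmetric but not permutation-invariant, so the only legitimate reduction is from $32$ orthants to $16$, which is what the paper does. And the paper's covering mechanism is not a fundamental-domain argument: it iteratively subtracts an orthant-matched $\pm\textbf{v}_i$ until every coordinate has absolute value at most $a+1$, and then exploits the fact that all $32$ orthant representatives have $\ell^1$-norm exactly $5a+1=2k+1$, so that any reduced point lying \emph{between} $\textbf{0}$ and such a $\textbf{v}$ is within distance $k$ of one of them; the exceptional cases are precisely the reduced points that are not between. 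Your sketch omits this norm-$(2k+1)$ splitting device, which is the engine of the generic case.

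On the diameter lower bound, your counting argument $n>|S_{5,k-1}|$ fails for all $k\ge 15$ in this class: $|S_{5,k-1}|\sim\frac{4}{15}k^5$ while $n\sim\frac{512}{3125}k^5$, and $\frac{512}{3125}<\frac{4}{15}$; concretely $|S_{5,14}|=177045>152671=L_{CC}(10,15)$. So beyond $k=10$ the lower bound does not follow from counting, and nothing you have set up supplies it. (The paper itself only proves diameter at most $k$ and is silent on the reverse inequality, so you are not behind the paper here, but the claim as you state it is false.)
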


\begin{proof}
We will show the existence of five-dimensional lattices $L_k \subseteq \Z^5$ such that $\Z^5/L_k$ is cyclic, $S_{5,k}+L_k=\Z^5$, where $S_{5,k}$ is the set of points in $\Z^5$ at a distance at most $k$ from the origin under the $\ell^1$ (Manhattan) metric, and $\vert \Z^5 : L_k\vert = L_{CC}(10,k) $ as specified in the theorem. Then, by Theorem \ref {theoremD}, the resultant Cayley graph has diameter at most $k$.

Let $a= 2k/5$, and let $L_k$ be defined by five generating vectors as follows:
\[
\begin{array}{rcl}
\textbf{v}_1&=&(a+1,-a,-a,a,a)\\
\textbf{v}_2&=&(a,-a-1,-a,a,-a)\\
\textbf{v}_3&=&(-a,a,-a,-a-1,-a)\\
\textbf{v}_4&=&(a+1,-a,-a+1,-a-1,-a)\\
\textbf{v}_5&=&(a-1,a+1,a+1,-a+1,-a-1)\\
\end{array}
\] 
Then the following vectors are in $L_k$:
\newline
$(2a^2+2a+1)\textbf{v}_1 - (2a-1)\textbf{v}_2 + 2a\textbf{v}_4 + 2a^2\textbf{v}_5 = (4a^3+2a^2+6a+1, -1, 0, 0, 0),$
\newline
$(4a^3+4a^2+2a)\textbf{v}_1 - (4a^2-2a)\textbf{v}_2 + \textbf{v}_3 +(4a^2-1)\textbf{v}_4 +4a^3\textbf{v}_5 = (8a^4+4a^3+12a^2-1, 0, -1, 0, 0),$
\newline
$(4a^3+6a^2+5a+2)\textbf{v}_1 - 4a^2\textbf{v}_2 + \textbf{v}_3 +(4a^2+2a)\textbf{v}_4 + (4a^3+2a^2+a)\textbf{v}_5 = (8a^4+8a^3+16a^2+7a+2, 0, 0, -1, 0),$
\newline
$(8a^3+12a^2+9a+4)\textbf{v}_1 - (8a^2-1)\textbf{v}_2 + 2\textbf{v}_3 +(8a^2+4a-1)\textbf{v}_4 + (8a^3+4a^2+a+1)\textbf{v}_5 = (16a^4+16a^3+30a^2+15a+2, 0, 0, 0, -1)$
\newline

Hence we have $\textbf{e}_2 =  (4a^3+2a^2+6a+1)\textbf{e}_1,  \textbf{e}_3 =  (8a^4+4a^3+12a^2-1)\textbf{e}_1, \textbf{e}_4 =  (8a^4+8a^3+16a^2+7a+2)\textbf{e}_1$,     and $\textbf{e}_5 =  (16a^4+16a^3+30a^2+15a+2)\textbf{e}_1$ in $\Z^5/L_k$, and so $\textbf{e}_1$ generates $\Z^5/L_k$.

Also det $\left ( \begin{array} {c} \textbf{v}_1 \\ \textbf{v}_2 \\ \textbf{v}_3 \\ \textbf{v}_4 \\ \textbf{v}_5 \end {array} \right ) = $ det
$\left (
\begin{array} {l r r r r}
16a^5+16a^4+32a^3+16a^2+5a+1 & 0 & 0 & 0 & 0 \\
4a^3+2a^2+6a+1 & -1 & 0 & 0 & 0 \\
8a^4+4a^3+12a^2-1 & 0 & -1 & 0 & 0 \\
8a^4+8a^3+16a^2+7a+2 & 0 & 0 & -1 & 0 \\
16a^4+16a^3+30a^2+15a+2 & 0 & 0 & 0 & -1  
\end {array} \right )$ 
\newline
\newline
$= 16a^5+16a^4+32a^3+16a^2+5a+1 = (512k^5+1280k^4+6400k^3+8000k^2+6250k+3125)/3125= L_{CC}(10,k)$, as in the statement of the theorem. 

Thus $\Z^5/L_k$ is isomorphic to $\Z_{L_{CC}(10,k)}$ via an isomorphism taking $\textbf{e}_1, \textbf{e}_2, \textbf{e}_3, \textbf{e}_4, \textbf{e}_5$ respectively to $1, 4a^3+2a^2+6a+1, 8a^4+4a^3+12a^2-1,  8a^4+8a^3+16a^2+7a+2, 16a^4+16a^3+30a^2+15a+2 $. As $a=2k/5$ this gives the generating set specified in the theorem: $\{1,
(32k^3+40k^2+300k+125)/125,
(128k^4+160k^3+1200k^2-625)/625,
(128k^4+320k^3+1600k^2+1750k+1250)/625,
(256k^4+640k^3+3000k^2+3750k+1250)/625\}$.

It remains to show that $S_{5,k}+L_k=\Z^5$. For $k=5$, it is straightforward to show directly that $\Z_{1099}$ with generators $1, 53, 207, 272, 536$ has diameter 5. So we assume $k\geq10$, so that $a \geq 4$. Now let
\[
\begin{array}{rll}
\textbf{v}_6&=\textbf{v}_1-\textbf{v}_2+\textbf{v}_3&=(-a+1,a+1,-a,-a-1,a)\\
\textbf{v}_7&=\textbf{v}_1-\textbf{v}_2+\textbf{v}_4&=(a+2,-a+1,-a+1,-a-1,a)\\
\textbf{v}_8&=\textbf{v}_1-\textbf{v}_2+\textbf{v}_5&=(a,a+2,a+1,-a+1,a-1)\\
\textbf{v}_9&=\textbf{v}_1+\textbf{v}_3-\textbf{v}_4&=(-a,a,-a-1,a,a)\\
\textbf{v}_{10}&=\textbf{v}_1+\textbf{v}_3+\textbf{v}_5&=(a,a+1,-a+1,-a,-a-1)\\
\textbf{v}_{11}&=\textbf{v}_1-\textbf{v}_4+\textbf{v}_5&=(a-1,a+1,a,a+2,a-1)\\
\textbf{v}_{12}&=\textbf{v}_2+\textbf{v}_3-\textbf{v}_4&=(-a-1,a-1,-a-1,a,-a)\\
\textbf{v}_{13}&=\textbf{v}_2-\textbf{v}_4+\textbf{v}_5&=(a-2,a,a,a+2,-a-1)\\
\textbf{v}_{14}&=\textbf{v}_2+\textbf{v}_5+\textbf{v}_9&=(a-1,a,-a,a+1,-a-1)\\
\textbf{v}_{15}&=\textbf{v}_1+\textbf{v}_5+\textbf{v}_9&=(a,a+1,-a,a+1,a-1)\\
\textbf{v}_{16}&=\textbf{v}_1+\textbf{v}_5+\textbf{v}_6&=(a+1,a+2,-a+1,-a,a-1).
\end{array}
\]
Then the 32 vectors $\pm\textbf{v}_i$ for $i=1,...,16$ provide one element of $L_k$ lying strictly within each of the 32 orthants of $\Z^5$. Many of the coordinates of these vectors have absolute value at most $a+1$. Only $\pm \textbf{v}_7,\pm \textbf{v}_8,  \pm \textbf{v}_{11}, \pm \textbf{v}_{13}, $ and $\pm \textbf{v}_{16}$ each have one coordinate with absolute value equal to $a+2$.

We must show that each $\textbf{x}\in \Z^5$ is in $S_{5,k}+L_k$, which means that for any $\textbf{x} \in \Z^5$ we need to find a $\textbf{w} \in L_k$ such that $\textbf{x}-\textbf{w} \in S_{5,k}$. However $\textbf{x}-\textbf{w} \in S_{5,k}$ if and only if $\delta (\textbf{x},\textbf{w})\leq k$, where $\delta$ is the $l^1$ metric on $\Z^5$.
If $\textbf{x}, \textbf{y}, \textbf{z} \in \Z^5$ and each coordinate of $\textbf{y}$ lies between the corresponding coordinate of $\textbf{x}$ and $\textbf{z}$ or is equal to one of them, then $\delta (\textbf{x},\textbf{y})+\delta(\textbf{y},\textbf{z})=\delta(\textbf{x},\textbf{z})$. In such a case we say that  \textquotedblleft $\textbf{y}$ lies between $\textbf{x}$ and $\textbf{z}$\textquotedblright.

For any $\textbf{x}\in \Z^5$, we reduce $\textbf{x}$ by adding appropriate elements of $L_k$ until the resulting vector lies within $l^1$-distance $k$ of $\textbf{0}$ or some other element of $L_k$.
The first stage is to reduce $\textbf{x}$ to a vector whose coordinates all have absolute value at most $a+1$. If $\textbf{x}$ has a coordinate with absolute value above $a+1$, then let $\textbf{v}$ be one of the vectors $\pm \textbf{v}_i(1 \leq i \leq 16)$ such that the coordinates of  $\textbf{v}$ have the same sign as the corresponding coordinates of $\textbf{x}$. If a coordinate of $\textbf{x}$ is 0 then either sign is allowed for $\textbf{v}$ as long as the corresponding coordinate of $\textbf{v}$ has absolute value $\leq a+1$.
So if $\textbf{x}$ lies in the orthant of $\textbf{v}_8$ and its $\textbf{e}_2$ coordinate is 0 then we take $\textbf{v}_{12}$ instead.
If $\textbf{x}$ lies in the orthant of $\textbf{v}_{11}$ and its $\textbf{e}_4$ coordinate is 0 then we take $\textbf{v}_8$ instead, unless its $\textbf{e}_2$ coordinate is also 0, in which case we take $\textbf{v}_{12}$ instead.
If $\textbf{x}$ lies in the orthant of $\textbf{v}_{13}$ and its $\textbf{e}_4$ coordinate is 0 then we take $\textbf{v}_5$ instead.
If $\textbf{x}$ lies in the orthant of $\textbf{v}_7$ and its $\textbf{e}_1$ coordinate is 0 then we take $-\textbf{v}_{13}$ instead, unless its $\textbf{e}_4$ coordinate is also 0, in which case we take $-\textbf{v}_5$ instead, as above.
If $\textbf{x}$ lies in the orthant of $\textbf{v}_{16}$ and its $\textbf{e}_2$ coordinate is 0 then we consider $\textbf{v}_7$ instead, with the provisos stated above.

Now consider $\textbf{x}'=\textbf{x}-\textbf{v}$. If a coordinate of $\textbf{x}$ has absolute value $s, 1\leq s\leq a+1$, then the corresponding coordinate of $\textbf{x}'$ will have absolute value $s'\leq a+1$ because of the sign matching and the fact that the coordinates of $\textbf{v}$ have absolute value $\leq a+2$. If a coordinate of $\textbf{x}$ has absolute value $s=0$, then as indicated above, the corresponding value of $\textbf{x}'$ will have absolute value $s' \leq a+1$ because $\textbf{v}$ is chosen such that the corresponding coordinate has absolute value $\leq a+1$. If a coordinate of $\textbf{x}$ has absolute value $s>a+1$, then the corresponding coordinate of $\textbf{x}'$ will be strictly smaller in absolute value. Therefore repeating this procedure will result in a vector whose coordinates all have absolute value at most $a+1$.

If the resulting vector $\textbf{x}'$ lies between $\textbf{0}$ and $\textbf{v}$, where $\textbf{v}=\pm\textbf{v}_i$ for some $i$, then we have $\delta(\textbf{0},\textbf{x}')+\delta(\textbf{x}',\textbf{v})=\delta(\textbf{0},\textbf{v})$. All of the vectors $\textbf{v}$ satisfy $\delta(\textbf{0},\textbf{v})=5a+1$, so we have $\delta(\textbf{0},\textbf{v})=2k+1$. Since $\delta(\textbf{0},\textbf{x}')$ and $\delta(\textbf{x}',\textbf{v})$ are both non-negative integers, one of them must be at most $k$, so that $\textbf{x}' \in S_{5,k}+L_k$. Hence we also have $\textbf{x} \in S_{5,k}+L_k$ as required.

Now we are left with the case where the absolute value of each coordinate of the reduced $\textbf{x}$ is at most $a+1$, and $\textbf{x}$ is in the orthant of $\textbf{v}$, where $\textbf{v} = \pm \textbf{v}_i$ for some $i \leq 16$ but does not lie between $\textbf{0}$ and $\textbf{v}$.
Since $L_k$ is centrosymmetric we only need to consider the 16 orthants containing $\textbf{v}_1, ..., \textbf{v}_{16}$.
To avoid this paper being unduly long just one set of exceptions, for the orthant of $\textbf{v}_1$, are included here as an example. A full listing of the resolution of the exceptions for all 16 orthants is available from the author on request.


Suppose that $\textbf{x}$ lies within the orthant of $\textbf{v}_1$, but not between $\textbf{0}$ and $\textbf{v}_1$. Then as $\textbf{v}_1 = (a+1,-a,-a,a,a)$, the second or third coordinate of $\textbf{x}$ is equal to $-a-1$ or the fourth or fifth coordinate equals $a+1$ or any combination of these. We now distinguish 15 cases.

Case 1: $\textbf{x}=(r,-a-1,-t,u,v)$ where $0\leq s\leq a+1$ and $0 \leq t,u,v \leq a$.
Let $\textbf{x}' = \textbf{x} - \textbf{v}_1 =( r-a-1, -1, a-t, u-a, v-a)$, which lies between $\textbf{0}$ and $-\textbf{v}_{15}$ unless $r = 0$, in which case let $\textbf{x}'' = \textbf{x}'+\textbf{v}_{15}=(-1, a, -t, u+1, v-1)$, which lies between $\textbf{0}$ and $\textbf{v}_9$ unless $u=a$ or $v=0$.
If $v=0$ then $\textbf{x}$ lies between $\textbf{0}$ and $\textbf{v}_2$.
If $u=a$ and $v \geq 1$ then $\textbf{x}''' = \textbf{x}''-\textbf{v}_9=(a-1, 0, a+1-t, 1, v-a-1)$
which lies between $\textbf{0}$ and $-\textbf{v}_6$ unless $t\leq 1$, in which case
let $\textbf{x}'''' = \textbf{x}'''+\textbf{v}_6=(0, a+1, 1-t, -a, v-1)$
which lies between $\textbf{0}$ and $-\textbf{v}_2$.

Case 2: $\textbf{x}=(r,-s,-a-1,u,v)$ where $0\leq r\leq a+1$ and $0 \leq s,u,v \leq a$.
Let $\textbf{x}' = \textbf{x} - \textbf{v}_1 = (r-a-1,a-s, -1, u-a, v-a)$, which lies between $\textbf{0}$ and $\textbf{v}_3$ unless $r = 0$, in which case
\textbf{x} lies between $\textbf{0}$ and $-\textbf{v}_5$ unless $u=a$, in which case
let $\textbf{x}'' = \textbf{x}+\textbf{v}_5=( a-1, a+1-s, 0, 1, v-a-1)$
which lies between $\textbf{0}$ and $\textbf{v}_{14}$, unless $s=0$, in which case
\textbf{x} lies between $\textbf{0}$ and $\textbf{v}_9$.

Case 3: $\textbf{x}=(r,-s,-t,a+1,v)$ where $0\leq r\leq a+1$ and $0 \leq s,t,v \leq a$.
Let $\textbf{x}' = \textbf{x} - \textbf{v}_1 = (r-a-1,a-s,a-t,1,v-a)$, which lies between $\textbf{0}$ and $-\textbf{v}_7$ unless $s = 0$ or $t = 0$.
Let $\textbf{x}'' = \textbf{x}'+\textbf{v}_7=(1+r,1-s,1-t,-a,v)$.
If $t=0$ then $\textbf{x}$ lies between $\textbf{0}$ and $-\textbf{v}_3$ unless $r \geq a$, in which case
let $\textbf{x}'''=\textbf{x}+\textbf{v}_3 = (r-a,a-s,-a,0,v-a)$
which lies between $\textbf{0}$ and $\textbf{v}_{14}$.
If $s=0$ and $t \geq 1$ then $\textbf{x}''$ lies between $\textbf{0}$ and $\textbf{v}_{16}$
unless $r=a+1$ or $v=a$.
If $v=a$ then $\textbf{x}'$ lies between $\textbf{0}$ and $-\textbf{v}_4$.
If $r=a+1$ and $v \leq a$ then
let $\textbf{x}''''=\textbf{x}''-\textbf{v}_{16} = (r-a,-a-1,a-t,0,v-a+1)$
which lies between $\textbf{0}$ and $-\textbf{v}_6$.

Case 4: $\textbf{x}=(r,-s,-t,u,a+1)$ where $0\leq r\leq a+1$ and $0 \leq s,t,u \leq a$.
Let $\textbf{x}' = \textbf{x} - \textbf{v}_1 = (r-a-1,a-s,a-t,u-a,1)$, which lies between $\textbf{0}$ and $-\textbf{v}_2$ unless $r = 0$, in which case
let $\textbf{x}''=\textbf{x}+\textbf{v}_2 = (1,-s-1,-t,u,1-a)$
which lies between $\textbf{0}$ and $-\textbf{v}_8$
unless $u=a$, in which case 
let $\textbf{x}'''=\textbf{x}''+\textbf{v}_8 = (a-1,a+1-s,a+1-t,1,0)$
which lies between $\textbf{0}$ and $\textbf{v}_{11}$
unless $t=0$, in which case 
$\textbf{x}$ lies between $\textbf{0}$ and $-\textbf{v}_{10}$.

Case 5: $\textbf{x}=(r,-a-1,-a-1,u,v)$ where $0\leq r\leq a+1$ and $0 \leq u,v \leq a$.
Let $\textbf{x}' = \textbf{x} - \textbf{v}_1 = (r-a-1,-1,-1,u-a,v-a)$, which lies between $\textbf{0}$ and $-\textbf{v}_{11}$ unless $r \leq 1$ or $v = 0$.
Let $\textbf{x}''=\textbf{x}'+\textbf{v}_{11} = (r-2,a,a-1,u+2,v-1)$.
If $r \leq 1$ and $v \geq 1$ then $\textbf{x}''$ lies between $\textbf{0}$ and $-\textbf{v}_4$
unless $u=a$, in which case 
let $\textbf{x}'''=\textbf{x}''+\textbf{v}_4 = (r+a-1,0,0,1,v-a-1)$
which lies between $\textbf{0}$ and $\textbf{v}_2$.
If $r \leq 1$ and $v=0$ then
let $\textbf{x}''''=\textbf{x}''+\textbf{v}_7 = (r+a,1,0,u-a+1,a-1)$
which lies between $\textbf{0}$ and $-\textbf{v}_4$
unless $u=a$, in which case 
let $\textbf{x}'''=\textbf{x}''+\textbf{v}_4 = (r+a-1,0,0,1,v-a-1)$
which lies between $\textbf{0}$ and $\textbf{v}_{16}$.
unless $u=a$, in which case
let $\textbf{x}^v=\textbf{x}''''-\textbf{v}_{16} = (r-1,-a-1,a-1,a+1,0)$
which lies between $\textbf{0}$ and $-\textbf{v}_6$
unless $r=0$, in which case
let $\textbf{x}^{vi}=\textbf{x}^v+\textbf{v}_6 = (-a,0,-1,0,a)$
which lies between $\textbf{0}$ and $\textbf{v}_9$.
If $r \geq 2$ and $v=0$ then
$\textbf{x}''$ lies between $\textbf{0}$ and $\textbf{v}_{13}$
unless $r=a+1$, in which case 
$\textbf{x}'$ lies between $\textbf{0}$ and $\textbf{v}_4$.

Case 6: $\textbf{x}=(r,-a-1,-t,a+1,v)$ where $0\leq r\leq a+1$ and $0 \leq t,v \leq a$.
Let $\textbf{x}' = \textbf{x} - \textbf{v}_1 = (r-a-1,-1,a-t,1,v-a)$, which lies between $\textbf{0}$ and $-\textbf{v}_{16}$ unless $t=0$ or $v=0$.
Let $\textbf{x}''=\textbf{x}'+\textbf{v}_{16} = (r,a+1,1-t,1-a,v-1)$.
If $t=0$ and $v=0$ then $\textbf{x}''$ lies between $\textbf{0}$ and $\textbf{v}_5$
unless $r\geq a$, in which case 
let $\textbf{x}'''=\textbf{x}''-\textbf{v}_5 = (r-a+1,0,-a,0,a)$
which lies between $\textbf{0}$ and $\textbf{v}_1$.
If $t \geq 1$ and $v=0$ then $\textbf{x}''$ lies between $\textbf{0}$ and $\textbf{v}_{10}$
unless $r=a+1$, in which case 
let $\textbf{x}'''=\textbf{x}''-\textbf{v}_{10} = (1,0,a-t,1,a)$
which lies between $\textbf{0}$ and $-\textbf{v}_4$.
If $t=0$ and $v \geq 1$ then $\textbf{x}''$ lies between $\textbf{0}$ and $\textbf{v}_8$
unless $r=a+1$, in which case 
let $\textbf{x}^v=\textbf{x}''-\textbf{v}_8 = (1,-1,-a,0,v-a)$
which lies between $\textbf{0}$ and $\textbf{v}_2$.

Case 7: $\textbf{x}=(r,-a-1,-t,u,a+1)$ where $0\leq r\leq a+1$ and $0 \leq t,u \leq a$.
Let $\textbf{x}' = \textbf{x} - \textbf{v}_1 = (r-a-1,-1,a-t,u-a,1)$, which lies between $\textbf{0}$ and $-\textbf{v}_{14}$ unless $r \leq 1$, in which case
let $\textbf{x}''=\textbf{x}'+\textbf{v}_{14} = (r-2,a-1,-t,u+1,-a)$
which lies between $\textbf{0}$ and $-\textbf{v}_{12}$
unless $u=a$, in which case 
let $\textbf{x}'''=\textbf{x}''-\textbf{v}_{12} = (r+a-1,0,a+1-t,1,0)$
which lies between $\textbf{0}$ and $-\textbf{v}_3$
unless $t=0$, in which case 
let $\textbf{x}''''=\textbf{x}'''+\textbf{v}_3 = (r-1,a,1,-a,-a)$
which lies between $\textbf{0}$ and $-\textbf{v}_2$.

Case 8: $\textbf{x}=(r,-s,-a-1,a+1,v)$ where $0\leq r\leq a+1$ and $0 \leq s,v \leq a$.
Let $\textbf{x}' = \textbf{x} - \textbf{v}_1 = (r-a-1,a-s,-1,1,v-a)$, which lies between $\textbf{0}$ and $\textbf{v}_{12}$ unless $s=0$, in which case
let $\textbf{x}''=\textbf{x}'-\textbf{v}_{12} = (r,1,a,1-a,v)$
which lies between $\textbf{0}$ and $\textbf{v}_8$
unless $r=a+1$ or $v=a$.
Let $\textbf{x}''' = \textbf{x}'' - \textbf{v}_8 = (r-a,-a-1,-1,0,v-a+1)$.
If $r=a+1$ and $v=a$ then
let $\textbf{x}''''=\textbf{x}'''-\textbf{v}_1 = (-a,-1,a-1,-a,1-a)$
which lies between $\textbf{0}$ and $-\textbf{v}_{15}$.
If $r=a+1$ and $v\leq a-1$ then
$\textbf{x}'''$ lies between $\textbf{0}$ and $\textbf{v}_2$.
If $r\leq a$ and $v=a$ then
$\textbf{x}'''$ lies between $\textbf{0}$ and $-\textbf{v}_5$.
unless $r=0$, in which case 
let $\textbf{x}^v=\textbf{x}'''+\textbf{v}_5 = (-1,0,a,1-a,-a)$
which lies between $\textbf{0}$ and $-\textbf{v}_1$.

Case 9: $\textbf{x}=(r,-s,-a-1,u,a+1)$ where $0\leq r\leq a+1$ and $0 \leq s,u \leq a$.
Let $\textbf{x}' = \textbf{x} - \textbf{v}_1 = (r-a-1,a-s,-1,u-a,1)$, which lies between $\textbf{0}$ and $\textbf{v}_6$ unless $r \leq 1$, in which case
let $\textbf{x}''=\textbf{x}'-\textbf{v}_6 = (r-2,-s-1,a-1,u+1,1-a)$
which lies between $\textbf{0}$ and $-\textbf{v}_{16}$
unless $u=a$  in which case 
let $\textbf{x}'''=\textbf{x}''+\textbf{v}_{16} = (r+a-1,a+1-s,0,1,0)$
which lies between $\textbf{0}$ and $\textbf{v}_{15}$.

Case 10: $\textbf{x}=(r,-s,-t,a+1,a+1)$ where $0\leq r\leq a+1$ and $0 \leq s,t \leq a$.
Let $\textbf{x}' = \textbf{x} - \textbf{v}_1 = (r-a-1,a-s,a-t,1,1)$, which lies between $\textbf{0}$ and $-\textbf{v}_4$ unless $s=0$, in which case
let $\textbf{x}''=\textbf{x}'+\textbf{v}_4 = (r,0,1-t,-a,1-a)$.
If $t=0$ then
$\textbf{x}''$ lies between $\textbf{0}$ and $-\textbf{v}_9$
unless $r=a+1$, in which case 
let $\textbf{x}'''=\textbf{x}''+\textbf{v}_9 = (1,a,-a,0,1)$
which lies between $\textbf{0}$ and $\textbf{v}_{15}$.
If $t\geq 1$ then
$\textbf{x}''$ lies between $\textbf{0}$ and $\textbf{v}_4$.

Case 11: $\textbf{x}=(r,-a-1,-a-1,a+1,v)$ where $0\leq r\leq a+1$ and $0 \leq v \leq a$.
Let $\textbf{x}' = \textbf{x} - \textbf{v}_1 = (r-a-1,-1,-1,1,v-a)$, which lies between $\textbf{0}$ and $-\textbf{v}_8$ unless $r=0$ or $v=0$.
Let $\textbf{x}''=\textbf{x}'+\textbf{v}_8 = (r-1,a+1,a,2-a,v-1)$.
If $r=0$ and $v=0$ then
let $\textbf{x}'''=\textbf{x}''+\textbf{v}_8 = (a,1,0,2,a-1)$
which lies between $\textbf{0}$ and $\textbf{v}_{15}$.
If $r=0$ and $v\geq 1$ then $\textbf{x}''$ lies between $\textbf{0}$ and $-\textbf{v}_2$.
If $r\geq 1$ and $v=0$ then $\textbf{x}''$ lies between $\textbf{0}$ and $\textbf{v}_5$
unless $r=a+1$, in which case 
let $\textbf{x}'''=\textbf{x}''-\textbf{v}_5 = (1,0,-1,1,a)$
which lies between $\textbf{0}$ and $\textbf{v}_1$.

Case 12: $\textbf{x}=(r,-a-1,-a-1,u,a+1)$ where $0\leq r\leq a+1$ and $0 \leq u \leq a$.
Let $\textbf{x}' = \textbf{x} - \textbf{v}_1 = (r-a-1,-1,-1,u-a,1)$, which lies between $\textbf{0}$ and $-\textbf{v}_{13}$ unless $r \leq 1$, in which case
let $\textbf{x}''=\textbf{x}'+\textbf{v}_{13} = (r-3,a-1,a-1,u+2,-a)$
which lies between $\textbf{0}$ and $-\textbf{v}_7$
unless $u=a$  in which case 
let $\textbf{x}'''=\textbf{x}''+\textbf{v}_7 = (r+a-1,0,0,1,0)$
which lies between $\textbf{0}$ and $-\textbf{v}_3$.

Case 13: $\textbf{x}=(r,-a-1,-t,a+1,a+1)$ where $0\leq r\leq a+1$ and $0 \leq t \leq a$.
Let $\textbf{x}' = \textbf{x} - \textbf{v}_1 = (r-a-1,-1,a-t,1,1)$, which lies between $\textbf{0}$ and $-\textbf{v}_{10}$ unless $r=0$ or $t=0$.
Let $\textbf{x}''=\textbf{x}'+\textbf{v}_{10} = (r-1,a,1-t,1-a,-a)$.
If $r=0$ and $t=0$ then $\textbf{x}''$ lies between $\textbf{0}$ and $-\textbf{v}_1$.
If $r=0$ and $t\geq 1$ then $\textbf{x}''$ lies between $\textbf{0}$ and $\textbf{v}_3$.
If $r\geq 1$ and $t=0$ then $\textbf{x}''$ lies between $\textbf{0}$ and $\textbf{v}_5$
unless $r=a+1$, in which case 
let $\textbf{x}'''=\textbf{x}''-\textbf{v}_5 = (1,-1,-a,0,1)$
which lies between $\textbf{0}$ and $\textbf{v}_1$.

Case 14: $\textbf{x}=(r,-s,-a-1,a+1,a+1)$ where $0\leq r\leq a+1$ and $0 \leq s \leq a$.
Let $\textbf{x}' = \textbf{x} - \textbf{v}_1 = (r-a-1,a-s,-1,1,1)$, which lies between $\textbf{0}$ and $\textbf{v}_9$ unless $r=0$, in which case
let $\textbf{x}''=\textbf{x}'-\textbf{v}_9 = (-1,-s,a,1-a,1-a)$
which lies between $\textbf{0}$ and $-\textbf{v}_{15}$.

Case 15: $\textbf{x}=(r,-a-1,-a-1,a+1,a+1)$ where $0\leq r\leq a+1$.
Let $\textbf{x}' = \textbf{x} - \textbf{v}_1 = (r-a-1,-1,-1,1,1)$, which lies between $\textbf{0}$ and $-\textbf{v}_5$ unless $r\leq 1$, in which case
let $\textbf{x}''=\textbf{x}'+\textbf{v}_5 = (r-2,a,a,2-a,-a)$
which lies between $\textbf{0}$ and $-\textbf{v}_1$.

This completes the cases for the orthant of $\textbf{v}_1$ for $k \equiv 0 \pmod 5$.  A full listing of the resolution of the exceptions for all 16 orthants is available from the author on request.
\end{proof}


\subsection {Existence proof for degree 10 circulant graphs of order $L_{CC}(10,k)$ for all diameters $k \equiv 1 \pmod 5$}

The existence of the degree 10 circulant graph of order $L_{CC}(10,k)$ for all diameters $k \equiv 1\pmod 5$, with generating set 1 of Table \ref {table:101A}, is proved following the same method as for Theorem \ref{theorem:100A}.

\begin{theorem}
For all $k\equiv 1 \pmod 5$, there is an undirected Cayley graph on five generators of a cyclic group which has diameter k and order $L_{CC}(10,k)$, where
\[ L_{CC}(10,k)=(512k^5+1280k^4+6560k^3+9520k^2+6100k+1028)/3125. \\
\]

Moreover, a generating set is
$\{1,
(128k^4+448k^3+1688k^2+3768k+843)/625,
(256k^4+416k^3+2816k^2+2346k+416)/625,
(256k^4+576k^3+3136k^2+3976k+1431)/625,
(384k^4+864k^3+4504k^2+5614k+1134)/625\}$.

\label{theorem:101A}
\end{theorem}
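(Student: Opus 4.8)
The plan is to follow the proof of Theorem~\ref{theorem:100A} almost verbatim, re-tuning only the lattice for the residue class $k\equiv 1\pmod 5$. Set $a=(2k-2)/5$ (an integer, since $k\equiv1\pmod5$), so that $5a=2k-2$ and every vector we use will have $\ell^1$-length $5a+3=2k+1$. I would exhibit five vectors $\textbf{v}_1,\dots,\textbf{v}_5\in\Z^5$ whose coordinates are affine in $a$ — the same $\{0,\pm a,\pm(a\pm1)\}$ pattern as in Theorem~\ref{theorem:100A}, but with the signs and unit offsets arranged so that each $\textbf{v}_i$ has $\ell^1$-length $2k+1$ — generating a lattice $L_k\subseteq\Z^5$. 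By the even-degree lattice criterion (Theorem~\ref{theoremEvenlattice}), the claim reduces to three verifications: (i) $\Z^5/L_k$ is cyclic; (ii) $|\det M|=L_{CC}(10,k)$ for $M=(\textbf{v}_1,\dots,\textbf{v}_5)^{\mathrm T}$; and (iii) $S_{5,k}+L_k=\Z^5$.

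Steps (i) and (ii) are a direct computation. I would display explicit integer combinations $\sum_i c_i(a)\textbf{v}_i$ equal to $(N,-1,0,0,0)$, $(\ast,0,-1,0,0)$, $(\ast,0,0,-1,0)$, $(\ast,0,0,0,-1)$; these show simultaneously that $\textbf{e}_1$ generates $\Z^5/L_k$ and — after row-reducing $M$ to a matrix with a single nonzero first column — that $\det M=16a^5+48a^4+84a^3+84a^2+42a+8$, which one rewrites in $k$ to match $(512k^5+1280k^4+6560k^3+9520k^2+6100k+1028)/3125$. The images of $\textbf{e}_2,\dots,\textbf{e}_5$ in $\Z^5/L_k\cong\Z_N$ come out as polynomials in $a$ (for instance $8a^4+24a^3+38a^2+36a+11=555$ at $a=2$), which re-expressed in $k$ give the four nontrivial generators in the statement. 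The smallest instance $k=6$ (order $2380$, generating set $\{1,555,860,951,970\}$) is checked by hand; thereafter one assumes $k\ge11$, i.e.\ $a\ge4$.

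For step (iii) I would reproduce the two-phase covering argument. Augment $\{\textbf{v}_1,\dots,\textbf{v}_5\}$ by eleven further lattice vectors $\textbf{v}_6,\dots,\textbf{v}_{16}$ — small $\pm1$ combinations of the first five — chosen so that the $32$ vectors $\pm\textbf{v}_i$ ($1\le i\le16$) place one lattice point strictly inside each orthant of $\Z^5$, with all coordinates of absolute value $\le a+2$ and almost all $\le a+1$. Given $\textbf{x}\in\Z^5$, repeatedly subtract a sign-matched $\textbf{v}_i$ until every coordinate has absolute value $\le a+1$ (each such step strictly decreases any too-large coordinate, so the process terminates). Then, because each $\textbf{v}_i$ has $\ell^1$-length $2k+1$, whenever the reduced vector $\textbf{x}'$ lies \emph{between} $\textbf{0}$ and some $\pm\textbf{v}_i$ we get $\delta(\textbf{0},\textbf{x}')+\delta(\textbf{x}',\pm\textbf{v}_i)=2k+1$, so one summand is $\le k$ and $\textbf{x}'\in S_{5,k}+L_k$; the same then holds for $\textbf{x}$. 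The remaining case is the \emph{boundary exceptions}: reduced vectors sitting in an orthant but not between $\textbf{0}$ and the associated $\pm\textbf{v}_i$, because one or more coordinates hit $\pm(a+1)$ on the wrong side.

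The main obstacle is exactly this last step. For each of the $16$ orthants one must enumerate the boundary-exception configurations and, for each, give an explicit short chain of lattice translations ending in a vector that provably lies between $\textbf{0}$ and some $\pm\textbf{v}_j$ — precisely the multi-case bookkeeping carried out for the orthant of $\textbf{v}_1$ in Theorem~\ref{theorem:100A}. Since the leading and second coefficients of $L_{CC}(10,k)$ agree across all five classes, the skeleton of auxiliary vectors and the overall case split transfer, but the constant and linear adjustments in the coordinates differ, so every exception chain must be re-derived and re-checked; as noted in Section~\ref{section:proof10}, the full resolution over all orthants is lengthy and was confirmed by a dedicated computer program. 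In the write-up I would present one orthant in full as an illustration and defer the remainder to that verification.
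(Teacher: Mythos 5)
Your proposal follows the paper's proof of this theorem essentially verbatim: the same lattice construction via Theorem \ref{theoremEvenlattice}, the same three verifications (cyclicity via explicit combinations giving $(\ast,0,\dots,-1,\dots,0)$, the determinant computation, and the covering $S_{5,k}+L_k=\Z^5$ via 16 orthant vectors, sign-matched reduction, and deferred boundary exceptions), with the base case $k=6$ checked directly and $k\ge 11$ assumed thereafter; your determinant $16a^5+48a^4+84a^3+84a^2+42a+8$ and sample generator $8a^4+24a^3+38a^2+36a+11$ are exactly the paper's polynomials under your shifted parameter $a=(2k-2)/5$ in place of the paper's $(2k+3)/5$. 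This is the same argument up to that trivial reparametrization, and like the paper you leave the orthant-by-orthant exception chains to separate verification.
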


\begin{proof}
We will show the existence of five-dimensional lattices $L_k \subseteq \Z^5$ such that $\Z^5/L_k$ is cyclic, $S_{5,k}+L_k=\Z^5$, where $S_{5,k}$ is the set of points in $\Z^5$ at a distance of at most $k$ from the origin under the $\ell^1$ (Manhattan) metric, and $\vert \Z^5 : L_k\vert = L_{CC}(10,k) $ as specified in the theorem. Then, by Theorem \ref {theoremD}, the resultant Cayley graph has diameter at most $k$. 

Let $a= (2k+3)/5$, and let $L_k$ be defined by five generating vectors as follows:
\[
\begin{array}{rcl}
\textbf{v}_1&=&(a-1,a,-a,a,a-1)\\
\textbf{v}_2&=&(a-2,-a-1,-a-1,-a+2,a)\\
\textbf{v}_3&=&(a-1,-a,-a+1,a,-a)\\
\textbf{v}_4&=&(a-1,a-1,-a-1,-a+1,a)\\
\textbf{v}_5&=&(a,-a-1,a-1,-a,-a+2)\\
\end{array}
\] 
Then the following vectors are in $L_k$:
\newline
$(4a^3-2a^2+4a+2)\textbf{v}_1 - 4a^2\textbf{v}_2 + (2a+1)\textbf{v}_3 + (2a^2-2a)\textbf{v}_4 + (4a^3+2a+1)\textbf{v}_5 = (8a^4-8a^3+14a^2-3, -1, 0, 0, 0),$
\newline
$(8a^3-10a^2+13a-4)\textbf{v}_1 - (8a^2-6a+2)\textbf{v}_2 + (4a-1)\textbf{v}_3 + (4a^2-7a+4)\textbf{v}_4 + (8a^3-6a^2+6a-2)\textbf{v}_5 = (16a^4-28a^3+44a^2-27a+5, 0, -1, 0, 0),$
\newline
$(8a^3-8a^2+12a-2)\textbf{v}_1 - (8a^2-4a+2)\textbf{v}_2 + 4a\textbf{v}_3 + (4a^2-6a+3)\textbf{v}_4 + (8a^3-4a^2+6a-1)\textbf{v}_5 = (16a^4-24a^3+40a^2-20a+3, 0, 0, -1, 0),$
\newline
$(12a^3-12a^2+17a-3)\textbf{v}_1 - (12a^2-6a+2)\textbf{v}_2 + 6a\textbf{v}_3 + (6a^2-9a+4)\textbf{v}_4 + (12a^3-6a^2+8a-2)\textbf{v}_5 = (24a^4-36a^3+58a^2-29a+3, 0, 0, 0, -1)$
\newline

Hence we have $\textbf{e}_2 =  (8a^4-8a^3+14a^2-3)\textbf{e}_1,  \textbf{e}_3 =  (16a^4-28a^3+44a^2-27a+5)\textbf{e}_1, \textbf{e}_4 =  (16a^4-24a^3+40a^2-20a+3)\textbf{e}_1$,     and $\textbf{e}_5 =  (24a^4-36a^3+58a^2-29a+3)\textbf{e}_1$ in $\Z^5/L_k$, and so $\textbf{e}_1$ generates $\Z^5/L_k$.

Also det $\left ( \begin{array} {c} \textbf{v}_1 \\ \textbf{v}_2 \\ \textbf{v}_3 \\ \textbf{v}_4 \\ \textbf{v}_5 \end {array} \right ) = $ det
$\left (
\begin{array} {l r r r r}
16a^5-32a^4+52a^3-40a^2+14a-2 & 0 & 0 & 0 & 0 \\
8a^4-8a^3+14a^2-3 & -1 & 0 & 0 & 0 \\
16a^4-28a^3+44a^2-27a+5 & 0 & -1 & 0 & 0 \\
16a^4-24a^3+40a^2-20a+3 & 0 & 0 & -1 & 0 \\
24a^4-36a^3+58a^2-29a+3 & 0 & 0 & 0 & -1  
\end {array} \right )$ 
\newline
\newline
$= 16a^5-32a^4+52a^3-40a^2+14a-2 = (512k^5+1280k^4+6560k^3+9520k^2+6100k+1028)/3125= L_{CC}(10,k)$, as in the statement of the theorem. 

Thus $\Z^5/L_k$ is isomorphic to $\Z_{L_{CC}(10,k)}$ via an isomorphism taking $\textbf{e}_1, \textbf{e}_2, \textbf{e}_3, \textbf{e}_4, \textbf{e}_5$ respectively to $1, 8a^4-8a^3+14a^2-3, 16a^4-28a^3+44a^2-27a+5, 16a^4-24a^3+40a^2-20a+3, 24a^4-36a^3+58a^2-29a+3 $. As $a=(2k+3)/5$ this gives the generating set specified in the theorem: $\{1,
(128k^4+448k^3+1688k^2+3768k+843)/625,
(256k^4+416k^3+2816k^2+2346k+416)/625,
(256k^4+576k^3+3136k^2+3976k+1431)/625,
(384k^4+864k^3+4504k^2+5614k+1134)/625\}$.

It remains to show that $S_{5,k}+L_k=\Z^5$. For $k=6$, it is straightforward to show directly that $\Z_{2380}$ with generators $1, 555, 860, 951, 970$ has diameter 6. So we assume $k\geq11$, so that $a \geq 5$. Now let
\[
\begin{array}{rll}
\textbf{v}_6&=\textbf{v}_1+\textbf{v}_2-\textbf{v}_4&=(a-2,-a,-a,a+1,a-1)\\
\textbf{v}_7&=\textbf{v}_1-\textbf{v}_2+\textbf{v}_5&=(a+1,a,a,a-2,-a+1)\\
\textbf{v}_8&=-\textbf{v}_1+\textbf{v}_3+\textbf{v}_4&=(a-1,-a-1,-a,-a+1,-a+1)\\
\textbf{v}_9&=\textbf{v}_1-\textbf{v}_3+\textbf{v}_5&=(a,a-1,a-2,-a,a+1)\\
\textbf{v}_{10}&=\textbf{v}_1-\textbf{v}_4+\textbf{v}_5&=(a,-a,a,a-1,-a+1)\\
\textbf{v}_{11}&=-\textbf{v}_2+\textbf{v}_3+\textbf{v}_4&=(a,a,-a+1,a-1,-a)\\
\textbf{v}_{12}&=-\textbf{v}_2+\textbf{v}_4+\textbf{v}_5&=(a+1,a-1,a-1,-a-1,-a+2)\\
\textbf{v}_{13}&=-\textbf{v}_3+\textbf{v}_5+\textbf{v}_6&=(a-1,-a-1,a-2,-a+1,a+1)\\
\textbf{v}_{14}&=\textbf{v}_1+\textbf{v}_5-\textbf{v}_8&=(a,a,a-1,a-1,a)\\
\textbf{v}_{15}&=-\textbf{v}_2+\textbf{v}_4+\textbf{v}_8&=(a,a-1,-a,-a,-a+1)\\
\textbf{v}_{16}&=\textbf{v}_1-\textbf{v}_4+\textbf{v}_{13}&=(a-1,-a,a-1,a,a).
\end{array}
\]
Then the 32 vectors $\pm\textbf{v}_i$ for $i=1,...,16$ provide one element of $L_k$ lying strictly within each of the 32 orthants of $\Z^5$. All of the coordinates of these vectors have absolute value at most $a+1$. As in the proof of Theorem \ref {theorem:100A} any vector $\textbf{x}$ is reduced to a vector $\textbf{x}'$ with coordinates with absolute value at most $a+1$ by successively subtracting appropriate vectors $\textbf{v}=\pm \textbf{v}_i$. If a coordinate of $\textbf{x}$ is 0 then either sign is allowed for $\textbf{v}$ as the corresponding coordinate of $\textbf{v}$ has absolute value $\leq a+1$.

If the resulting vector $\textbf{x}'$ lies between $\textbf{0}$ and $\textbf{v}$, where $\textbf{v}=\pm\textbf{v}_i$ for some $i$, then we have $\delta(\textbf{0},\textbf{x}')+\delta(\textbf{x}',\textbf{v})=\delta(\textbf{0},\textbf{v})$. All of the vectors $\textbf{v}$ satisfy $\delta(\textbf{0},\textbf{v})=5a-2$, so we have $\delta(\textbf{0},\textbf{v})=2k+1$. Hence $\textbf{x}' \in S_{5,k}+L_k$, so that $\textbf{x} \in S_{5,k}+L_k$ as required. Now we are left with the case where the absolute value of each coordinate of the reduced $\textbf{x}$ is at most $a+1$, and $\textbf{x}$ is in the orthant of $\textbf{v}$, where $\textbf{v} = \pm \textbf{v}_i$ for some $i \leq 16$ but does not lie between $\textbf{0}$ and $\textbf{v}$.
\end{proof}


\subsection {Existence proof for degree 10 circulant graphs of order $L_{CC}(10,k)$ for all diameters $k \equiv 2 \pmod {15}$}

For graphs of diameter $k \equiv 2\pmod {5}$ there is no single generating set valid for all diameters. Therefore we must consider in turn the three subcases $k \equiv 2, 7$ and $ 12 \pmod {15}$. First the existence of the degree 10 circulant graph of order $L_{CC}(10,k)$ for all diameters $k \equiv 2\pmod {15}$, with generating set 1 of Table \ref {table:102A}, is proved.

\begin{theorem}
For all $k\equiv 2 \pmod {15}$, there is an undirected Cayley graph on five generators of a cyclic group which has diameter k and order $L_{CC}(10,k)$, where
\[ L_{CC}(10,k)=(512k^5+1280k^4+6080k^3+7840k^2+10010k+3741)/3125. \\
\]

Moreover, a generating set is
$\{1,
(128k^4+256k^3+1192k^2+1064k+883)/625,
(512k^5+640k^4+4000k^3-1320k^2-6660k-11899)/9375,
(512k^5+640k^4+4000k^3-1320k^2+840k-8149)/9375,
(512k^5+640k^4+6400k^3+2280k^2+13890k-2224)/9375\}$.

\label{theorem:102A}
\end{theorem}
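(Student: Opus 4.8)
The plan is to reproduce, for this residue class, the lattice argument used for Theorems \ref{theorem:100A} and \ref{theorem:101A}, which itself follows Dougherty and Faber. By the even-degree criterion (Theorem \ref{theoremEvenlattice}) it suffices to construct a sublattice $L_k\subseteq\Z^5$ with $\Z^5/L_k$ cyclic of order $L_{CC}(10,k)=(512k^5+1280k^4+6080k^3+7840k^2+10010k+3741)/3125$ and with $S_{5,k}+L_k=\Z^5$; the generating set in the statement is then the tuple of images of the standard basis of $\Z^5$ under an isomorphism $\Z^5/L_k\xrightarrow{\sim}\Z_{L_{CC}(10,k)}$. I would introduce the parameter $a=(2k+1)/5$ (the natural analogue of $a=2k/5$ and $a=(2k+3)/5$ in the two preceding proofs, and half the parameter of the $a$-format in Table \ref{table:102A}), which is an integer exactly when $k\equiv 2\pmod 5$, and write down five vectors $\textbf{v}_1,\dots,\textbf{v}_5$ with entries affine in $a$, reverse-engineered from generating set 1 of Table \ref{table:102A}, each satisfying $\delta(\textbf{0},\textbf{v}_i)=2k+1$. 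The finer hypothesis $k\equiv 2\pmod{15}$, equivalently $a\equiv 1\pmod 6$, is precisely what lets a single construction serve the whole class and also what keeps the Table \ref{table:102A} formulas integral; the residues $k\equiv 7$ and $k\equiv 12\pmod{15}$ are treated separately with the companion generating sets, as the surrounding text indicates.

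The algebraic part is then bookkeeping. For each $j\in\{2,3,4,5\}$ I would exhibit an explicit integer combination $\sum_{i=1}^{5} c_{ji}\textbf{v}_i=(N_j,0,\dots,-1,\dots,0)$ with the $-1$ in coordinate $j$; this gives $\textbf{e}_j\equiv N_j\textbf{e}_1$ in $\Z^5/L_k$, so $\textbf{e}_1$ generates and the quotient is cyclic. Using the same combinations to clear off-diagonal entries turns $M=(\textbf{v}_1,\dots,\textbf{v}_5)^{\textnormal{T}}$ into a triangular matrix whose determinant collapses to a single entry, which I would verify equals the stated quintic after substituting $a=(2k+1)/5$. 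Re-expressing $1,N_2,N_3,N_4,N_5$ as polynomials in $k$ recovers the generating set of the theorem, and confirming that these are genuine integers is exactly where $a\equiv 1\pmod 6$ is used.

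The substantive step is $S_{5,k}+L_k=\Z^5$. I would settle the base case $k=2$ (order $45$) by a direct check and then assume $k\ge 17$, so that $a$ is large. Next I would define auxiliary vectors $\textbf{v}_6,\dots,\textbf{v}_{16}$ as short integer combinations of $\textbf{v}_1,\dots,\textbf{v}_5$ so that $\{\pm\textbf{v}_i:1\le i\le 16\}$ contains one lattice point strictly inside each of the $32$ orthants of $\Z^5$, all of whose coordinates have absolute value at most $a+1$ (with only a few entries possibly equal to $a+2$) and each with $\delta(\textbf{0},\textbf{v}_i)=2k+1$. For arbitrary $\textbf{x}\in\Z^5$, repeatedly subtracting a sign-matched $\pm\textbf{v}_i$ reduces $\textbf{x}$ to a vector with all coordinates in $[-(a+1),a+1]$; if that vector lies between $\textbf{0}$ and the relevant $\pm\textbf{v}_i$, then, since $\delta(\textbf{0},\textbf{v}_i)=2k+1$ is a sum of two nonnegative integers, one of the two pieces is at most $k$ and we are done. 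What remains is the finite orthant-by-orthant analysis of the boundary exceptions — reduced vectors lying in the orthant of some $\textbf{v}_i$ but not between $\textbf{0}$ and $\textbf{v}_i$ — disposed of by explicit sequences of lattice moves exactly like those worked out for the orthant of $\textbf{v}_1$ in Theorem \ref{theorem:100A}.

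I expect the difficulty to be computational rather than conceptual, in two places. First, actually \emph{finding} the five generating vectors (and the auxiliaries $\textbf{v}_6,\dots,\textbf{v}_{16}$) so that the coordinate bound $a+1$, the length condition $\delta(\textbf{0},\textbf{v}_i)=2k+1$, cyclicity of $\Z^5/L_k$, and the prescribed determinant hold simultaneously: this is the search-and-pattern-recognition step, with no closed-form shortcut. Second, the boundary-exception case analysis, which, though elementary, is long over all $16$ essentially distinct orthants and — as the author states for the other diameter classes — is confirmed with a tailored program rather than printed in full. Once the vectors are fixed, the determinant identity and the integrality of the generators come down to polynomial identities in $a$ together with congruences modulo $6$, all mechanical to check.
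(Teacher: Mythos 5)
Your plan coincides with the paper's proof of this theorem: the paper parametrises by $a=(2k-4)/15$ (your $(2k+1)/5$ equals $3a+1$, so the coordinate bounds you predict, $a'+1=3a+2$ with a few entries at $a'+2=3a+3$, are exactly what occur --- only $\pm\mathbf{v}_3$ reaches $3a+3$), exhibits five lattice vectors of $\ell^1$-norm $15a+5=2k+1$, triangularises the lattice matrix to read off the determinant and the generating set, and finishes with sixteen auxiliary orthant vectors and the boundary-exception analysis, all exactly as you describe. The one substantive slip is your base-case threshold: the admissible diameters in this class are $k=2,17,32,\dots$ (paper's $a=0,2,4,\dots$) and the general reduction needs $a\ge 4$, so the paper checks both $k=2$ and $k=17$ directly and only argues uniformly for $k\ge 32$, whereas you would start the uniform argument at $k=17$; everything else you defer as bookkeeping (the explicit $\mathbf{v}_1,\dots,\mathbf{v}_5$, the four unimodular combinations giving cyclicity, and $\mathbf{v}_6,\dots,\mathbf{v}_{16}$) is precisely the computational content the paper supplies.
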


\begin{proof}
We will show the existence of five-dimensional lattices $L_k \subseteq \Z^5$ such that $\Z^5/L_k$ is cyclic, $S_{5,k}+L_k=\Z^5$, where $S_{5,k}$ is the set of points in $\Z^5$ at a distance of at most $k$ from the origin under the $\ell^1$ (Manhattan) metric, and $\vert \Z^5 : L_k\vert = L_{CC}(10,k) $ as specified in the theorem. Then, by Theorem \ref {theoremD}, the resultant Cayley graph has diameter at most $k$. 

Let $a= (2k-4)/15$, and let $L_k$ be defined by five generating vectors as follows:
\[
\begin{array}{rcl}
\textbf{v}_1&=&(3a+1,-3a-1,-3a-1,-3a,-3a-2)\\
\textbf{v}_2&=&(3a+1,3a+1,-3a-1,-3a-1,-3a-1)\\
\textbf{v}_3&=&(3a,3a,3a+1,-3a-3,3a+1)\\
\textbf{v}_4&=&(3a+2,-3a,-3a,3a+2,-3a-1)\\
\textbf{v}_5&=&(3a+1,3a,-3a-1,-3a-2,3a+1)\\
\end{array}
\] 
Then the following vectors are in $L_k$:
\newline
$-(18a^2+6a)\textbf{v}_1 - \textbf{v}_2 + (108a^3+90a^2+30a+3)\textbf{v}_3 + (108a^3+126a^2+54a+8)\textbf{v}_4 + (18a^2+12a+4)\textbf{v}_5 = (648a^4+864a^3+522a^2+156a+19, -1, 0, 0, 0),$
\newline
$-(36a^3+18a^2+2a)\textbf{v}_1 - 2a\textbf{v}_2 + (216a^4+216a^3+90a^2+14a)\textbf{v}_3 + (216a^4+288a^3+150a^2+32a+1)\textbf{v}_4 + (36a^3+30a^2+12a+1)\textbf{v}_5 = (1296a^5+1944a^4+1332a^3+474a^2+78a+3, 0, -1, 0, 0),$
\newline
$-(36a^3+18a^2+2a)\textbf{v}_1 - 2a\textbf{v}_2 + (216a^4+216a^3+90a^2+14a+1)\textbf{v}_3 + (216a^4+288a^3+150a^2+32a+2)\textbf{v}_4 + (36a^3+30a^2+12a+1)\textbf{v}_5 = (1296a^5+1944a^4+1332a^3+474a^2+84a+5, 0, 0, -1, 0),$
\newline
$-(36a^3+18a^2+5a)\textbf{v}_1 - 2a\textbf{v}_2 + (216a^4+216a^3+108a^2+23a+2)\textbf{v}_3 + (216a^4+288a^3+168a^2+47a+5)\textbf{v}_4 + (36a^3+30a^2+15a+2)\textbf{v}_5 = (1296a^5+1944a^4+1440a^3+582a^2+129a+12, 0, 0, 0, -1).$
\newline

Hence we have $\textbf{e}_2 =  (648a^4+864a^3+522a^2+156a+19)\textbf{e}_1,  \textbf{e}_3 =  (1296a^5+1944a^4+1332a^3+474a^2+78a+3)\textbf{e}_1, \textbf{e}_4 =  (1296a^5+1944a^4+1332a^3+474a^2+84a+5)\textbf{e}_1$,  and $\textbf{e}_5 =  (1296a^5+1944a^4+1440a^3+582a^2+129a+12)\textbf{e}_1$ in $\Z^5/L_k$, and so $\textbf{e}_1$ generates $\Z^5/L_k$.

Also det $\left ( \begin{array} {c} \textbf{v}_1 \\ \textbf{v}_2 \\ \textbf{v}_3 \\ \textbf{v}_4 \\ \textbf{v}_5 \end {array} \right )$

$ = $ det
$\left (
\begin{array} {l r r r r}
3888a^5+6480a^4+4968a^3+2088a^2+471a+45 & 0 & 0 & 0 & 0 \\
648a^4+864a^3+522a^2+156a+19 & -1 & 0 & 0 & 0 \\
1296a^5+1944a^4+1332a^3+474a^2+78a+3 & 0 & -1 & 0 & 0 \\
1296a^5+1944a^4+1332a^3+474a^2+84a+5 & 0 & 0 & -1 & 0 \\
1296a^5+1944a^4+1440a^3+582a^2+129a+12 & 0 & 0 & 0 & -1  
\end {array} \right )$ 
\newline
\newline
$= 3888a^5+6480a^4+4968a^3+2088a^2+471a+45 = (512k^5+1280k^4+6080k^3+7840k^2+10010k+3741)/3125= L_{CC}(10,k)$, as in the statement of the theorem. 

Thus $\Z^5/L_k$ is isomorphic to $\Z_{L_{CC}(10,k)}$ via an isomorphism taking $\textbf{e}_1, \textbf{e}_2, \textbf{e}_3, \textbf{e}_4, \textbf{e}_5$ respectively to $1, 648a^4+864a^3+522a^2+156a+19, 1296a^5+1944a^4+1332a^3+474a^2+78a+3, 1296a^5+1944a^4+1332a^3+474a^2+84a+5, 1296a^5+1944a^4+1440a^3+582a^2+129a+12 $. As $a=(2k-4)/15$ this gives the generating set specified in the theorem: $\{1,
(128k^4+256k^3+1192k^2+1064k+883)/625,
(512k^5+640k^4+4000k^3-1320k^2-6660k-11899)/9375,
(512k^5+640k^4+4000k^3-1320k^2+840k-8149)/9375,
(512k^5+640k^4+6400k^3+2280k^2+13890k-2224)/9375\}$.

It remains to show that $S_{5,k}+L_k=\Z^5$. For $k=2$ it is straightforward to show directly that $\Z_{45}$ with generators $1, 3, 5, 12, 19$ has diameter 2, and also that $\Z_{277179}$ with generators $1, 19699, 85287, 85301, 86694$ has diameter 17. So we assume $k\geq32$, so that $a \geq 4$. Now let
\[
\begin{array}{rll}
\textbf{v}_6&=       \textbf{v}_1  -\textbf{v}_2   +\textbf{v}_3&=     (3a,-3a-2,3a+1,-3a-2,3a)\\
\textbf{v}_7&=      -\textbf{v}_1  +\textbf{v}_2   +\textbf{v}_4&=    (3a+2,3a+2,-3a,3a+1,-3a)\\
\textbf{v}_8&=       \textbf{v}_1  -\textbf{v}_2  +\textbf{v}_5&=     (3a+1,-3a-2,-3a-1,-3a-1,3a)\\
\textbf{v}_9&=      -\textbf{v}_2  +\textbf{v}_3    +\textbf{v}_4&=   (3a+1,-3a-1,3a+2,3a,3a+1)\\
\textbf{v}_{10}&=  -\textbf{v}_2  +\textbf{v}_4   +\textbf{v}_5&=   (3a+2,-3a-1,-3a,3a+1,3a+1)\\
\textbf{v}_{11}&=  \textbf{v}_3  +\textbf{v}_4  -\textbf{v}_5&=     (3a+1,-3a,3a+2,3a+1,-3a-1)\\
\textbf{v}_{12}&=  \textbf{v}_2  -\textbf{v}_5  +\textbf{v}_6&=    (3a,-3a-1,3a+1,-3a-1,-3a-2)\\
\textbf{v}_{13}&= -\textbf{v}_2  +\textbf{v}_3  +\textbf{v}_7&=    (3a+1,3a+1,3a+2,3a-1,3a+2)\\
\textbf{v}_{14}&=  -\textbf{v}_2  +\textbf{v}_5  +\textbf{v}_7&=     (3a+2,3a+1,-3a,3a,3a+2)\\
\textbf{v}_{15}&=  \textbf{v}_3  -\textbf{v}_5  +\textbf{v}_7&=      (3a+1,3a+2,3a+2,3a,-3a)\\
\textbf{v}_{16}&=  \textbf{v}_1  +\textbf{v}_3   -\textbf{v}_8&=    (3a,3a+1,3a+1,-3a-2,-3a-1).
\end{array}
\]
Then the 32 vectors $\pm\textbf{v}_i$ for $i=1,...,16$ provide one element of $L_k$ lying strictly within each of the 32 orthants of $\Z^5$. Most of the coordinates of these vectors have absolute value at most $3a+2$. Only $\pm \textbf{v}_3$ have one coordinate with absolute value equal to $3a+3$.
As in the proof of Theorem \ref {theorem:100A} any vector $\textbf{x}$ is reduced to a vector $\textbf{x}'$ with coordinates with absolute value at most $3a+2$ by successively subtracting appropriate vectors $\textbf{v}=\pm \textbf{v}_i$. If a coordinate of $\textbf{x}$ is 0 then either sign is allowed for $\textbf{v}$ as the corresponding coordinate of $\textbf{v}$ has absolute value $\leq 3a+2$.
So if the $\textbf{e}_4$ coordinate of $\textbf{x}$ is 0 then we avoid $\textbf{v}_3$ and take $\textbf{v}_{13}$ instead.

If the resulting vector $\textbf{x}'$ lies between $\textbf{0}$ and $\textbf{v}$, where $\textbf{v}=\pm\textbf{v}_i$ for some $i$, then we have $\delta(\textbf{0},\textbf{x}')+\delta(\textbf{x}',\textbf{v})=\delta(\textbf{0},\textbf{v})$. All of the vectors $\textbf{v}$ satisfy $\delta(\textbf{0},\textbf{v})=15a+5$, so we have $\delta(\textbf{0},\textbf{v})=2k+1$. Hence $\textbf{x}' \in S_{5,k}+L_k$, so that $\textbf{x} \in S_{5,k}+L_k$ as required. Now we are left with the case where the absolute value of each coordinate of the reduced $\textbf{x}$ is at most $3a+2$, and $\textbf{x}$ is in the orthant of $\textbf{v}$, where $\textbf{v} = \pm \textbf{v}_i$ for some $i \leq 16$ but does not lie between $\textbf{0}$ and $\textbf{v}$.
\end{proof}


\subsection {Existence proof for degree 10 circulant graphs of order $L_{CC}(10,k)$ for all diameters $k \equiv 7 \pmod {15}$}

For graphs of diameter $k \equiv 2\pmod {5}$ we now consider the second subcase where $k \equiv 7 \pmod {15}$. The existence of the degree 10 circulant graph of order $L_{CC}(10,k)$ for all diameters $k \equiv 7\pmod {15}$, with generating set 1 of Table \ref {table:102A}, is proved in the following theorem.

\begin{theorem}
For all $k\equiv 7 \pmod {15}$, there is an undirected Cayley graph on five generators of a cyclic group which has diameter k and order $L_{CC}(10,k)$, where
\[ L_{CC}(10,k)=(512k^5+1280k^4+6080k^3+7840k^2+10010k+3741)/3125. \\
\]

Moreover, a generating set is
$\{1,
(128k^4+256k^3+1192k^2+1064k+883)/625,
(512k^5+640k^4+3200k^3-2520k^2-8510k-12624)/9375,
(512k^5+640k^4+5600k^3+1080k^2+4540k-6699)/9375,
(512k^5+640k^4+5600k^3+1080k^2+12040k-2949)/9375\}$.

\label{theorem:102B}
\end{theorem}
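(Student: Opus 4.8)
The plan is to follow verbatim the template established in the proofs of Theorems~\ref{theorem:100A} and~\ref{theorem:102A}, since the statement here differs only in the residue class $k\equiv 7\pmod{15}$ (as opposed to $k\equiv 2\pmod{15}$) while the order polynomial $L_{CC}(10,k)$ is identical. First I would set $a=(2k-7)/15$, so that $a$ is a positive integer precisely when $k\equiv 7\pmod{15}$, and exhibit five explicit lattice generating vectors $\textbf{v}_1,\dots,\textbf{v}_5\in\Z^5$ whose entries are linear in $a$ with coefficients very close to those appearing in Theorem~\ref{theorem:102A} (each coordinate of the form $\pm 3a + c$ for small integer $c$). The choice of these vectors is exactly the ``inspiration and pattern recognition'' part: they must be rigged so that the sublattice they generate has the right index and a cyclic quotient. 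I would then display the four explicit integer combinations $\sum_i p_i(a)\textbf{v}_i = (q_j(a), 0,\dots,-1,\dots,0)$ expressing $\textbf{e}_2,\textbf{e}_3,\textbf{e}_4,\textbf{e}_5$ as integer multiples of $\textbf{e}_1$ modulo $L_k$, which simultaneously proves that $\Z^5/L_k$ is cyclic and, after row-reducing the matrix $M=(\textbf{v}_1,\dots,\textbf{v}_5)^{\mathrm T}$ to triangular form, that $|\det M| = (512k^5+1280k^4+6080k^3+7840k^2+10010k+3741)/3125 = L_{CC}(10,k)$. Substituting $a=(2k-7)/15$ back into the multipliers $q_j(a)$ yields the stated generating set; this is a routine, if tedious, polynomial substitution.

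The substantive step is then to verify $S_{5,k}+L_k=\Z^5$, which by Theorem~\ref{theoremEvenlattice} is equivalent to the resulting Cayley graph having diameter at most $k$ (and the index computation forces diameter exactly $k$). For this I would first dispose of the small cases ($k=7$, and possibly one more like $k=22$) by a direct finite check that the explicit circulant graph has the claimed diameter, and then assume $a$ is large enough, say $a\ge 4$. Next I would introduce sixteen auxiliary vectors $\textbf{v}_6,\dots,\textbf{v}_{16}$, each a short integer combination of the $\textbf{v}_i$, so that the $32$ vectors $\pm\textbf{v}_i$ ($1\le i\le 16$) put one lattice point strictly inside each of the $32$ orthants of $\Z^5$, with all (or nearly all) coordinates of absolute value $\le 3a+2$ and common $\ell^1$-norm $\delta(\textbf{0},\textbf{v})=15a+c$; one checks this equals $2k+1$ for the relevant $c$. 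The standard reduction argument then applies verbatim: any $\textbf{x}\in\Z^5$ can be reduced by repeatedly subtracting the orthant-matching $\textbf{v}$ until every coordinate has absolute value $\le 3a+2$, and if the reduced vector lies between $\textbf{0}$ and some $\pm\textbf{v}_i$ then, since $\delta(\textbf{0},\textbf{x}')+\delta(\textbf{x}',\textbf{v})=2k+1$, one of the two summands is $\le k$, placing $\textbf{x}'$ (hence $\textbf{x}$) in $S_{5,k}+L_k$.

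The genuine work, and the main obstacle, is the residual boundary analysis: the vectors $\textbf{x}$ with all coordinates of absolute value $\le 3a+2$ that lie in some orthant but \emph{not} between $\textbf{0}$ and the corresponding $\textbf{v}_i$. As in Theorem~\ref{theorem:100A}, each of the $16$ orthants splits into a dozen or so cases (indexed by which coordinates sit at the extreme values $\pm(3a+1)$ or $\pm(3a+2)$), and in each case one must produce a short chain $\textbf{x}\to\textbf{x}'\to\textbf{x}''\to\cdots$ of subtractions of lattice vectors ending in a vector that provably lies between $\textbf{0}$ and some $\pm\textbf{v}_j$; occasional sub-subcases arise when an intermediate vector again hits a boundary (e.g.\ a coordinate equal to $0$ or to $\pm(3a+2)$), requiring a further corrective step with yet another $\textbf{v}_j$. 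Following the convention of the paper, I would present the full case analysis for just one orthant (that of $\textbf{v}_1$) as a representative example and state that the remaining $15$ orthants, together with the small-$k$ verifications, were handled by the author's tailored computer program and are available on request. Care is needed only in two places: choosing the $\textbf{v}_6,\dots,\textbf{v}_{16}$ so that the ``take $\textbf{v}_j$ instead'' substitutions are always available when a coordinate vanishes (this is why, e.g., $\textbf{v}_{13}$ is introduced with small $\textbf{e}_4$-coordinate in the $k\equiv2$ case), and checking that no case chain exceeds the length that the between-ness bookkeeping can absorb.
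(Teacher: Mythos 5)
Your plan follows the paper's proof essentially step for step: the paper likewise defines $L_k$ by five vectors with entries of the form $\pm 3a+c$ for small $c$, exhibits four integer combinations expressing $\textbf{e}_2,\dots,\textbf{e}_5$ as multiples of $\textbf{e}_1$ modulo $L_k$, evaluates the determinant ($3888a^5+648a^3+15a$) to get the index and the generating set, checks $k=7$ and $k=22$ directly before assuming $a\ge 5$, introduces $\textbf{v}_6,\dots,\textbf{v}_{16}$ so that $\pm\textbf{v}_1,\dots,\pm\textbf{v}_{16}$ hit all $32$ orthants (all coordinates of absolute value at most $3a+1$ except one coordinate of $\pm\textbf{v}_4$ equal to $3a+2$, with common norm $\delta(\textbf{0},\textbf{v})=15a=2k+1$), and runs the same reduction-plus-betweenness argument, deferring the residual boundary exceptions exactly as you propose.

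One concrete slip: for $k\equiv 7\pmod{15}$ the quantity $(2k-7)/15$ is \emph{not} an integer (at $k=7$ it equals $7/15$); integrality of $(2k+c)/15$ requires $c\equiv -2k\equiv 1\pmod{15}$, and the paper accordingly takes $a=(2k+1)/15$, which also makes $15a=2k+1$ exactly the $\ell^1$-norm needed for the betweenness step. With your choice both the integrality and the norm identity $\delta(\textbf{0},\textbf{v})=2k+1$ fail, so the construction as literally stated cannot be carried out, although the fix is a one-character change. Beyond that, the proposal is a faithful template but never exhibits the five lattice vectors, the four combinations, or the sixteen orthant representatives; since the entire non-routine content of the theorem lies in producing these data (and verifying the index, cyclicity, and orthant coverage from them), the plan as written describes the shape of the proof rather than supplying one.
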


\begin{proof}
We will show the existence of five-dimensional lattices $L_k \subseteq \Z^5$ such that $\Z^5/L_k$ is cyclic, $S_{5,k}+L_k=\Z^5$, where $S_{5,k}$ is the set of points in $\Z^5$ at a distance of at most $k$ from the origin under the $\ell^1$ (Manhattan) metric, and $\vert \Z^5 : L_k\vert = L_{CC}(10,k) $ as specified in the theorem. Then, by Theorem \ref {theoremD}, the resultant Cayley graph has diameter at most $k$. 

Let $a= (2k+1)/15$, and let $L_k$ be defined by five generating vectors as follows:
\[
\begin{array}{rcl}
\textbf{v}_1&=&(3a+1,3a+1,3a,-3a+2,3a)\\
\textbf{v}_2&=&(3a,3a-1,-3a-1,3a+1,3a-1)\\
\textbf{v}_3&=&(3a,-3a,-3a+1,-3a-1,-3a)\\
\textbf{v}_4&=&(3a,3a,3a-1,3a+2,3a-1)\\
\textbf{v}_5&=&(3a-1,3a,3a-1,3a+1,-3a-1)\\
\end{array}
\] 
Then the following vectors are in $L_k$:
\newline
$18a^2\textbf{v}_1 + \textbf{v}_2 + (108a^3+12a)\textbf{v}_3 + (108a^3+6a)\textbf{v}_4 - (18a^2+1)\textbf{v}_5 = (648a^4+90a^2+1, -1, 0, 0, 0),$
\newline
$(36a^3-6a^2-a)\textbf{v}_1 + 2a\textbf{v}_2 + (216a^4-36a^3+18a^2-6a-1)\textbf{v}_3 + (216a^4-36a^3+6a^2-4a-1)\textbf{v}_4 - (36a^3-6a^2+a-1)\textbf{v}_5 = (1296a^5-216a^4+144a^3-42a^2-3a-1, 0, -1, 0, 0),$
\newline
$(36a^3-6a^2+2a)\textbf{v}_1 + 2a\textbf{v}_2 + (216a^4-36a^3+36a^2-6a)\textbf{v}_3 + (216a^4+-36a^3+24a^2-4a-1)\textbf{v}_4 - (36a^3-6a^2+4a-1)\textbf{v}_5 = (1296a^5-216a^4+252a^3-42a^2+6a-1, 0, 0, -1, 0),$
\newline
$(36a^3-6a^2+2a)\textbf{v}_1 + 2a\textbf{v}_2 + (216a^4-36a^3+36a^2-6a+1)\textbf{v}_3 + (216a^4-36a^3+24a^2-4a)\textbf{v}_4 - (36a^3-6a^2+4a-1)\textbf{v}_5 = (1296a^5-216a^4+252a^3-42a^2+12a-1, 0, 0, 0, -1).$
\newline

Hence we have $\textbf{e}_2 =  (648a^4+90a^2+1)\textbf{e}_1,  \textbf{e}_3 =  (1296a^5-216a^4+144a^3-42a^2-3a-1)\textbf{e}_1, \textbf{e}_4 =  (1296a^5-216a^4+252a^3-42a^2+6a-1)\textbf{e}_1$,  and $\textbf{e}_5 =  (1296a^5-216a^4+252a^3-42a^2+12a-1)\textbf{e}_1$ in $\Z^5/L_k$, and so $\textbf{e}_1$ generates $\Z^5/L_k$.

Also det $\left ( \begin{array} {c} \textbf{v}_1 \\ \textbf{v}_2 \\ \textbf{v}_3 \\ \textbf{v}_4 \\ \textbf{v}_5 \end {array} \right )$

$ = $ det
$\left (
\begin{array} {l r r r r}
3888a^5+648a^3+15a & 0 & 0 & 0 & 0 \\
648a^4+90a^2+1 & -1 & 0 & 0 & 0 \\
1296a^5-216a^4+144a^3-42a^2-3a-1 & 0 & -1 & 0 & 0 \\
1296a^5-216a^4+252a^3-42a^2+6a-1 & 0 & 0 & -1 & 0 \\
1296a^5-216a^4+252a^3-42a^2+12a-1 & 0 & 0 & 0 & -1  
\end {array} \right )$ 
\newline
\newline
$= 3888a^5+648a^3+15a = (512k^5+1280k^4+6080k^3+7840k^2+10010k+3741)/3125= L_{CC}(10,k)$, as in the statement of the theorem. 

Thus $\Z^5/L_k$ is isomorphic to $\Z_{L_{CC}(10,k)}$ via an isomorphism taking $\textbf{e}_1, \textbf{e}_2, \textbf{e}_3, \textbf{e}_4, \textbf{e}_5$ respectively to $1, 648a^4+90a^2+1, 1296a^5-216a^4+144a^3-42a^2-3a-1, 1296a^5-216a^4+252a^3-42a^2+6a-1, 1296a^5-216a^4+252a^3-42a^2+12a-1 $. As $a=(2k+1)/15$ this gives the generating set specified in the theorem: $\{1,
(128k^4+256k^3+1192k^2+1064k+883)/625,
(512k^5+640k^4+3200k^3-2520k^2-8510k-12624)/9375,
(512k^5+640k^4+5600k^3+1080k^2+4540k-6699)/9375,
(512k^5+640k^4+5600k^3+1080k^2+12040k-2949)/9375\}$.

It remains to show that $S_{5,k}+L_k=\Z^5$. For $k=7$ it is straightforward to show directly that $\Z_{4551}$ with generators $1, 739, 1178, 1295, 1301$ has diameter 7, and also that $\Z_{962325}$ with generators $1, 53299, 300932, 303875, 303893$ has diameter 22. So we assume $k\geq37$, so that $a \geq 5$. Now let
\[
\begin{array}{rll}
\textbf{v}_6&=       \textbf{v}_1  +\textbf{v}_2   -\textbf{v}_4&=     (3a+1,3a,-3a,-3a+1,3a)\\
\textbf{v}_7&=       \textbf{v}_1  -\textbf{v}_4   +\textbf{v}_5&=    (3a,3a+1,3a,-3a+1,-3a)\\
\textbf{v}_8&=       \textbf{v}_2  -\textbf{v}_4  +\textbf{v}_5&=     (3a-1,3a-1,-3a-1,3a,-3a-1)\\
\textbf{v}_9&=       \textbf{v}_3  +\textbf{v}_4    -\textbf{v}_5&=   (3a+1,-3a,-3a+1,-3a,3a)\\
\textbf{v}_{10}&=  \textbf{v}_1  +\textbf{v}_3   -\textbf{v}_6&=   (3a,-3a+1,3a+1,-3a,-3a)\\
\textbf{v}_{11}&=  \textbf{v}_2  +\textbf{v}_3  -\textbf{v}_6&=     (3a-1,-3a-1,-3a,3a-1,-3a-1)\\
\textbf{v}_{12}&=  \textbf{v}_3  +\textbf{v}_4  -\textbf{v}_6&=    (3a-1,-3a,3a,3a,-3a-1)\\
\textbf{v}_{13}&= -\textbf{v}_4  +\textbf{v}_5  +\textbf{v}_6&=    (3a,3a,-3a,-3a,-3a)\\
\textbf{v}_{14}&=  \textbf{v}_3  +\textbf{v}_4  -\textbf{v}_7&=     (3a,-3a-1,-3a,3a,3a-1)\\
\textbf{v}_{15}&=  \textbf{v}_3  +\textbf{v}_4  -\textbf{v}_8&=      (3a+1,-3a+1,3a+1,-3a+1,3a)\\
\textbf{v}_{16}&=  \textbf{v}_4  -\textbf{v}_6   +\textbf{v}_9&=    (3a,-3a,3a,3a+1,3a-1).
\end{array}
\]
Then the 32 vectors $\pm\textbf{v}_i$ for $i=1,...,16$ provide one element of $L_k$ lying strictly within each of the 32 orthants of $\Z^5$. Most of the coordinates of these vectors have absolute value at most $3a+1$. Only $\pm \textbf{v}_4$ have one coordinate with absolute value equal to $3a+2$.
As in the proof of Theorem \ref {theorem:100A} any vector $\textbf{x}$ is reduced to a vector $\textbf{x}'$ with coordinates with absolute value at most $3a+2$ by successively subtracting appropriate vectors $\textbf{v}=\pm \textbf{v}_i$. If a coordinate of $\textbf{x}$ is 0 then either sign is allowed for $\textbf{v}$ as long as the corresponding coordinate of $\textbf{v}$ has absolute value $\leq 3a+1$. So if the $\textbf{e}_4$ coordinate of $\textbf{x}$ is 0 then we avoid $\textbf{v}_4$ and take $\textbf{v}_1$ instead.

If the resulting vector $\textbf{x}'$ lies between $\textbf{0}$ and $\textbf{v}$, where $\textbf{v}=\pm\textbf{v}_i$ for some $i$, then we have $\delta(\textbf{0},\textbf{x}')+\delta(\textbf{x}',\textbf{v})=\delta(\textbf{0},\textbf{v})$. All of the vectors $\textbf{v}$ satisfy $\delta(\textbf{0},\textbf{v})=15a$, so we have $\delta(\textbf{0},\textbf{v})=2k+1$. Hence $\textbf{x}' \in S_{5,k}+L_k$, so that $\textbf{x} \in S_{5,k}+L_k$ as required. 
Now we are left with the case where the absolute value of each coordinate of the reduced $\textbf{x}$ is at most $3a+1$, and $\textbf{x}$ is in the orthant of $\textbf{v}$, where $\textbf{v} = \pm \textbf{v}_i$ for some $i \leq 16$ but does not lie between $\textbf{0}$ and $\textbf{v}$.
\end{proof}


\subsection {Existence proof for degree 10 circulant graphs of order $L_{CC}(10,k)$ for all diameters $k \equiv 12 \pmod {15}$}

For graphs of diameter $k \equiv 2\pmod {5}$ we now consider the third subcase where $k \equiv 12 \pmod {15}$. The existence of the degree 10 circulant graph of order $L_{CC}(10,k)$ for all diameters $k \equiv 12\pmod {15}$, with generating set 2 of Table \ref {table:102A}, is proved following the same method.

\begin{theorem}
For all $k\equiv 12 \pmod {15}$, there is an undirected Cayley graph on five generators of a cyclic group which has diameter k and order $L_{CC}(10,k)$, where
\[ L_{CC}(10,k)=(512k^5+1280k^4+6080k^3+7840k^2+10010k+3741)/3125. \\
\]

Moreover, a generating set is
$\{1,
(128k^4+256k^3+1192k^2+1064k+883)/625,
(512k^5+1920k^4+6560k^3+14600k^2+7980k+10431)/9375,
(512k^5+1920k^4+6560k^3+14600k^2+15480k+14181)/9375,
(512k^5+1920k^4+8960k^3+18200k^2+28530k+20106)/9375\}$.

\label{theorem:102C}
\end{theorem}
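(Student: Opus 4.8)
The plan is to follow verbatim the template of Theorem~\ref{theorem:100A} and Theorem~\ref{theorem:102A}: exhibit an explicit five-dimensional lattice $L_k\subseteq\Z^5$, check that $\Z^5/L_k$ is cyclic of the stated order, and verify the covering condition $S_{5,k}+L_k=\Z^5$. The conclusion that the associated Cayley graph has diameter at most $k$ then follows from Theorem~\ref{theoremEvenlattice}, while maximality among known degree $10$ graphs is already recorded in Table~\ref{table:102A}.

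First I would fix the reparametrisation $a=(2k-9)/15$, which is a positive integer precisely when $k\equiv 12\pmod{15}$, $k\ge 12$, and take the five lattice basis vectors $\textbf{v}_1,\dots,\textbf{v}_5$ to have entries of the form $\pm3a+c$ with small constants $c$, so that $\delta(\textbf{0},\textbf{v}_i)=15a+10=2k+1$; the actual vectors are obtained by continuing the pattern visible in the $k\equiv2$ and $k\equiv7$ subcases and forcing agreement with generating set $2$ of Table~\ref{table:102A} for $k\equiv12\pmod{15}$ after clearing denominators. Two routine verifications then remain. For cyclicity, produce explicit integer combinations $\sum_i\lambda_i\textbf{v}_i$ equal to $(N_2,-1,0,0,0)$, $(N_3,0,-1,0,0)$, $(N_4,0,0,-1,0)$ and $(N_5,0,0,0,-1)$ for suitable polynomials $N_j$ in $a$, so that $\textbf{e}_j=N_j\textbf{e}_1$ in $\Z^5/L_k$ and $\textbf{e}_1$ generates. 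For the order, using those four vectors as replacement rows reduces the matrix $M$ of the $\textbf{v}_i$ to lower triangular form with diagonal $(\det M,-1,-1,-1,-1)$, and one checks $\det M=3888a^5+\cdots$ equals $L_{CC}(10,k)=(512k^5+1280k^4+6080k^3+7840k^2+10010k+3741)/3125$ on substituting $a=(2k-9)/15$. This pins down the isomorphism $\Z^5/L_k\cong\Z_{L_{CC}(10,k)}$ and the generating set in the statement.

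The substantive part is the covering claim $S_{5,k}+L_k=\Z^5$, i.e.\ that every $\textbf{x}\in\Z^5$ lies within $\ell^1$-distance $k$ of a lattice point. As in Theorem~\ref{theorem:100A} I would first settle the smallest diameters ($k=12$, and one further value such as $k=27$) by direct computation, then assume $a$ large. I would adjoin auxiliary lattice vectors $\textbf{v}_6,\dots,\textbf{v}_{16}$, each a short $\pm1$-combination of $\textbf{v}_1,\dots,\textbf{v}_5$, chosen so that the $32$ vectors $\pm\textbf{v}_i$, $1\le i\le16$, place one representative strictly inside each orthant of $\Z^5$ with all coordinates bounded in absolute value by about $3a+1$; one must isolate the exceptional vector whose longest coordinate reaches $3a+2$ or $3a+3$ and record which substitute to use when the corresponding coordinate of $\textbf{x}$ vanishes. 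Repeatedly subtracting the sign-matched orthant vector reduces an arbitrary $\textbf{x}$ to one whose coordinates are all at most $3a+1$ in absolute value; whenever this reduced $\textbf{x}$ \emph{lies between} $\textbf{0}$ and some $\pm\textbf{v}_i$ we are done, since $\delta(\textbf{0},\pm\textbf{v}_i)=2k+1$ forces one of the two segments to have length at most $k$.

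The real obstacle, and the only step that is not mechanical, is the residual \emph{boundary exceptions}: the finitely many reduced $\textbf{x}$ lying in the orthant of some $\textbf{v}_i$ but not between $\textbf{0}$ and $\textbf{v}_i$. By centrosymmetry of $L_k$ only the sixteen orthants of $\textbf{v}_1,\dots,\textbf{v}_{16}$ need treating, and in each there is a bounded list of faces (an $\textbf{x}$-coordinate equal to $\pm(3a+c)$ on the wrong side of $\textbf{v}_i$) and sub-faces, exactly mirroring the fifteen-case analysis displayed for the orthant of $\textbf{v}_1$ in the proof of Theorem~\ref{theorem:100A}. For each such $\textbf{x}$ one exhibits a short explicit chain $\textbf{x}\mapsto\textbf{x}-\textbf{v}_i\mapsto\cdots$ of additions of $\pm\textbf{v}_j$ terminating at a vector that provably lies between $\textbf{0}$ and some $\pm\textbf{v}_j$; the chains are polynomial in $a$ and verified by linear arithmetic in $a$. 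Following the convention of Section~\ref{section:proof10}, I would display only the orthant of $\textbf{v}_1$ as a representative sample and note that the remaining orthants, enumerated and checked with a tailored program, are available on request.
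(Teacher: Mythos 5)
Your proposal follows essentially the same route as the paper's proof: the same lattice construction verified via Theorem \ref{theoremEvenlattice}, cyclicity from explicit combinations yielding $(N_j,0,\dots,-1,\dots,0)$, the determinant computation with leading term $3888a^5$, direct verification at $k=12$, and the orthant-vector reduction with residual boundary exceptions deferred to a case check. The only difference is the cosmetic reparametrisation $a=(2k-9)/15$ in place of the paper's $a=(2k+6)/15$ (a shift by $1$), which correspondingly shifts the constants in the coordinate bounds and in $\delta(\textbf{0},\textbf{v}_i)=2k+1$ but changes nothing of substance.
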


\begin{proof}
We will show the existence of five-dimensional lattices $L_k \subseteq \Z^5$ such that $\Z^5/L_k$ is cyclic, $S_{5,k}+L_k=\Z^5$, where $S_{5,k}$ is the set of points in $\Z^5$ at a distance of at most $k$ from the origin under the $\ell^1$ (Manhattan) metric, and $\vert \Z^5 : L_k\vert = L_{CC}(10,k) $ as specified in the theorem. Then, by Theorem \ref {theoremD}, the resultant Cayley graph has diameter at most $k$. 

Let $a= (2k+6)/15$, and let $L_k$ be defined by five generating vectors as follows:
\[
\begin{array}{rcl}
\textbf{v}_1&=&(3a-1,-3a+1,3a-1,3a,3a-2)\\
\textbf{v}_2&=&(3a-1,3a-1,3a-1,3a-1,3a-1)\\
\textbf{v}_3&=&(3a,3a,-3a+1,3a-3,-3a+1)\\
\textbf{v}_4&=&(3a-2,-3a,3a,-3a+2,3a-1)\\
\textbf{v}_5&=&(3a,3a-1,-3a+1,3a-2,3a-1)\\
\end{array}
\] 
Then the following vectors are in $L_k$:
\newline
$(18a^2-6a)\textbf{v}_1  - (18a^2-12a+3)\textbf{v}_2 + (108a^3-108a^2+42a-7)\textbf{v}_3 + (108a^3-126a^2+54a-8)\textbf{v}_4 + (10a^2-12a+4) \textbf{v}_5= (648a^4-864a^3+522a^2-156a+19, -1, 0, 0, 0),$
\newline
$(36a^3-18a^2+2a)\textbf{v}_1 - (36a^3-30a^2+10a-1)\textbf{v}_2 + (216a^4-252a^3+120a^2-26a+1)\textbf{v}_3 + (216a^4-288a^3+150a^2-32a+1)\textbf{v}_4 + (36a^3-30a^2+12a-1)\textbf{v}_5 = (1296a^5-1944a^4+1332a^3-474a^2+78a-3, 0, -1, 0, 0),$
\newline
$(36a^3-18a^2+2a)\textbf{v}_1 - (36a^3-30a^2+10a-1)\textbf{v}_2 + (216a^4-252a^3+120a^2-26a+2)\textbf{v}_3 + (216a^4-288a^3+150a^2-32a+2)\textbf{v}_4 + (36a^3-30a^2+12a-1)\textbf{v}_5 = (1296a^5-1944a^4+1332a^3-474a^2+84a-5, 0, 0, -1, 0),$
\newline
$(36a^3-18a^2+5a)\textbf{v}_1 - (36a^3-30a^2+13a-2)\textbf{v}_2 + (216a^4-252a^3+138a^2-38a+4)\textbf{v}_3 + (216a^4-288a^3+168a^2-47a+5)\textbf{v}_4 + (36a^3-30a^2+15a-2)\textbf{v}_5 = (1296a^5-1944a^4+1440a^3-582a^2+129a-12, 0, 0, 0, -1).$

Hence we have $\textbf{e}_2 =  (648a^4-864a^3+522a^2-156a+19)\textbf{e}_1,  \textbf{e}_3 =  (1296a^5-1944a^4+1332a^3-474a^2+78a-3)\textbf{e}_1, \textbf{e}_4 =  (1296a^5-1944a^4+1332a^3-474a^2+84a-5)\textbf{e}_1$,  and $\textbf{e}_5 =  (1296a^5-1944a^4+1440a^3-582a^2+129a-12)\textbf{e}_1$ in $\Z^5/L_k$, and so $\textbf{e}_1$ generates $\Z^5/L_k$.

Also det $\left ( \begin{array} {c} \textbf{v}_1 \\ \textbf{v}_2 \\ \textbf{v}_3 \\ \textbf{v}_4 \\ \textbf{v}_5 \end {array} \right )$

$ = $ det
$\left (
\begin{array} {l r r r r}
3888a^5-6480a^4+4968a^3-2088a^2+471a-45 & 0 & 0 & 0 & 0 \\
648a^4-864a^3+522a^2-156a+19 & -1 & 0 & 0 & 0 \\
1296a^5-1944a^4+1332a^3-474a^2+78a-3 & 0 & -1 & 0 & 0 \\
1296a^5-1944a^4+1332a^3-474a^2+84a-5 & 0 & 0 & -1 & 0 \\
1296a^5-1944a^4+1440a^3-582a^2+129a-12 & 0 & 0 & 0 & -1  
\end {array} \right )$ 
\newline
\newline
$= 3888a^5-6480a^4+4968a^3-2088a^2+471a-45 = (512k^5+1280k^4+6080k^3+7840k^2+10010k+3741)/3125= L_{CC}(10,k)$, as in the statement of the theorem. 

Thus $\Z^5/L_k$ is isomorphic to $\Z_{L_{CC}(10,k)}$ via an isomorphism taking $\textbf{e}_1, \textbf{e}_2, \textbf{e}_3, \textbf{e}_4, \textbf{e}_5$ respectively to $1, 648a^4-864a^3+522a^2-156a+19, 1296a^5-1944a^4+1332a^3-474a^2+78a-3, 1296a^5-1944a^4+1332a^3-474a^2+84a-5, 1296a^5-1944a^4+1440a^3-582a^2+129a-12 $. As $a=(2k+6)/15$ this gives the generating set specified in the theorem: $\{1,
(128k^4+256k^3+1192k^2+1064k+883)/625,
(512k^5+1920k^4+6560k^3+14600k^2+7980k+10431)/9375,
(512k^5+1920k^4+6560k^3+14600k^2+15480k+14181)/9375,
(512k^5+1920k^4+8960k^3+18200k^2+28530k+20106)/9375\}$.

It remains to show that $S_{5,k}+L_k=\Z^5$. For $k=12$, it is straightforward to show directly that $\Z_{53025}$ with generators $1, 5251, 19281, 19291, 19806$ has diameter 12. So we assume $k\geq27$, so that $a \geq 4$. Now let
\[
\begin{array}{rll}
\textbf{v}_6&=       \textbf{v}_1  -\textbf{v}_2   +\textbf{v}_3&=     (3a,-3a+2,-3a+1,3a-2,-3a)\\
\textbf{v}_7&=      -\textbf{v}_1  +\textbf{v}_2   +\textbf{v}_4&=    (3a-2,3a-2,3a,-3a+1,3a)\\
\textbf{v}_8&=       \textbf{v}_1  -\textbf{v}_2  +\textbf{v}_5&=     (3a,-3a+1,-3a+1,3a-1,3a-2)\\
\textbf{v}_9&=      -\textbf{v}_2  +\textbf{v}_3    +\textbf{v}_4&=   (3a-1,-3a+1,-3a+2,-3a,-3a+1)\\
\textbf{v}_{10}&=  -\textbf{v}_2  +\textbf{v}_4   +\textbf{v}_5&=   (3a-1,-3a,-3a+2,-3a+1,3a-1)\\
\textbf{v}_{11}&=  \textbf{v}_3  +\textbf{v}_4  -\textbf{v}_5&=     (3a-2,-3a+1,3a,-3a+1,-3a+1)\\
\textbf{v}_{12}&=  \textbf{v}_2  -\textbf{v}_5  +\textbf{v}_6&=    (3a-1,-3a+2,3a-1,3a-1,-3a)\\
\textbf{v}_{13}&= -\textbf{v}_2  +\textbf{v}_3  +\textbf{v}_7&=    (3a-1,3a-1,-3a+2,-3a-1,-3a+2)\\
\textbf{v}_{14}&=  -\textbf{v}_2  +\textbf{v}_5  +\textbf{v}_7&=     (3a-1,3a-2,-3a+2,-3a,3a)\\
\textbf{v}_{15}&=  \textbf{v}_3  -\textbf{v}_5  +\textbf{v}_7&=      (3a-2,3a-1,3a,-3a,-3a+2)\\
\textbf{v}_{16}&=  \textbf{v}_1  +\textbf{v}_3   -\textbf{v}_8&=    (3a-1,3a,3a-1,3a-2,-3a+1).
\end{array}
\]
Then the 32 vectors $\pm\textbf{v}_i$ for $i=1,...,16$ provide one element of $L_k$ lying strictly within each of the 32 orthants of $\Z^5$. Most of the coordinates of these vectors have absolute value at most $3a$. Only $\pm \textbf{v}_{13}$ have one coordinate with absolute value equal to $3a+1$.
As in the proof of Theorem \ref {theorem:100A} any vector $\textbf{x}$ is reduced to a vector $\textbf{x}'$ with coordinates with absolute value at most $3a$ by successively subtracting appropriate vectors $\textbf{v}=\pm \textbf{v}_i$.
If a coordinate of $\textbf{x}$ is 0 then either sign is allowed for $\textbf{v}$ as long as the corresponding coordinate of $\textbf{v}$ has absolute value $\leq 3a$. So if the $\textbf{e}_4$ coordinate of $\textbf{x}$ is 0 then we avoid $\textbf{v}_{13}$ and take $\textbf{v}_3$ instead.

If the resulting vector $\textbf{x}'$ lies between $\textbf{0}$ and $\textbf{v}$, where $\textbf{v}=\pm\textbf{v}_i$ for some $i$, then we have $\delta(\textbf{0},\textbf{x}')+\delta(\textbf{x}',\textbf{v})=\delta(\textbf{0},\textbf{v})$. All of the vectors $\textbf{v}$ satisfy $\delta(\textbf{0},\textbf{v})=15a-5$, so we have $\delta(\textbf{0},\textbf{v})=2k+1$. Hence $\textbf{x}' \in S_{5,k}+L_k$, so that $\textbf{x} \in S_{5,k}+L_k$ as required. 
Now we are left with the case where the absolute value of each coordinate of the reduced $\textbf{x}$ is at most $3a$, and $\textbf{x}$ is in the orthant of $\textbf{v}$, where $\textbf{v} = \pm \textbf{v}_i$ for some $i \leq 16$ but does not lie between $\textbf{0}$ and $\textbf{v}$.
\end{proof}


\subsection {Existence proof for degree 10 circulant graphs of order $L_{CC}(10,k)$ for all diameters $k \equiv 3 \pmod 5$}

The existence of the degree 10 circulant graph of order $L_{CC}(10,k)$ for all diameters $k \equiv 3\pmod 5$, with generating set 1 of Table \ref {table:103A}, is proved following the same method.

\begin{theorem}
For all $k\equiv 3 \pmod 5$, there is an undirected Cayley graph on five generators of a cyclic group which has diameter k and order $L_{CC}(10,k)$, where
\[ L_{CC}(10,k)=(512k^5+1280k^4+6560k^3+7600k^2+4180k+1344)/3125. \\
\]

Moreover, a generating set is
$\{1,
(128k^4+64k^3+1112k^2-1224k-1557)/625,
(256k^4+448k^3+2944k^2+1592k+271)/625,
(256k^4+608k^3+3104k^2+3062k+726)/625,
(384k^4+672k^3+4216k^2+2338k-456)/625\}$.

\label{theorem:103A}
\end{theorem}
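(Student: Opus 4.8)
The plan is to follow the proof of Theorem~\ref{theorem:100A} step for step, changing only the numerical data to suit the residue class $k\equiv 3\pmod 5$. By the even-degree lattice criterion, Theorem~\ref{theoremEvenlattice}, it suffices to exhibit a rank-five sublattice $L_k\subseteq\Z^5$ for which $\Z^5/L_k$ is cyclic, $\lvert\Z^5:L_k\rvert=L_{CC}(10,k)$, and $S_{5,k}+L_k=\Z^5$; the associated circulant graph then has diameter at most $k$ (equality being handled as in the earlier theorems). Since $k\equiv 3\pmod 5$ gives $2k\equiv 1\pmod 5$, I would set $a=(2k+4)/5\in\Z$ and take $L_k$ generated by five vectors $\textbf{v}_1,\dots,\textbf{v}_5\in\Z^5$ whose entries are each of the form $\pm a+O(1)$, arranged so that $\delta(\textbf{0},\textbf{v}_i)=5a-3=2k+1$ for every $i$. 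As in all the earlier cases the precise entries are obtained by hand-tuning so that the next two steps succeed; the generators listed in the statement (a degree-four polynomial in $a$, with denominator $5^4=625$) fix what the images $N_i$ below must be.

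The two algebraic verifications are routine symbolic computation. First, exhibit four explicit $\Z$-linear combinations of $\textbf{v}_1,\dots,\textbf{v}_5$ equal to $(N_j,0,\dots,0,-1,0,\dots,0)$ with the $-1$ in coordinate $j$, for $j=2,3,4,5$; these show $\textbf{e}_j=N_j\textbf{e}_1$ in $\Z^5/L_k$, so $\textbf{e}_1$ generates and the quotient is cyclic. Using these relations to clear columns $2,\dots,5$ of the matrix $(\textbf{v}_1,\dots,\textbf{v}_5)^{\mathrm T}$, the determinant reduces to a single quintic in $a$, which should simplify to $16a^5-48a^4+84a^3-84a^2+42a-8=(512k^5+1280k^4+6560k^3+7600k^2+4180k+1344)/3125=L_{CC}(10,k)$. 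Substituting $a=(2k+4)/5$ into $1,N_2,N_3,N_4,N_5$ recovers the generating set of the statement. Nothing here is delicate; it only needs to be carried out correctly.

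The substance is the covering condition $S_{5,k}+L_k=\Z^5$, i.e.\ that every $\textbf{x}\in\Z^5$ lies within $\ell^1$-distance $k$ of $L_k$. I would first settle the small diameters ($k=3$, $k=8$, and if needed $k=13$) by a direct computation on the corresponding circulant graph, and then assume $a$ large enough to dominate the absolute constants below. As in Theorem~\ref{theorem:100A}, adjoin eleven further lattice vectors $\textbf{v}_6,\dots,\textbf{v}_{16}$, each a short $\pm1$-combination of $\textbf{v}_1,\dots,\textbf{v}_5$, so that $\pm\textbf{v}_1,\dots,\pm\textbf{v}_{16}$ place one lattice point strictly inside each of the $32$ orthants of $\Z^5$, with all coordinates of absolute value at most $a+1$ apart from a few equal to $a+2$. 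Given arbitrary $\textbf{x}$, repeatedly subtract whichever $\pm\textbf{v}_i$ matches the signs of $\textbf{x}$ (taking care, when a coordinate of $\textbf{x}$ is $0$, to choose a vector whose matching coordinate is small) to reduce $\textbf{x}$ to a vector all of whose coordinates have absolute value at most $a+1$. If the reduced $\textbf{x}$ lies between $\textbf{0}$ and the chosen $\textbf{v}=\pm\textbf{v}_i$, then $\delta(\textbf{0},\textbf{x})+\delta(\textbf{x},\textbf{v})=\delta(\textbf{0},\textbf{v})=2k+1$, so one summand is $\le k$ and $\textbf{x}\in S_{5,k}+L_k$.

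The main obstacle, as in every theorem of this kind, is the residual family of \emph{boundary exceptions}: reduced vectors lying in the orthant of some $\textbf{v}_i$ but not between $\textbf{0}$ and $\textbf{v}_i$, together with the tie-breaking forced when $\textbf{x}$ has zero coordinates. By centrosymmetry of $L_k$ only the sixteen orthants of $\textbf{v}_1,\dots,\textbf{v}_{16}$ need be treated, and for each one splits into a dozen or so sub-cases according to which coordinates have reached the forbidden value $\pm(a+1)$, exhibiting in each sub-case a short chain of lattice translations that lands the point between $\textbf{0}$ and some $\pm\textbf{v}_j$. Choosing $\textbf{v}_6,\dots,\textbf{v}_{16}$ and these reduction chains so that the argument never stalls is the only genuinely creative step; the resulting verification is finite and mechanical (and, as the author notes, was run on a computer). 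Accordingly, following the convention adopted above for classes $1$ and $2$, I would present in full the data $L_k$, the four cyclicity relations, the determinant, and the list $\textbf{v}_6,\dots,\textbf{v}_{16}$, and then refer to the machine-checked orthant analysis for the boundary exceptions.
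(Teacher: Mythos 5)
Your strategy is exactly the paper's: invoke Theorem~\ref{theoremEvenlattice}, exhibit a rank-five lattice $L_k$ with cyclic quotient of order $L_{CC}(10,k)$, and verify the covering condition $S_{5,k}+L_k=\Z^5$ by reducing an arbitrary point using one short lattice vector per orthant and then clearing boundary exceptions. Your bookkeeping is also consistent with the paper's: the paper takes $a=(2k-1)/5$ rather than your $(2k+4)/5$, and your claimed determinant $16a^5-48a^4+84a^3-84a^2+42a-8$ is precisely the paper's $16a^5+32a^4+52a^3+40a^2+14a+2$ under the shift $a\mapsto a-1$; both equal $L_{CC}(10,k)$, and your norm count $5a-3=2k+1$ matches the paper's $5a+2=2k+1$ under the same shift.

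The genuine gap is that you never produce the five generating vectors, and their existence is the entire content of the theorem. ``Hand-tuning so that the next two steps succeed'' is not an argument: one must actually exhibit integer vectors of $\ell^1$-norm $2k+1$ that simultaneously (i) make $\Z^5/L_k$ cyclic with $\textbf{e}_j=N_j\textbf{e}_1$ for the prescribed $N_j$, (ii) give determinant exactly $L_{CC}(10,k)$ rather than merely something $\le\vert S_{5,k}\vert$, and (iii) admit thirty-two short orthant representatives supporting the reduction. The paper's choice (with $a=(2k-1)/5$) is $\textbf{v}_1=(a+2,-a,a-1,-a,a+1)$, $\textbf{v}_2=(a-1,a,a+2,a,-a-1)$, $\textbf{v}_3=(a+1,a+1,-a-1,-a+1,a)$, $\textbf{v}_4=(a,a,a+1,-a-1,-a)$, $\textbf{v}_5=(a+1,-a+1,-a-1,a+2,a-1)$, together with explicit integer combinations realising $\textbf{e}_2,\dots,\textbf{e}_5$ as multiples of $\textbf{e}_1$ and eleven further vectors $\textbf{v}_6,\dots,\textbf{v}_{16}$; six of the sixteen pairs $\pm\textbf{v}_i$ have one coordinate of absolute value $a+2$, which is what forces the tie-breaking rules you allude to for zero coordinates. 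Note also the base cases: the generic argument needs $a\ge 5$, i.e.\ $k\ge 13$, so $k=3$ and $k=8$ both fall outside it (the paper verifies $k=8$ directly). Without the explicit lattice data none of (i)--(iii) can be checked, so what you have is a correct plan, not a proof.
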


\begin{proof}
We will show the existence of five-dimensional lattices $L_k \subseteq \Z^5$ such that $\Z^5/L_k$ is cyclic, $S_{5,k}+L_k=\Z^5$, where $S_{5,k}$ is the set of points in $\Z^5$ at a distance of at most $k$ from the origin under the $\ell^1$ (Manhattan) metric, and $\vert \Z^5 : L_k\vert = L_{CC}(10,k) $ as specified in the theorem. Then, by Theorem \ref {theoremD}, the resultant Cayley graph has diameter at most $k$. 

Let $a= (2k-1)/5$, and let $L_k$ be defined by five generating vectors as follows:
\[
\begin{array}{rcl}
\textbf{v}_1&=&(a+2,-a,a-1,-a,a+1)\\
\textbf{v}_2&=&(a-1,a,a+2,a,-a-1)\\
\textbf{v}_3&=&(a+1,a+1,-a-1,-a+1,a)\\
\textbf{v}_4&=&(a,a,a+1,-a-1,-a)\\
\textbf{v}_5&=&(a+1,-a+1,-a-1,a+2,a-1)\\
\end{array}
\] 
Then the following vectors are in $L_k$:
\newline
$(2a^2+2a-1)\textbf{v}_1 - 2a^2\textbf{v}_2 - 2a\textbf{v}_3 + (4a^3+2a^2+4a-2)\textbf{v}_4 + (4a^3+2a^2+2a-1)\textbf{v}_5 = (8a^4+8a^3+14a^2-3, -1, 0, 0, 0),$
\newline
$(4a^2+6a+1)\textbf{v}_1 - (4a^2+2a+1)\textbf{v}_2 - (4a+2)\textbf{v}_3 + (8a^3+8a^2+12a+2)\textbf{v}_4 + (8a^3+8a^2+8a+2)\textbf{v}_5 = (16a^4+24a^3+40a^2+20a+3, 0, -1, 0, 0),$
\newline
$(4a^2+7a+2)\textbf{v}_1 - (4a^2+3a+1)\textbf{v}_2 - (4a+3)\textbf{v}_3 + (8a^3+10a^2+13a+4)\textbf{v}_4 + (8a^3+10a^2+9a+3)\textbf{v}_5 = (16a^4+28a^3+44a^2+27a+5, 0, 0, -1, 0),$
\newline
$(6a^2+9a+1)\textbf{v}_1 - (6a^2+3a+1)\textbf{v}_2 - (6a+3)\textbf{v}_3 + (12a^3+12a^2+17a+3)\textbf{v}_4 + (12a^3+12a^2+11a+3)\textbf{v}_5 = (24a^4+36a^3+58a^2+29a+3, 0, 0, 0, -1)$
\newline

Hence we have $\textbf{e}_2 =  (8a^4+8a^3+14a^2-3)\textbf{e}_1,  \textbf{e}_3 =  (16a^4+24a^3+40a^2+20a+3)\textbf{e}_1, \textbf{e}_4 =  (16a^4+28a^3+44a^2+27a+5)\textbf{e}_1$,     and $\textbf{e}_5 =  (24a^4+36a^3+58a^2+29a+3)\textbf{e}_1$ in $\Z^5/L_k$, and so $\textbf{e}_1$ generates $\Z^5/L_k$.

Also det $\left ( \begin{array} {c} \textbf{v}_1 \\ \textbf{v}_2 \\ \textbf{v}_3 \\ \textbf{v}_4 \\ \textbf{v}_5 \end {array} \right ) = $ det
$\left (
\begin{array} {l r r r r}
16a^5+32a^4+52a^3+40a^2+14a+2 & 0 & 0 & 0 & 0 \\
8a^4+8a^3+14a^2-3 & -1 & 0 & 0 & 0 \\
16a^4+24a^3+40a^2+20a+3 & 0 & -1 & 0 & 0 \\
16a^4+28a^3+44a^2+27a+5 & 0 & 0 & -1 & 0 \\
24a^4+36a^3+58a^2+29a+3 & 0 & 0 & 0 & -1  
\end {array} \right )$ 
\newline
\newline
$= 16a^5+32a^4+52a^3+40a^2+14a+2 = (512k^5+1280k^4+6560k^3+7600k^2+4180k+1344)/3125= L_{CC}(10,k)$, as in the statement of the theorem. 

Thus $\Z^5/L_k$ is isomorphic to $\Z_{L_{CC}(10,k)}$ via an isomorphism taking $\textbf{e}_1, \textbf{e}_2, \textbf{e}_3, \textbf{e}_4, \textbf{e}_5$ respectively to $1, 8a^4+8a^3+14a^2-3, 16a^4+24a^3+40a^2+20a+3, 16a^4+28a^3+44a^2+27a+5, 24a^4+36a^3+58a^2+29a+3 $. As $a=(2k-1)/5$ this gives the generating set specified in the theorem: $\{1,
(128k^4+64k^3+1112k^2-1224k-1557)/625,
(256k^4+448k^3+2944k^2+1592k+271)/625,
(256k^4+608k^3+3104k^2+3062k+726)/625,
(384k^4+672k^3+4216k^2+2338k-456)/625\}$.

It remains to show that $S_{5,k}+L_k=\Z^5$. For $k=8$ it is straightforward to show directly that $\Z_{8288}$ with generators $1, 987, 2367, 2534, 3528$ has diameter 8. So we assume $k\geq13$, so that $a \geq 5$. Now let
\[
\begin{array}{rll}
\textbf{v}_6&=      \textbf{v}_1  +\textbf{v}_2  -\textbf{v}_4&=     (a+1,-a,a,a+1,a)\\
\textbf{v}_7&=      \textbf{v}_2  +\textbf{v}_3   -\textbf{v}_4&=    (a,a+1,-a,a+2,a-1)\\
\textbf{v}_8&=     -\textbf{v}_2  +\textbf{v}_4  +\textbf{v}_5&=    (a+2,-a+1,-a-2,-a+1,a)\\
\textbf{v}_9&=      -\textbf{v}_3  +\textbf{v}_4   +\textbf{v}_5&=   (a,-a,a+1,a,-a-1)\\
\textbf{v}_{10}&= \textbf{v}_3  -\textbf{v}_5   +\textbf{v}_6&=    (a+1,a,a,-a,a+1)\\
\textbf{v}_{11}&= \textbf{v}_4 +\textbf{v}_5  -\textbf{v}_6&=     (a,a+1,-a,-a,-a-1)\\
\textbf{v}_{12}&= \textbf{v}_4 +\textbf{v}_5   -\textbf{v}_7&=    (a+1,-a,a,-a-1,-a)\\
\textbf{v}_{13}&=-\textbf{v}_1 +\textbf{v}_2   +\textbf{v}_8&=    (a-1,a+1,-a+1,a+1,-a-2)\\
\textbf{v}_{14}&= \textbf{v}_1  +\textbf{v}_7  -\textbf{v}_8&=     (a,a,a+1,a+1,a)\\
\textbf{v}_{15}&=-\textbf{v}_1 +\textbf{v}_8  +\textbf{v}_9&=     (a,-a+1,-a,a+1,-a-2)\\
\textbf{v}_{16}&=-\textbf{v}_6  +\textbf{v}_8   +\textbf{v}_9&=  (a+1,-a+1,-a-1,-a,-a-1).
\end{array}
\]
Then the 32 vectors $\pm\textbf{v}_i$ for $i=1,...,16$ provide one element of $L_k$ lying strictly within each of the 32 orthants of $\Z^5$. Most of the coordinates of these vectors have absolute value at most $a+1$. However vectors $\pm \textbf{v}_1, \pm \textbf{v}_2, \pm \textbf{v}_5, \pm \textbf{v}_7, \pm \textbf{v}_8$ and $\pm \textbf{v}_{13}$ each have one coordinate with absolute value equal to $a+2$.
As in the proof of Theorem \ref {theorem:100A} any vector $\textbf{x}$ is reduced to a vector $\textbf{x}'$ with coordinates with absolute value at most $3a$ by successively subtracting appropriate vectors $\textbf{v}=\pm \textbf{v}_i$.
If a coordinate of $\textbf{x}$ is 0 then either sign is allowed for $\textbf{v}$ as long as the corresponding coordinate of $\textbf{v}$ has absolute value $\leq a+1$.
So if $\textbf{x}$ lies in the orthant of $\textbf{v}_1$ and its $\textbf{e}_1$ coordinate is 0 then we take $-\textbf{v}_{11}$ instead.
If $\textbf{x}$ lies in the orthant of $\textbf{v}_2$ and its $\textbf{e}_3$ coordinate is 0 then we take $\textbf{v}_{11}$ instead.
If $\textbf{x}$ lies in the orthant of $\textbf{v}_7$ and its $\textbf{e}_4$ coordinate is 0 then we take $\textbf{v}_3$ instead.
If $\textbf{x}$ lies in the orthant of $\textbf{v}_{13}$ and its $\textbf{e}_5$ coordinate is 0 then we take $\textbf{v}_7$ instead, unless its $\textbf{e}_4$ coordinate is also 0, in which case we take $\textbf{v}_3$ instead, as above.
If $\textbf{x}$ lies in the orthant of $\textbf{v}_8$ and its $\textbf{e}_3$ coordinate is 0 then we consider $\textbf{v}_1$ instead, with the provisos stated above.
Finally if $\textbf{x}$ lies in the orthant of $\textbf{v}_5$ and its $\textbf{e}_4$ coordinate is 0 then we consider $\textbf{v}_8$ instead, with the provisos stated above.

If the resulting vector $\textbf{x}'$ lies between $\textbf{0}$ and $\textbf{v}$, where $\textbf{v}=\pm\textbf{v}_i$ for some $i$, then we have $\delta(\textbf{0},\textbf{x}')+\delta(\textbf{x}',\textbf{v})=\delta(\textbf{0},\textbf{v})$. All of the vectors $\textbf{v}$ satisfy $\delta(\textbf{0},\textbf{v})=5a+2$, so we have $\delta(\textbf{0},\textbf{v})=2k+1$. Hence $\textbf{x}' \in S_{5,k}+L_k$, so that $\textbf{x} \in S_{5,k}+L_k$ as required. 
Now we are left with the case where the absolute value of each coordinate of the reduced $\textbf{x}$ is at most $a+1$, and $\textbf{x}$ is in the orthant of $\textbf{v}$, where $\textbf{v} = \pm \textbf{v}_i$ for some $i \leq 16$ but does not lie between $\textbf{0}$ and $\textbf{v}$.
\end{proof}


\subsection {Existence proof for degree 10 circulant graphs of order $L_{CC}(10,k)$ for all diameters $k \equiv 4 \pmod 5$}

The existence of the degree 10 circulant graph of order $L_{CC}(10,k)$ for all diameters $k \equiv 4\pmod 5$, with generating set 1 of Table \ref {table:104A}, is proved following the same method.

\begin{theorem}
For all $k\equiv 4 \pmod 5$, there is an undirected Cayley graph on five generators of a cyclic group which has diameter k and order $L_{CC}(10,k)$, where
\[ L_{CC}(10,k)=(512k^5+1280k^4+6400k^3+8640k^2+6890k+757)/3125. \\
\]

Moreover, a generating set is
$\{1,
(32k^3+56k^2+316k+167)/125,
(128k^4+192k^3+1408k^2+1002k+908)/625,
(128k^4+352k^3+1488k^2+2432k+543)/625,
(256k^4+384k^3+2616k^2+1354k+116)/625\}$.

\label{theorem:104A}
\end{theorem}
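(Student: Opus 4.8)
The plan is to follow the template of Theorems \ref{theorem:100A}--\ref{theorem:103A}: exhibit an explicit integer lattice $L_k\subseteq\Z^5$, check that $\Z^5/L_k$ is cyclic of order $L_{CC}(10,k)$, prove $S_{5,k}+L_k=\Z^5$, and then invoke the even-degree lattice criterion (Theorem \ref{theoremEvenlattice}) to conclude that the associated circulant graph has degree $10$, diameter at most $k$, and the stated order. The natural parameter here is $a=(2k+2)/5$, an integer exactly because $k\equiv 4\pmod 5$, and the five lattice generators $\textbf{v}_1,\dots,\textbf{v}_5$ are vectors whose coordinates are affine in $a$ with small $\pm1,\pm2$ perturbations, read off from generating set $1$ of Table \ref{table:104A} by the same reverse-engineering used in the earlier subsections, and arranged so that each $\textbf{v}_i$ lies strictly inside a distinct orthant with $\delta(\textbf{0},\textbf{v}_i)=5a-1=2k+1$.

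The algebraic half is then mechanical. I would display four explicit $\Z$-combinations $\sum_i c_{ij}\textbf{v}_i$ equal to vectors of the form $(N_j,0,\dots,-1,\dots,0)$ with the $-1$ in slot $j+1$, where each $N_j$ is a quartic in $a$; this shows at once that $\textbf{e}_1$ generates $\Z^5/L_k$ (so the quotient is cyclic) and that $\textbf{e}_{j+1}=N_j\textbf{e}_1$. Row-reducing $M=(\textbf{v}_1,\dots,\textbf{v}_5)^{\mathrm T}$ against these relations leaves an upper-left entry that is a quintic in $a$, which one checks equals $(512k^5+1280k^4+6400k^3+8640k^2+6890k+757)/3125$, so $\lvert\det M\rvert=L_{CC}(10,k)$. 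Feeding $a=(2k+2)/5$ into $\{1,N_1,N_2,N_3,N_4\}$ reproduces the generating set in the statement after clearing denominators.

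The substantive half is proving $S_{5,k}+L_k=\Z^5$, i.e.\ that every $\textbf{x}\in\Z^5$ is within $\ell^1$-distance $k$ of $L_k$. After disposing of the small case $k=4$ (and, if the large-$a$ argument requires it, also $k=9$) by a direct diameter check, I would adjoin auxiliary lattice vectors $\textbf{v}_6,\dots,\textbf{v}_{16}$, each an explicit integer combination of $\textbf{v}_1,\dots,\textbf{v}_5$, so that $\pm\textbf{v}_1,\dots,\pm\textbf{v}_{16}$ place one lattice point strictly inside each of the $32$ orthants with all coordinates of absolute value at most $a+1$ (handling the handful of vectors carrying one coordinate of absolute value $a+2$ with the sign-matching provisos of Theorem \ref{theorem:100A}). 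Subtracting suitable $\pm\textbf{v}_i$ reduces an arbitrary $\textbf{x}$ to one with all coordinates bounded by $a+1$; since $\delta(\textbf{0},\pm\textbf{v}_i)=2k+1$, whenever such a reduced $\textbf{x}$ lies ``between'' $\textbf{0}$ and one of the $\pm\textbf{v}_i$ it is automatically within distance $k$ of $\textbf{0}$ or of that lattice point.

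The real work, and the expected main obstacle, is the residual boundary analysis: for each orthant one must enumerate the reduced $\textbf{x}$ that saturate the bound $a+1$ in a coordinate with the ``wrong'' sign --- hence lie in the orthant of some $\textbf{v}_i$ but not between $\textbf{0}$ and $\textbf{v}_i$ --- and for every such family produce a short chain of lattice translations terminating in a vector that provably lies between $\textbf{0}$ and some $\pm\textbf{v}_j$. As in the $k\equiv 0\pmod 5$ case this is a long but finite case split (sixteen orthants by centrosymmetry, roughly a dozen sub-cases apiece with nested corner exceptions), checked by a tailored program; the paper would present only the orthant of $\textbf{v}_1$ in full and note that the rest is available on request. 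The delicate point is choosing $\textbf{v}_6,\dots,\textbf{v}_{16}$ so that these reduction chains always close --- once that is done, everything else is routine polynomial bookkeeping.
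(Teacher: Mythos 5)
Your proposal follows exactly the paper's route: the paper sets $a=(2k+2)/5$, exhibits five lattice generators realising generating set 1 of Table \ref{table:104A}, verifies cyclicity and the determinant $16a^5-16a^4+32a^3-16a^2+5a-1=L_{CC}(10,k)$ via four relations of the form $(N_j,0,\dots,-1,\dots,0)$, disposes of $k=4$ directly and assumes $k\ge 9$ so $a\ge 4$, adjoins $\textbf{v}_6,\dots,\textbf{v}_{16}$ covering the 32 orthants with $\delta(\textbf{0},\pm\textbf{v}_i)=5a-1=2k+1$ (only $\pm\textbf{v}_{13}$ carrying a coordinate of absolute value $a+2$), and defers the residual boundary case analysis. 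This matches the paper's proof in structure and in every substantive choice.
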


\begin{proof}
We will show the existence of five-dimensional lattices $L_k \subseteq \Z^5$ such that $\Z^5/L_k$ is cyclic, $S_{5,k}+L_k=\Z^5$, where $S_{5,k}$ is the set of points in $\Z^5$ at a distance of at most $k$ from the origin under the $\ell^1$ (Manhattan) metric, and $\vert \Z^5 : L_k\vert = L_{CC}(10,k) $ as specified in the theorem. Then, by Theorem \ref {theoremD}, the resultant Cayley graph has diameter at most $k$. 

Let $a= (2k+2)/5$, and let $L_k$ be defined by five generating vectors as follows:
\[
\begin{array}{rcl}
\textbf{v}_1&=&(a-1,a,-a,a,a)\\
\textbf{v}_2&=&(a,a-1,-a,a,-a)\\
\textbf{v}_3&=&(a,a,a,a-1,a)\\
\textbf{v}_4&=&(-a+1,-a,a+1,a-1,a)\\
\textbf{v}_5&=&(-a-1,a-1,-a+1,a+1,a-1)\\
\end{array}
\] 
Then the following vectors are in $L_k$:
\newline
$(2a^2-2a+1)\textbf{v}_1 + (2a+1)\textbf{v}_2 + 2a\textbf{v}_4 - 2a^2\textbf{v}_5 = (4a^3-2a^2+6a-1, -1, 0, 0, 0),$
\newline
$(4a^3-4a^2+2a)\textbf{v}_1 + (4a^2+2a)\textbf{v}_2 + \textbf{v}_3 +(4a^2-1)\textbf{v}_4 -4a^3\textbf{v}_5 = (8a^4-4a^3+12a^2-1, 0, -1, 0, 0),$
\newline
$(4a^3-6a^2+5a-2)\textbf{v}_1 + 4a^2\textbf{v}_2 + \textbf{v}_3 +(4a^2-2a)\textbf{v}_4 - (4a^3-2a^2+a)\textbf{v}_5 = (8a^4-8a^3+16a^2-7a+2, 0, 0, -1, 0),$
\newline
$(8a^3-12a^2+9a-4)\textbf{v}_1 + (8a^2-1)\textbf{v}_2 + 2\textbf{v}_3 +(8a^2-4a-1)\textbf{v}_4 - (8a^3-4a^2+a-1)\textbf{v}_5 = (16a^4-16a^3+30a^2-15a+2, 0, 0, 0, -1)$
\newline

Hence we have $\textbf{e}_2 =  (4a^3-2a^2+6a-1)\textbf{e}_1,  \textbf{e}_3 =  (8a^4-4a^3+12a^2-1)\textbf{e}_1, \textbf{e}_4 =  (8a^4-8a^3+16a^2-7a+2)\textbf{e}_1$,     and $\textbf{e}_5 =  (16a^4-16a^3+30a^2-15a+2)\textbf{e}_1$ in $\Z^5/L_k$, and so $\textbf{e}_1$ generates $\Z^5/L_k$.

Also det $\left ( \begin{array} {c} \textbf{v}_1 \\ \textbf{v}_2 \\ \textbf{v}_3 \\ \textbf{v}_4 \\ \textbf{v}_5 \end {array} \right ) = $ det
$\left (
\begin{array} {l r r r r}
16a^5-16a^4+32a^3-16a^2+5a-1 & 0 & 0 & 0 & 0 \\
4a^3-2a^2+6a-1 & -1 & 0 & 0 & 0 \\
8a^4-4a^3+12a^2-1 & 0 & -1 & 0 & 0 \\
8a^4-8a^3+16a^2-7a+2 & 0 & 0 & -1 & 0 \\
16a^4-16a^3+30a^2-15a+2 & 0 & 0 & 0 & -1  
\end {array} \right )$ 
\newline
\newline
$= 16a^5-16a^4+32a^3-16a^2+5a-1 = (512k^5+1280k^4+6400k^3+8640k^2+6890k+757)/3125= L_{CC}(10,k)$, as in the statement of the theorem. 

Thus $\Z^5/L_k$ is isomorphic to $\Z_{L_{CC}(10,k)}$ via an isomorphism taking $\textbf{e}_1, \textbf{e}_2, \textbf{e}_3, \textbf{e}_4, \textbf{e}_5$ respectively to $1, 4a^3-2a^2+6a-1, 8a^4-4a^3+12a^2-1,  8a^4-8a^3+16a^2-7a+2, 16a^4-16a^3+30a^2-15a+2 $. As $a=(2k+2)/5$ this gives the generating set specified in the theorem: $\{1,
(32k^3+56k^2+316k+167)/125,
(128k^4+192k^3+1408k^2+1002k+908)/625,
(128k^4+352k^3+1488k^2+2432k+543)/625,
(256k^4+384k^3+2616k^2+1354k+116)/625\}$.

It remains to show that $S_{5,k}+L_k=\Z^5$. For $k=4$, it is straightforward to show directly that $\Z_{457}$ with generators $1, 20, 130, 147, 191$ has diameter 4. So we assume $k\geq9$, so that $a \geq 4$. Now let
\[
\begin{array}{rll}
\textbf{v}_6&=-\textbf{v}_1+\textbf{v}_2+\textbf{v}_3&=(a+1,a-1,a,a-1,-a)\\
\textbf{v}_7&=\textbf{v}_1-\textbf{v}_2-\textbf{v}_4&=(a-2,a+1,-a-1,-a+1,a)\\
\textbf{v}_8&=\textbf{v}_1-\textbf{v}_2-\textbf{v}_5&=(a,-a+2,a-1,-a-1,a+1)\\
\textbf{v}_9&=\textbf{v}_1-\textbf{v}_3+\textbf{v}_4&=(a,a,a-1,-a,-a)\\
\textbf{v}_{10}&=\textbf{v}_1-\textbf{v}_3-\textbf{v}_5&=(a,-a+1,-a-1,-a,-a+1)\\
\textbf{v}_{11}&=\textbf{v}_1+\textbf{v}_4-\textbf{v}_5&=(a+1,-a+1,a,a-2,a+1)\\
\textbf{v}_{12}&=-\textbf{v}_2+\textbf{v}_3-\textbf{v}_4&=(a-1,a+1,a-1,-a,a)\\
\textbf{v}_{13}&=\textbf{v}_2+\textbf{v}_4-\textbf{v}_5&=(a+2,-a,a,a-2,-a+1)\\
\textbf{v}_{14}&=\textbf{v}_2-\textbf{v}_5+\textbf{v}_9&=(a+1,-a,-a,a-1,-a+1)\\
\textbf{v}_{15}&=\textbf{v}_1-\textbf{v}_5+\textbf{v}_9&=(a,-a+1,-a,a-1,a+1)\\
\textbf{v}_{16}&=\textbf{v}_1-\textbf{v}_3+\textbf{v}_8&=(a-1,-a+2,-a-1,-a,a+1).
\end{array}
\]
Then the 32 vectors $\pm\textbf{v}_i$ for $i=1,...,16$ provide one element of $L_k$ lying strictly within each of the 32 orthants of $\Z^5$. Most of the coordinates of these vectors have absolute value at most $a+1$. Only $\pm \textbf{v}_{13}$ have one coordinate with absolute value equal to $a+2$.
As in the proof of Theorem \ref {theorem:100A} any vector $\textbf{x}$ is reduced to a vector $\textbf{x}'$ with coordinates with absolute value at most $3a$ by successively subtracting appropriate vectors $\textbf{v}=\pm \textbf{v}_i$.
If a coordinate of $\textbf{x}$ is 0 then either sign is allowed for $\textbf{v}$ as long as the corresponding coordinate of $\textbf{v}$ has absolute value $\leq a+1$. So if the $\textbf{e}_1$ coordinate of $\textbf{x}$ is 0 then we avoid $\textbf{v}_{13}$ and take $-\textbf{v}_7$ instead.

If the resulting vector $\textbf{x}'$ lies between $\textbf{0}$ and $\textbf{v}$, where $\textbf{v}=\pm\textbf{v}_i$ for some $i$, then we have $\delta(\textbf{0},\textbf{x}')+\delta(\textbf{x}',\textbf{v})=\delta(\textbf{0},\textbf{v})$. All of the vectors $\textbf{v}$ satisfy $\delta(\textbf{0},\textbf{v})=5a-1$, so we have $\delta(\textbf{0},\textbf{v})=2k+1$. Hence $\textbf{x}' \in S_{5,k}+L_k$, so that $\textbf{x} \in S_{5,k}+L_k$ as required. 
Now we are left with the case where the absolute value of each coordinate of the reduced $\textbf{x}$ is at most $a+1$, and $\textbf{x}$ is in the orthant of $\textbf{v}$, where $\textbf{v} = \pm \textbf{v}_i$ for some $i \leq 16$ but does not lie between $\textbf{0}$ and $\textbf{v}$.
\end{proof}


\section {Conclusion}

As we have seen, the formulae for the order of the largest-known degree 10 and 11 circulant graphs are quintic polynomials in the diameter $k$ which depend on the value of $k \pmod 5$. This divides the solution space into five families for degree 10 and another five for degree 11. For each of these 10 families there is a unique isomorphism class with the exception of degree 11, diameter $k \equiv 1 \pmod 5$ for which there are two. For eight of these eleven cases there is also at least one family of primitive generating sets which is valid for every diameter, so that it repeats every 5. The other families require at least two sets or subsets to cover all diameters. For example the degree 10 familiy for diameter 2 (mod 5) requires both generating sets 1 and 2, and degree 11 family 1a requires all five subsets of generating set 1. Table \ref {table:gensets} summarises the primitive generating set families for each isomorphism class of graph families. Generating set families that are valid for every diameter of the class are represented by 1/5 where $c/b$ means that there are $c$ sets of formulae each valid for a particular $k \pmod b$.

\begin {table} [!htbp]
\small
\caption{Characterisation of primitive generating sets for each isomorphism family.} 
\centering 
\setlength {\tabcolsep} {11pt}
\begin{tabular}{ @ { } c c c c c c c } 
\noalign {\vskip 2mm} 
\hline\hline 
\noalign {\vskip 1mm} 
Isomorphism & Generating sets & \multicolumn {5} {l} {Characterisation of generating set*}  \\ 
family & per diameter & Set 1 & Set 2 & Set 3 & Set 4 & Set 5   \\ 
\hline 
\noalign {\vskip 1mm} 
\multicolumn {6} {l} {Degree 10}  \\ 
0 &   3 - 5 & 1/5 & 1/5 & 3/15 & 4/25 & 16/85  \\
1 & 1 & 1/5  \\
2 & 1 - 2 & 2/15 & 2/15  \\
3 & 1 & 1/5  \\
4 & 3 - 5 & 1/5 & 1/5 & 3/15 & 4/25 & 16/85 \\
\hline
\noalign {\vskip 1mm} 
\multicolumn {6} {l} {Degree 11}  \\ 
0 &   1 - 2 & 2/15 & 2/15   \\
1a & 1 - 4 & 5/25 & 4/25 & 12/65 & 16/85  \\
1b & 2 - 4 & 1/5 & 1/5 & 6/35 & 12/65  \\
2 & 3 - 5 & 1/5 & 1/5 & 3/15 & 4/25 & 16/85  \\
3 & 3 - 5 & 1/5 & 1/5 & 3/15 & 4/25 & 16/85  \\
4 & 2 - 5 & 1/5 & 1/5 & 16/85 & 22/115 & 46/235 \\
\hline
\noalign {\vskip 1mm} 
\multicolumn {7} {l} {\footnotesize * $c/b$: a generating set with $c$ sets of formulae each valid for a particular diameter $k\pmod b$} 
\end{tabular}
\label{table:gensets}
\end{table}

These families of undirected circulant graphs of dimension 5, degrees 10 and 11, are conjectured to be extremal for all diameters above a threshold. The existence of the degree 10 graphs has been proved for all diameters, and the existence of the degree 11 graphs may be proved similarly. They are consistent with Conjecture \ref {conjecture:ext}, that for any degree $d \ge 2$ and corresponding dimension $f$, the first two coefficients of the polynomial formula in the diameter $k$ for the order of an extremal family of Abelian Cayley or circulant graphs are a multiple $R_f= 2^{f-1}(f!/f^f)$ of the respective coefficients of the Abelian Cayley graph upper bound $M_{AC}(d,k)$.

If the conjecture is valid, then for dimension 6 and 7 this would give the following formulae for extremal circulant graph order for degree $d$ and arbitrary diameter $k$ above some threshold value:
\[ CC(d,k) =
\begin {cases}
(32k^6+96k^5)/729 \ + O(k^4) & \mbox { for } d=12 \\
\ 64k^6/729 \qquad \qquad + O(k^4) & \mbox { for } d=13 \\
(8192k^7+28672k^6)/823543 + O(k^5) & \mbox { for } d=14 \\
16384k^7/823543 \qquad \qquad \quad + O(k^5) & \mbox { for } d=15 \\
\end {cases}
\]

It remains to be confirmed that any such graph families exist. Establishing the extremal Abelian Cayley and circulant graph order conjecture, Conjecture \ref{conjecture:ext}, as either a lower bound or an upper bound would also represent an important advance in this field.



\newpage

\section{Appendix: Additional tables of generating sets}


Degree 10, diameter $k\equiv 0$

\begin {table} [!htbp]
\small
\caption{Generating set 5 for degree 10 graphs of class 0: diameter $k\equiv 0 \pmod 5$, part one.} 
\centering 
\setlength {\tabcolsep} {5pt}

\label{table:114D}
\end{table}

\end{document}